\newcommand\Z{{\mathbb Z}}
\newcommand\Zpos{\Z_{\ge1}}
\newcommand\Znn{\Z_{\ge0}}
\newcommand\Q{{\mathbb Q}}
\newcommand\Qx{\Q^\times}
\newcommand\Qnn{\Q_{\ge0}}
\newcommand\R{{\mathbb R}}
\newcommand\Rnn{\R_{\ge0}}
\newcommand\C{{\mathbb C}}
\newcommand\lset{\{\,}
\newcommand\rset{\,\}}
\newcommand\lra{\longrightarrow}
\newcommand\bs{\backslash}
\newcommand\inv{^{-1}}
\newcommand\tr{\operatorname{tr}}
\newcommand\eg{{\rm e.g.}}
\newcommand\ie{{\rm i.e.}}
\newcommand\cf{{\rm cf.}}
\newcommand\ip[2]{\langle #1,#2\rangle} 
\newcommand\siX[1]{{\mathcal X}_{#1}} 
\newcommand\Xtwo{\siX2}
\newcommand\XtwoN{\siX2(N)}
\newcommand\XtwoNsemi{\Xtwo^{\rm semi}(N)}
\newcommand\SLgp{\operatorname{SL}}
\newcommand\SL[2]{\SLgp_{#1}(#2)}
\newcommand\SLtwoZ{\SL2\Z}
\newcommand\SLtwoQ{\SL2\Q}
\newcommand\GLgp{\operatorname{GL}}
\newcommand\GL[2]{\GLgp_{#1}(#2)}
\newcommand\GLtwoZ{\GL2\Z}
\newcommand\GLtwoQ{\GL2\Q}
\newcommand\GLtwoR{\GL2\R}
\newcommand\Spgp{\operatorname{Sp}}
\newcommand\Sp[2]{\operatorname{Sp}_{#1}(#2)}
\newcommand\SptwoZ{\Sp2\Z}
\newcommand\SptwoQ{\Sp2\Q}
\newcommand\SptwoR{\Sp2\R}
\newcommand\Gamzero{\Gamma_{\!0}}
\newcommand\GzN{\Gamzero(N)}
\newcommand\Ptwoone{\operatorname{P}_{2,1}}
\newcommand\PtwooneQ{\Ptwoone(\Q)}
\newcommand\PtwooneZ{\Ptwoone(\Z)}
\newcommand\rmat[4]{\left[\begin{array}{rr}
{#1}&{#2}\\{#3}&{#4}\end{array}\right]}
\newcommand\cmat[4]{\left[\begin{array}{cc}
{#1}&{#2}\\{#3}&{#4}\end{array}\right]}
\newcommand\smallmat[4]{\left[\begin{smallmatrix}
{#1}&{#2}\\{#3}&{#4}\end{smallmatrix}\right]}
\newcommand\smallmatabcd{\smallmat abcd}
\newcommand\matoplus{\boxplus}
\newcommand\paramodulargroup[1]{{\rm K}(#1)}
\newcommand\KN{\paramodulargroup N}
\newcommand\pvar{{\cmat\tau zz\omega}} 
\newcommand\smallpvar{{\smallmat\tau zz\omega}}
\newcommand\smallpindN{{\smallmat n{r/2}{r/2}{mN}}}
\newcommand\smallpindnN{{\smallmat m{r/2}{r/2}{nN}}}
\newcommand\fcJ[2]{c(#1;#2)} 
\newcommand\GampmsupzeroN{\Gamma_\pm^0(N)}
\newcommand\UHP{{\mathcal H}} 
\newcommand\UHPtwo{{\UHP_2}}
\newcommand\Matsplus[2]{{\rm M}^+_{#1}(#2)}
\newcommand\MatsplustwoZ{\Matsplus2\Z}
\newcommand\wtvar[2]{[#1]_{#2}}
\newcommand\wtk[1]{\wtvar{#1}k}
\newcommand\fc[2]{a(#1;#2)} 
\newcommand\e{{\rm e}} 
\newcommand\MFsnoweight{{\mathcal M}} 
\newcommand\MFswtgp[2]{\MFsnoweight_{#1}(#2)}
\newcommand\MFs[1]{\MFswtgp k{#1}} 
\newcommand\CFsnoweight{{\mathcal S}} 
\newcommand\CFswtgp[2]{\CFsnoweight_{#1}(#2)}
\newcommand\CFs[1]{\CFswtgp k{#1}} 
\newcommand\JFind{m} 
\newcommand\BPc{c} 
\newcommand\BPA{A} 
\newcommand\MkKN{\MFswtgp k\KN}
\newcommand\MwtKlevel[2]{\MFswtgp{#1}{\paramodulargroup{#2}}}
\newcommand\SkKN{\CFswtgp k\KN}
\newcommand\StwoKN{\CFswtgp2\KN}
\newcommand\StwoKlevel[1]{\CFswtgp2{\paramodulargroup{#1}}}
\newcommand\SwtKlevel[2]{\CFswtgp{#1}{\paramodulargroup{#2}}}
\newcommand\Jac[2]{{\rm J}_{#1,#2}}
\newcommand\Jzerom{\Jac0m}
\newcommand\Jackm{\Jac km}
\newcommand\Jcusp[2]{{\rm J}_{#1,#2}^{\rm cusp}}
\newcommand\Jweak[2]{{\rm J}_{#1,#2}^{\rm weak}}
\newcommand\Jkmcusp{\Jcusp km}
\newcommand\JkNcusp{\Jcusp kN}
\newcommand\JkmNcusp{\Jcusp k{mN}}
\newcommand\JkBPcNcusp{\Jcusp k{\BPc N}}
\newcommand\Jwh[2]{{\rm J}_{#1,#2}^{\rm w.h.}}
\newcommand\JzeroNwh{\Jwh0N}
\newcommand\Grit{{\rm Grit}}
\newcommand\mymod{\text{ mod }}
\renewcommand{\Im}[1]{{\rm Im}(#1)}
\newcommand\TB{{\rm TB}}
\newcommand\BL{{\rm Borch}}
\newcommand\JFtoSMF[2]{E_{#2}{#1}}
\newcommand\JFtoSMFm[1]{\JFtoSMF{#1}m}
\newcommand\nmin{{n_{\rm min}}}
\newcommand\nmax{{n_{\rm max}}}
\newcommand\nextra{{n_{\rm extra}}}
\newcommand\LaurentZ{\Z[\zeta,\zeta\inv]}
\newcommand\LaurentQ{\Q[\zeta,\zeta\inv]}
\newcommand\LaurentC{\C[\zeta,\zeta\inv]}
\newcommand\bigOh[1]{{\mathcal O}(#1)}
\newcommand\ord{\operatorname{ord}}
\newcommand\divv{\operatorname{div}}
\newcommand\Adj{\operatorname{Adj}}
\newcommand\atom[1]{\vartheta^*_{#1}}
\newcommand\iotaone{\operatorname{\iota_1}}
\newcommand\iotaonesigma{\iota_1\sigma}
\newcommand\iotaoneAdjsigma{\iota_1{\rm Adj}(\sigma)}
\newcommand\noway{\rho}
\newcommand\ftss{f_{277}}
\theoremstyle{plain}
\newtheorem{theorem}{Theorem}[section]
\newtheorem{lemma}[theorem]{Lemma}
\newtheorem{proposition}[theorem]{Proposition}
\newtheorem{corollary}[theorem]{Corollary}
\theoremstyle{definition}
\newtheorem{definition}[theorem]{Definition}
\numberwithin{equation}{section}
\begin{document}
\title[Finding all Borcherds products]
{Finding all Borcherds product paramodular cusp forms of a given
weight and level}

\author[C.~Poor]{Cris Poor}
\address{Department of Mathematics, Fordham University, Bronx, NY 10458 USA}
\email{poor@fordham.edu}

\author[J.~Shurman]{Jerry Shurman}
\address{Department of Mathematics, Reed College, Portland, OR 97202 USA}
\email{jerry@reed.edu}

\author[D.~Yuen]{David S.~Yuen}
\address{Department of Mathematics, University of Hawaii, Honolulu, HI 96822 USA}
\email{yuen888@hawaii.edu}

\subjclass[2010]{Primary: 11F46; secondary: 11F55,11F30,11F50}
\date{\today}

\begin{abstract}
We present an algorithm to compute all Borcherds product para\-modular
cusp forms of a specified weight and level, describing its
implementation in some detail and giving examples of its use.
\end{abstract}

\keywords{Paramodular cusp form, Borcherds product}

\maketitle


\section{Introduction\label{sectionIntr}}

Borcherds products are a rich source of paramodular forms.
To begin with an example, consider the weight~$2$, level~$277$ nonlift
new paramodular cusp eigenform $\ftss \in \SwtKlevel2{277}$.
Here {\em nonlift\/} means {\em not a Gritsenko lift\/}.
This paramodular form, predicted by the paramodular conjecture of
A.~Brumer and K.~Kramer \cite{brkr14,brkr18}, shows the modularity of the
unique isogeny class of abelian surfaces~$A/\Q$ with conductor~$277$.
It was constructed in \cite{py15} as a quotient of polynomials in
Gritsenko lifts, with the proof that the quotient is holomorphic
requiring extensive computation in $\SwtKlevel8{277}$.
Our Borcherds product algorithm, the subject of this article,
produces an elegant alternative construction of~$\ftss$ as the sum of
a Borcherds product and a Gritsenko lift, which are constructed in
turn from Jacobi forms $\phi_1,\dotsc,\phi_9$ described below.
With $V_{\ell}:\Jackm\to \Jac{k}{m\ell}$ the level raising
operators of Eichler--Zagier \cite{ez85}, the first three Jacobi forms
define a weakly holomorphic Jacobi form~$\psi$ of weight~$0$ and index~$277$,
$$
\psi=-\dfrac{\phi_1|V_2}{\phi_1}-\dfrac{\phi_2|V_2}{\phi_2}
+\dfrac{\phi_3|V_2}{\phi_3}, \qquad
\psi(\tau,z)=\sum_{n,r\in\Z}c(n,r)q^n\zeta^r.
$$
Here $q=e^{2\pi i\tau}$ and $\zeta=e^{2\pi iz}$.
The Borcherds product constructed from~$\psi$ combines with a
Gritsenko lift constructed from the other six Jacobi forms
to give the nonlift new eigenform,
\begin{align*}
\ftss(\smallpvar)&=15\,q\,\zeta^{28}\xi^{277}
\prod_{ (m,n,r) \ge0}(1-q^n\zeta^r\xi^{mN})^{c(nm,r)} \\
&\quad+\Grit(-12\phi_4+2\phi_5+\phi_6+2\phi_7-2\phi_8-4\phi_9)(\smallpvar).
\end{align*}
Here $\xi=e^{2\pi i\omega}$.
The product is taken over $m,n,r\in\Z$ such that $m\ge0$, and if
$m=0$ then $n\ge0$, and if $m=n=0$ then $r<0$.
The nine Jacobi forms~$\phi_i$ in the construction of~$\ftss$ are
given as {\em theta blocks\/}, very useful functions due to
V.~Gritsenko, N.-P.~Skoruppa, and D.~Zagier \cite{gsz}.
Specifically, let $\eta(\tau)$ and $\vartheta(\tau,z)$ denote the
Dedekind eta function and the odd Jacobi theta function, and set
$\vartheta_\ell(\tau,z)=\vartheta(\tau,\ell z)$ for~$\ell\ge1$.
Then, with $0^e$ and $\ell^e$ abbreviating $\eta^e$
and~$(\vartheta_\ell/\eta)^e$,
\begin{alignat*}2
\phi_1 &=0^4 1^2 2^2 3^2 4^1 5^1 14^1 17^1
&\qquad\phi_2 &=0^4 1^1 3^1 4^2 5^1 6^1 8^1 9^2 15^1\\
\phi_3 &=0^4 1^1 2^1 3^1 4^2 5^1 7^1 8^1 9^1 17^1
&\qquad\phi_4 &=0^4 1^2 2^1 3^2 4^1 5^1 6^1 7^{-1} 9^1 14^1 15^1\\
\phi_5 &=0^4 1^2 2^3 3^1 4^1 11^1 13^1 15^1
&\qquad\phi_6 &=0^4 1^1 2^1 3^2 4^1 5^1 6^1 7^1 9^1 18^1\\
\phi_7 &=0^4 1^1 2^1 3^1 4^1 6^1 7^2 10^1 11^1 13^1
&\qquad\phi_8 &=0^4 1^1 3^2 4^1 7^1 8^2 10^1 11^2\\
\phi_9 &=0^4 1^2 2^1 3^1 4^1 6^{-1} 7^1 8^1 9^1 10^1 11^1 12^1.
\end{alignat*}
For instance, $\phi_4(\tau,z)=
\vartheta(\tau,z)^2\vartheta(\tau,3z)^2
\prod_{\ell\in\{2,4,5,6,9,14,15\}}\vartheta(\tau,\ell z)/
(\eta(\tau)^6\,\vartheta(\tau,7z))$.
See \cite{yps17b} for a full description of this example, 

In recent work \cite{psysqfree}, to study the spaces $\StwoKN$ of
weight~$2$ paramodular cusp forms of squarefree composite levels
$N<300$, we needed Borcherds products to help span the weight~$4$ spaces
for those levels, and we needed Borcherds products to construct the
weight~$2$ nonlift newforms of levels $N=249,295$ that were
predicted by the paramodular conjecture.
Using Borcherds products, we currently \cite{pyinflation} are carrying
out similar constructions for the prime levels $N<600$ where the
paramodular conjecture predicts nonlift newforms and prior work
has provided evidence that they exist \cite{py15}.
Work that the first and third authors of this article are preparing
with R.~Schmidt \cite{psysuper} uses Borcherds products to 
construct paramodular forms whose automorphic representations
have supercuspidal components.
As another example of the utility of having all Borcherds product cusp forms,
knowing that every Borcherds product in $\StwoKlevel{461}$ is also a Gritsenko
lift tells us that constructing the nonlift that the paramodular
conjecture predicts for this space requires other means.
With other applications for Borcherds products in mind as well, we
have created a tool to systematically construct all Borcherds product
paramodular cusp forms of a given weight and level.
This article describes the Borcherds product construction and its
implementation, and also the mathematical issues that arose in their context.
The method can produce Borcherds product paramodular noncusp forms as
well, but it needn't do so exhaustively.
The theory of Borcherds products for paramodular forms is given by
V.~Gritsenko and V.~Nikulin in~\cite{gn98}.

An entwining of theory, algorithm design, and experiment is required
to produce our mathematically rigorous method to find all Borcherds
products of a fixed weight and level in a practical amount of time.
Let a weight~$k$ and a level~$N$ be given,
both positive integers.  Let $\tau\in\UHP$ be a variable in the
complex upper half plane and $z\in\C$ a complex variable.  One part of our
algorithm produces all the Laurent polynomials
$$
\tilde\psi(\tau,z)=\sum_{n=\nmin}^{N/4+\nextra}\psi_n(\zeta)q^n
\in \LaurentQ[q,q\inv],
\quad q=e^{2\pi i\tau},\ \zeta=e^{2\pi iz},
$$
that determiningly truncate actual weight~$0$, index~$N$ weakly
holomorphic Jacobi forms,
with integral Fourier coefficients on singular indices,
whose resulting Borcherds products lie in the space of
paramodular cusp forms having the given weight and level,
$$
\psi(\tau,z)=\sum_{n=\nmin}^\infty\psi_n(\zeta)q^n\in\JzeroNwh,
\qquad
\BL(\psi)\in\SkKN.
$$
The polynomial-lengthening $\nextra$ in the penultimate display lets
the algorithm better avoid generating non-truncation Laurent
polynomials, which must be detected and discarded.  Also the longer
truncations let us check the cuspidality of the Borcherds products that
the algorithm produces, and compute more of their Fourier
coefficients.
A run of the algorithm finds those cuspidal Borcherds products of
weight~$k$ and level~$N$ that are further specified by two parameters $c$
and~$t$, which fix exponents in the variables~$\xi$ and $q$, 
$$
\BL(\psi)(\Omega)=q^{c+t}b(\zeta)(1-G(\zeta)q+\cdots)
\xi^{\BPc N}\exp(-\Grit(\psi)(\Omega)).
$$
Here the variable $\Omega=\smallpvar$ lies in the Siegel upper half
space~$\UHPtwo$, and $\xi=e^{2\pi i\omega}$.  The leading theta block
of the Borcherds product is $q^{c+t}b(\zeta)(1-G(\zeta)q+\cdots)$,
a Jacobi cusp form of weight~$k$ and index~$\BPc N$ denoted
$\phi\in\JkBPcNcusp$, and $\Grit$ denotes the Gritsenko lift,
$\Grit(\psi)(\Omega)=\sum_{m\ge1}(\psi|V_m)(\tau,z)\xi^{mN}$ with
each~$V_m$ an index-raising operator, so that
$\exp(-\Grit(\psi)(\Omega))=1-\psi(\tau,z)\xi^N+\cdots$.
The first nonzero Fourier--Jacobi coefficient of $\BL(\psi)$ is
$\phi$, and the next Fourier Jacobi coefficient is $-\phi\psi$,
an element of~$\Jcusp k{(\BPc+1)N}$.
This tells us to seek the source-form~$\psi$ of the Borcherds
product as a quotient $g/\phi$ with $g\in\Jcusp k{(\BPc+1)N}$
and $\phi\in\JkBPcNcusp$ a theta block having $q$-order $c+t$.
We know from~\cite{gsz} how to find all such theta blocks~$\phi$.
Our algorithm creates Laurent polynomial truncations~$\tilde g$ of
putative Jacobi cusp forms~$g$.  When these truncations are
long enough, the algorithm will generate only those Laurent
polynomial truncations~$\tilde\psi$ of~$\tilde g/\tilde\phi$ that determiningly
truncate actual forms~$\psi\in\JzeroNwh$; Theorem~\ref{divboundcor}
shows how to compute a sufficient length.  However, guaranteed-long-enough
truncations $\tilde g$ can be prohibitive computationally, so we use
shorter truncations, with the possibility of generating some extra 
polynomials $\tilde\psi$ that don't truncate actual forms~$\psi$.
Elementary theory of weakly holomorphic Jacobi forms lets us check
whether a candidate~$\tilde\psi$ really does truncate some $\psi$
in~$\JzeroNwh$, as shown in Proposition~\ref{testLaurentTrunc}.
In practice we tune the algorithm, aiming for long enough truncations
to avoid generating false~$\tilde\psi$ but using shorter truncations
than the guaranteeing length from the theory.
Theorem~\ref{cuspinesstheorem} gives a guaranteeing truncation length to
determine whether the paramodular form Borcherds products found by the
algorithm are cusp forms.  This length can be considerably greater than
necessary for the rest of the algorithm, leading to a second run.

Sections~\ref{sectionPara}, \ref{sectionJaco}, and~\ref{sectionThet}
give background on paramodular forms, Jacobi forms, and theta blocks.
Section~\ref{sectionBorc} quotes a version of the Gritsenko--Nikulin
theorem that gives conditions for a Borcherds product to be a
paramodular form, and it shows that only finitely many holomorphic
Borcherds products $\BL(\psi)$ can have a given leading theta block~$\phi$.
Section~\ref{sectionDivi} gives a sufficient truncation length to
prevent our algorithm from generating false candidates~$\tilde\psi$.
Section~\ref{sectionConf} gives an algorithm to test whether a
candidate $\tilde\psi$ truncates an actual element of~$\JzeroNwh$.
Section~\ref{sectionCusp} gives a sufficient truncation length to
determine whether the Borcherds products found by the algorithm are cusp forms.
Section~\ref{sectionBLAlgorithm} presents the algorithm to find all
Borcherds products.
Section~\ref{sectionExam} gives examples of using the algorithm.

\section{Paramodular forms\label{sectionPara}}

\subsection{Definitions\label{sectionParaDefs}}

We introduce notation and terminology for paramodular forms.
The degree~$2$ symplectic group $\Spgp(2)$ of $4\times4$ matrices is
defined by the condition $g'Jg=J$, where the prime denotes matrix
transpose and $J$ is the skew form $\smallmat0{-1}1{\phantom{-}0}$
with each block $2\times2$.  The map
$\iota:\SLgp(2)\times\SLgp(2)\lra\Spgp(2)$ given by
$$
\cmat{a_1}{b_1}{c_1}{d_1}\times\cmat{a_2}{b_2}{c_2}{d_2}
\longmapsto
\left[\begin{array}{cc|cc}
a_1 & 0 & b_1 & 0 \\ 0 & a_2 & 0 & b_2 \\
\hline
c_1 & 0 & d_1 & 0 \\ 0 & c_2 & 0 & d_2
\end{array}\right]
$$
is a homomorphism.
The Klingen parabolic subgroup of~$\Spgp(2)$ is
$$
\Ptwoone=\lset
\left[\begin{array}{cc|cc}
* & 0 & * & * \\ * & * & * & * \\
\hline
* & 0 & * & * \\ 0 & 0 & 0 & *
\end{array}\right]\rset,
$$
with either line of three zeros forcing the remaining two
in consequence of the matrices being symplectic.
The map $\iota_1:\SLgp(2)\lra\Ptwoone$ is the restriction
of~$\iota$ that takes each $g\in\SLgp(2)$ to~$\iota(g,1_2)$.
For any positive integer~$N$, the paramodular group $\KN$ of degree~$2$
and level~$N$ is the group of rational symplectic matrices that
stabilize the column vector lattice $\Z\oplus\Z\oplus\Z\oplus N\Z$.  In
coordinates,
$$
\KN=\lset
\left[\begin{array}{cc|cc}
* & *N & * & * \\ * & * & * & */N \\
\hline
* & *N & * & * \\ *N & *N & *N & *
\end{array}\right]\in\SptwoQ:\text{all $*$ entries integral}
\rset.
$$
Here the upper right entries of the four subblocks are ``more integral
by a factor of~$N$'' than implied immediately by the definition of the
paramodular group as a lattice stabilizer, but
the extra conditions hold because the matrices are symplectic.

Let $\UHPtwo$ denote the Siegel upper half space of $2\times2$
symmetric complex matrices that have positive definite imaginary part,
generalizing the complex upper half plane~$\UHP$.
Elements of this space are written
$$
\Omega=\pvar\in\UHPtwo,
$$
with $\tau,\omega\in\UHP$, $z\in\C$, and $\Im\Omega>0$.
Also, letting $\e(w)=e^{2\pi iw}$ for~$w\in\C$, our standard notation is
$$
q=\e(\tau),\quad\zeta=\e(z),\quad\xi=\e(\omega).
$$
The real symplectic group $\SptwoR$ acts on~$\UHPtwo$ as fractional
linear transformations, $g(\Omega)=(a\Omega+b)(c\Omega+d)\inv$ for
$g=\smallmatabcd$, and the Siegel factor of automorphy is
$j(g,\Omega)=\det(c\Omega+d)$.  Fix an integer~$k$.
Any function $f:\UHPtwo\lra\C$ and any real symplectic matrix
$g\in\SptwoR$ combine to form another such function through the
weight~$k$ operator, $f\wtk g(\Omega)=j(g,\Omega)^{-k}f(g(\Omega))$.
A paramodular form of weight~$k$ and level~$N$ is a holomorphic
function $f:\UHPtwo\lra\C$ that is $\wtk{\KN}$-invariant; the K\"ocher
Principle says that for any positive $2\times2$ real matrix~$Y_o$, the
function $f\wtk g$ is bounded on $\lset\Im \Omega>Y_o\rset$ for all
$g\in\SptwoQ$.  The space of weight~$k$, level~$N$ paramodular
forms is denoted~$\MkKN$.

The Witt map~$W$ takes each pair $(\tau,\omega)$ in $\UHP\times\UHP$
to the matrix $\smallmat\tau00\omega$ in~$\UHPtwo$,
and its pullback~$W^*$ takes each function~$f$ on~$\UHPtwo$ to the function
$(W^*\!f)(\tau,\omega)=f(\smallmat\tau00\omega)$ on~$\UHP\times\UHP$.
Especially, $W^*$ takes $\MkKN$ to
$\MFs\SLtwoZ\otimes\MFs\SLtwoZ\wtk{\smallmat N001}$,
with $\MFs\SLtwoZ$ the space of weight~$k$, level~$1$ elliptic modular
forms.  Siegel's $\Phi$ map takes any holomorphic function that has a
Fourier series of the form
$f(\Omega)=\sum_t\fc tf\,\e(\ip t\Omega)$, summing over rational
positive semidefinite $2\times2$ matrices~$t$, to the function $(\Phi
f)(\tau)=\lim_{\omega\to i\infty}(W^*\!f)(\tau,\omega)$.
A paramodular form $f$ in~$\MkKN$ is a cusp form if $\Phi(f\wtk g)=0$
for all~$g\in\SptwoQ$; the space of such cusp forms is denoted~$\SkKN$.

\subsection{Symmetric and antisymmetric forms\label{sectionSymm}}

A paramodular form of level $N$ has a Fourier expansion
$$
f(\Omega)=\sum_{t\in\XtwoNsemi}\fc tf\,\e(\ip t\Omega)
$$
where
$\XtwoNsemi=\lset\smallpindN:n,m\in\Znn,r\in\Z,4nmN-r^2\ge0\rset$
and $\ip t\Omega=\tr(t\Omega)$.  A paramodular form is a cusp
form if and only if its Fourier expansion is supported on $\XtwoN$,
defined by the strict inequality $4nmN-r^2>0$; this characterization
of cusp forms does not hold in general for groups commensurable
with~$\SptwoZ$, but it does hold for~$\KN$.
Consider any $\SptwoR$ matrix of the form
$g=\alpha\matoplus\alpha^*=\smallmat\alpha00{\alpha^*}$
with $\alpha\in\GLtwoR$, where the superscript asterisk denotes matrix
inverse-transpose.  Introduce the notation $t[u]=u'tu$ for compatibly
sized matrices $t$ and~$u$.  Then we have $f\wtk g(\Omega)=(\det\alpha)^k
\sum_{t\in\XtwoNsemi[\alpha]}\fc{t[\alpha\inv]}f\,\e(\ip t\Omega)$
for any paramodular form~$f$; especially, if $g$ normalizes~$\KN$,
so that $f\wtk g$ is again a paramodular form, then
$\fc t{f\wtk g}=(\det\alpha)^k\fc{t[\alpha\inv]}f$ for~$t\in\XtwoNsemi$.
Let $\GampmsupzeroN$ denote the subgroup of~$\GLtwoZ$ defined by the
condition $b=0\mymod N$, where elements are denoted $\smallmatabcd$.
For $\alpha\in\GampmsupzeroN$, the matrix $g=\alpha\inv\matoplus\alpha'$
lies in~$\KN$ and we get
$\fc{t[\alpha]}f=(\det\alpha)^k\fc tf$ for~$t\in\XtwoNsemi$.

The elliptic Fricke involution,
$\alpha_N=\tfrac1{\sqrt N}\smallmat0{-1}N{\phantom{-}0}:
\tau\mapsto-\tfrac1{N\tau}$,
normalizes the level~$N$ Hecke subgroup $\GzN$ of~$\SLtwoZ$, and it
squares to~$-1$ as a matrix, hence to the identity as a transformation.
The corresponding paramodular Fricke involution is
$\mu_N=\alpha_N^*\matoplus\alpha_N:
\smallpvar\longmapsto\smallmat{\omega N}{-z}{-z}{\tau/N}$.
Thus $\fc t{f\wtk{\mu_N}}=\fc{t[\alpha_N']}f$ for any $f$ in~$\MkKN$.
The paramodular Fricke involution normalizes the paramodular group
$\KN$ and squares to~$1$ as a transformation.
The space $\CFs\KN$ decomposes as the direct sum of the Fricke eigenspaces
for the two eigenvalues~$\pm1$, $\SkKN=\SkKN^+\oplus\SkKN^-$.
We let $\epsilon$ denote either eigenvalue.

The Fourier coefficients of a paramodular Fricke eigenform
$f\in\MkKN^\epsilon$ satisfy the condition $\fc\smallpindnN f
=\epsilon\,\fc{\smallmat{\phantom{-}n}{-r/2}{-r/2}{\phantom{-}mN}}f$,
and because $\beta\matoplus\beta\in\KN$ where
$\beta=\left[\begin{smallmatrix}-1&0\\\phantom{-}0&1\end{smallmatrix}\right]$,
so that $f\wtk{\beta\matoplus\beta}=f$, they also satisfy the condition
$\fc{\smallmat{\phantom{-}n}{-r/2}{-r/2}{\phantom{-}mN}}f
=(-1)^k\fc\smallpindN f$.
The two conditions combine to give the {\em involution conditions\/}
on the Fourier coefficients,
$$
\fc\smallpindnN f=(-1)^k\epsilon\,\fc\smallpindN f.
$$
Decompose~$f$ as a $(q,\xi)$-Fourier series whose $(n,m)$-co\-ef\-fi\-cients
are functions of~$z$,
$$
f(\Omega)=\sum_{n,m}F_{n,m}(\zeta)q^n\xi^{mN},
\qquad
F_{n,m}(\zeta)=\sum_r\fc\smallpindN f\zeta^r.
$$
The involution conditions become $F_{m,n}=(-1)^k\epsilon\,F_{n,m}$ on
the $(n,m)$-coefficients.
A paramodular Fricke eigenform is {\em symmetric\/} if $(-1)^k\epsilon=+1$,
and {\em antisymmetric\/} if $(-1)^k\epsilon=-1$.
For an antisymmetric form, the involution conditions imply that
$\fc{\smallmat n{r/2}{r/2}{nN}}f=0$ for all $n$ and~$r$,
and $F_{n,n}(\zeta)=0$ for all~$n$.

\subsection{Fourier--Jacobi expansion\label{sectionFour}}

The Fourier--Jacobi expansion of a para\-mod\-u\-lar cusp form
$f\in\SkKN$ is
$$
f(\Omega)=\sum_{m\ge1}\phi_m(f)(\tau,z)\xi^{mN},
\quad \Omega=\pvar,\ \xi=\e(\omega),
$$
with Fourier--Jacobi coefficients
$$
\phi_m(f)(\tau,z)=\sum_{t=\smallpindN\in\XtwoN}\fc tfq^n\zeta^r,
\quad q=\e(\tau),\ \zeta=\e(z).
$$
Here the coefficient $\fc tf$ is also written $\fcJ{n,r}{\phi_m}$, and
the sum taken over pairs $(n,r)$ such that $4nmN-r^2>0$.
Each Fourier--Jacobi coefficient $\phi_m(f)$ lies in the space
$\JkmNcusp$ of weight~$k$, index~$mN$ Jacobi cusp forms, whose
dimension is known (Jacobi forms will briefly be reviewed just below).
These are Jacobi forms of level one---this is an advantage of the
paramodular group over the Hecke subgroup $\Gamma_0^{(2)}(N)$
of~$\SptwoZ$---and trivial character, both omitted from the notation.
The additive (Gritsenko) lift $\Grit:\JkNcusp\lra\SkKN^\epsilon\subset\SkKN$
for $\epsilon=(-1)^k$ is a section of the map $\SkKN\lra\JkNcusp$
that takes each~$f$ to~$\phi_1(f)$, \ie, $\phi_1({\rm Grit}(\phi))=\phi$
for all $\phi\in\JkNcusp$.

\section{Jacobi forms\label{sectionJaco}}

For the theory of Jacobi forms, see \cite{ez85,gn98,sz89}.
We give basics for quick reference.

\subsection{Definitions, singular bounds and principal bounds}

Let $k$ be an integer and let $\JFind$ be a nonnegative integer.  The
complex vector spaces of weight~$k$, index~$\JFind$ Jacobi forms,
Jacobi cusp forms, and weakly holomorphic Jacobi forms consist of
holomorphic functions $g:\UHP\times\C\lra\C$ that have Fourier series
representations
$$
g(\tau,z)=\sum_{n,r}\fcJ{n,r}gq^n\zeta^r,\quad\text{all $\fcJ{n,r}g\in\C$},
$$
and that satisfy transformation laws and constraints on the support.
With the usual notation $\gamma(\tau)=(a\tau+b)/(c\tau+d)$
and $j(\gamma,\tau)=c\tau+d$ for $\gamma=\smallmatabcd\in\SLtwoZ$ and
$\tau\in\UHP$, the transformation laws are
\begin{itemize}
\item $g(\gamma(\tau),z/j(\gamma,\tau))
=j(\gamma,z)^k\e(\JFind cz^2/j(\gamma,\tau))g(\tau,z)$ for all
$\gamma\in\SLtwoZ$,
\item $g(\tau,z+\lambda \tau+\mu)
=\e(-\JFind \lambda^2\tau-2\JFind \lambda z)g(\tau,z)$ for all
$\lambda,\mu\in\Z$.
\end{itemize}
Equivalently, the function
$$
\JFtoSMFm g:\UHPtwo\lra\C,\qquad
(\JFtoSMFm g)(\smallpvar)=g(\tau,z)\e(\JFind\omega)
$$
is holomorphic, has Fourier series representation
$(\JFtoSMFm g)(\Omega)=\sum_{n,r}\fcJ{n,r}gq^n\zeta^r\xi^\JFind$
where $\xi=\e(\omega)$,
and transforms as a Siegel modular form of weight~$k$ under the subgroup
of~$\SptwoZ$ generated by $\iota_1(\SLtwoZ)$ and~$-1_4$ and 
by the Heisenberg subgroup, given by the products of matrices
$\smallmat a00{a^*}$,
where $a=\smallmat10{\lambda}1$ with $\lambda\in\Z$,
times matrices $\smallmat1b01$,
where $b=\smallmat0{\mu}{\mu}{\kappa}$ with $\mu,\kappa\in\Z$;
this group is $\PtwooneZ$.
The quadratic character
$v_H\left(\smallmat a00{a^*}\smallmat1b01\right)=(-1)^{\lambda+\mu+\kappa}$
on the Heisenberg subgroup extends to $\PtwooneZ$
by making it trivial on $\iota_1(\SLtwoZ)$ and~$-1_4$.
To describe the constraints on the support,
associate to any integer pair $(n,r)$ the discriminant
$$
D=D(n,r)=4n\JFind-r^2.
$$
The principal part of~$g$ is $\sum_{n<0}g_n(\zeta)q^n$ where
$g_n(\zeta)=\sum_r\fcJ{n,r}g\zeta^r$, and the singular part is
$\sum_{D(n,r)\le0}\fcJ{n,r}gq^n\zeta^r$.
The transformation law
$(\JFtoSMFm g)\wtk{\smallmat a00{a^*}}=\JFtoSMFm g$
where $a=\smallmat10{\lambda}1$ for any ${\lambda}\in\Z$
shows that
$\fcJ{n-{\lambda}r+{\lambda}^2\JFind,r-2{\lambda}\JFind}g =\fcJ{n,r}g$
for all $(n,r)$ and~${\lambda}$,
and also $D(n-{\lambda}r+{\lambda}^2\JFind,r-2{\lambda}\JFind)=D(n,r)$;
thus for positive index~$\JFind$, all Fourier coefficients
$\fcJ{n,r}g$ having a given discriminant~$D$ are determined by those
coefficients having discriminant~$D$ such that furthermore
$|r|\le\JFind$, or even $-\JFind\le r<\JFind$.
\begin{itemize}
\item For the space $\Jac k\JFind$ of Jacobi forms, if $\JFind>0$ then
  the sum is taken over integers $n$ and~$r$ such that $D\ge0$, so
  that in particular $n\ge0$, and if $\JFind=0$ then the sum is taken
  over pairs $(n,r)\in\Znn\times\{0\}$, and we have elliptic modular forms.
\item For the space $\Jcusp k\JFind$ of Jacobi cusp forms, if $\JFind>0$ then
  the sum is taken over integers $n$ and~$r$ such that $D>0$, so
  that in particular $n>0$, and if $\JFind=0$ then the sum is taken
  over pairs $(n,r)\in\Zpos\times\{0\}$, and we have elliptic cusp forms.
\item For the space $\Jwh k\JFind$ of weakly holomorphic Jacobi forms
  the sum is taken over integers $n\gg-\infty$ and~$r$.
  A weakly holomorphic Jacobi form is holomorphic on $\UHP\times\C$.
  Assuming now that the index~$\JFind$ is positive, we show that the
  conditions $n\gg-\infty$ and $D\gg-\infty$ are equivalent.
  For one direction, if for some~$n_o$, all coefficients $\fcJ{n,r}g$
  where $n<n_o$ are~$0$, then all coefficients $\fcJ{n,r}g$ where
  $4n\JFind-r^2<4n_o\JFind-\JFind^2$ are~$0$: indeed, we may take
  $|r|\le\JFind$, giving
  $4n\JFind-\JFind^2\le4n\JFind-r^2<4n_o\JFind-\JFind^2$ and thus
  $n<n_o$, so $\fcJ{n,r}g=0$ as claimed.  Conversely, if for
  some~$D_o$, all coefficients $\fcJ{n,r}g$ where $4n\JFind-r^2<D_o$
  are~$0$, then also $\fcJ{n,r}g=0$ for all $n<D_o/(4\JFind)$.
  Weakly holomorphic Jacobi forms $g(\tau,z)$ of index~$0$ are
  constant in~$z$ by Liouville's Theorem, and so only their Fourier
  coefficients $\fcJ{n,0}g$ can be nonzero.  Thus they are like
  elliptic modular forms other than possibly being meromorphic
  at~$i\infty$---for example, Klein's modular invariant function~$j$.
\end{itemize}
We make two more comments about weakly holomorphic Jacobi forms.
First, the singular part $\sum_{D(n,r)\le0}\fcJ{n,r}gq^n\zeta^r$ of
such a Jacobi form is determined by the finitely many nonzero singular Fourier
coefficients $\fcJ{n,r}g$ such that $n\le\JFind/4$ and $|r|\le\JFind$;
indeed, we know that it is determined by its terms that have indices
$(n,r)$ with $|r|\le\JFind$, and this combines with the condition
$D\le0$ to give $n\le\JFind/4$.
Second, consider the Fourier series of such a Jacobi form as a
$q$-expansion,
$$
g(\tau,z)=\sum_ng_n(\zeta)q^n,\qquad
g_n(\zeta)=\sum_r\fcJ{n,r}g\zeta^r\text{ for each~$n$}.
$$
The coefficient $g_n(\zeta)$ is a Laurent polynomial in~$\zeta$ because 
the support of~$g$ is bounded by $n\ge n_o$ for some~$n_o$, and so the 
$q^n$-coefficient $\fcJ{n,r}g\zeta^r$ can be nonzero
only for the finitely many values of~$r$ such that
$4n\JFind-r^2\ge4n_o\JFind-\JFind^2$.

When we need the more general case of a Jacobi form $g$,
possibly of half-integral weight and/or index, that
transforms by a multiplier, we indicate so in the notation;
for example, the odd Jacobi theta function~$\vartheta$ lies
in~$\Jcusp{1/2}{1/2}(\epsilon^3 v_H)$, where $\epsilon$ is the
multiplier of the Dedekind eta function; \cf~\cite{gn98}.
When all the Fourier coefficients lie in a ring we also append the
ring to the notation; for example, $\Jwh 0\JFind(\Z)$ denotes the
$\Z$-module of weight~$0$, index~$\JFind$ weakly holomorphic forms
of trivial multiplier with integral Fourier coefficients.

\subsection{Determining Fourier coefficients for weight zero}

The following elementary result provides a starting point for our algorithm.

\begin{theorem}\label{wt0integralitythm}
Let $m$ be a positive integer.
Any weight~$0$, index~$\JFind$ weakly holomorphic Jacobi form
$\psi\in\Jwh0\JFind$,
$$
\psi(\tau,z)=\sum_{n,r}\fcJ{n,r}\psi q^n\zeta^r=\sum_n\psi_n(\zeta)q^n,
$$
is determined by its Fourier coefficients $\fcJ{n,r}\psi$ for all
pairs $(n,r)$ such that $4n\JFind-r^2<0$.  Consequently, $\psi$ is
determined by its coefficient functions $\psi_n(\zeta)$ for~$n<\JFind/4$.
\end{theorem}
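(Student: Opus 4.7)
The plan is to reduce the main statement — that $\psi$ is determined by the coefficients $\fcJ{n,r}\psi$ with $4n\JFind-r^2<0$ — to the classical vanishing $\Jac0\JFind=\{0\}$ for positive index, and then to deduce the consequence about $\psi_n(\zeta)$ for $n<\JFind/4$ from the Jacobi translation law.

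By linearity it suffices to prove that if every $\fcJ{n,r}\psi$ with $D(n,r)=4n\JFind-r^2<0$ vanishes then $\psi=0$. First I would observe that such a $\psi$ is in fact a holomorphic Jacobi form. Since $\JFind\ge1$, any $(n,r)$ with $n<0$ automatically satisfies $D<0$, so the hypothesis kills the entire principal part of~$\psi$; the remaining Fourier support lies in $\{(n,r):D\ge0\}$, which forces $n\ge0$. Hence $\psi\in\Jac0\JFind$.

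Next I would invoke the classical fact $\Jac0\JFind=0$ for $\JFind\ge1$. The cleanest route is the Eichler--Zagier theta decomposition
$$
\psi(\tau,z)=\sum_{\mu\pmod{2\JFind}}h_\mu(\tau)\,\theta_{\JFind,\mu}(\tau,z),
$$
under which $(h_\mu)$ forms a holomorphic vector-valued modular form of weight $-1/2$ for the Weil representation of $\mathrm{Mp}_2(\Z)$ attached to the rank-one discriminant form. Holomorphic modular forms of negative weight vanish by the usual valence argument, so each $h_\mu\equiv0$ and hence $\psi\equiv0$.

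Finally, to deduce the consequence, I would use the translation law $\fcJ{n-\lambda r+\lambda^2\JFind,\,r-2\lambda\JFind}\psi=\fcJ{n,r}\psi$, which preserves the discriminant~$D$. Given any $(n,r)$ with $D<0$, choose $\lambda\in\Z$ so that $r'=r-2\lambda\JFind$ satisfies $|r'|\le\JFind$; then $n'=(D+r'^2)/(4\JFind)<\JFind^2/(4\JFind)=\JFind/4$, so $\fcJ{n,r}\psi=\fcJ{n',r'}\psi$ already appears in some $\psi_{n'}(\zeta)$ with $n'<\JFind/4$. Thus vanishing of $\psi_n(\zeta)$ for all $n<\JFind/4$ forces vanishing of $\fcJ{n,r}\psi$ for all $D<0$, and the main statement finishes the job. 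The hard part will be the vanishing of $\Jac0\JFind$ for $\JFind\ge1$: although classical, it is not self-evident and rests on reducing via the theta decomposition to the vanishing of negative-weight holomorphic vector-valued modular forms (or, alternatively, on the Eichler--Zagier dimension formula for Jacobi forms).
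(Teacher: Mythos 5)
Your argument is correct, and its skeleton coincides with the paper's: both reduce the theorem to the single fact $\Jzerom=0$ for $m>0$ (if all coefficients with $4n\JFind-r^2<0$ vanish, the remaining support has $D\ge0$, hence $n\ge0$, so the form lies in $\Jac0\JFind$ and must vanish), and both obtain the consequence about $\psi_n(\zeta)$ for $n<\JFind/4$ from the translation law $\fcJ{n-\lambda r+\lambda^2\JFind,\,r-2\lambda\JFind}\psi=\fcJ{n,r}\psi$, exactly as in the paper's Section~3.1. Where you diverge is in how the central vanishing $\Jac0\JFind=0$ is established. You invoke the Eichler--Zagier theta decomposition, writing $\psi=\sum_\mu h_\mu\theta_{\JFind,\mu}$ with $(h_\mu)$ a holomorphic vector-valued form of weight $-\tfrac12$ for the Weil representation, and then kill it by the valence argument for negative weight; this is classical and correct, but it imports the theta-decomposition machinery, which the paper never develops. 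The paper instead proves $\Jzerom=0$ by a self-contained elementary device: restrict $\psi$ to $z=\lambda\tau$ with $\lambda\in\Q$, note that $\e(m\lambda^2\tau)\psi(\tau,\lambda\tau)$ is a weight-$0$ form for a group commensurable with $\SLtwoZ$, hence constant, and then isolate each Fourier coefficient by shrinking $\lambda$ so that distinct exponents $n+\lambda r$ cannot collide. The authors state explicitly that they chose this route because the fact, though stated on page~11 of Eichler--Zagier, may not be well known in the form they need; your route is shorter if one is willing to cite or redevelop the theta decomposition and the vanishing of negative-weight vector-valued forms, while theirs buys independence from that apparatus at the cost of the small exponent-collision estimate.
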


\begin{proof}
It suffices to prove that $\Jzerom=0$ for $m>0$.
This fact is stated on page~11 of \cite{ez85}, but we give an
independent proof.
Let $g\in\Jzerom$ be given, and define a function $f:\UHP\times\Q\lra\C$
by $f(\tau,\lambda)=\e(m\lambda^2\tau)g(\tau,\lambda\tau)$.
For any fixed~$\lambda$, the function $\tau\mapsto f(\tau,\lambda)$ is
an elliptic modular form of weight~$0$ for a subgroup of~$\SLtwoQ$
commensurable with $\SLtwoZ$, so it is a constant~$C(\lambda)$.
The Fourier expansion $g(\tau,z)=\sum_{n,r}\fcJ{n,r}g q^n\zeta^r$,
summing over integer pairs $(n,r)$ with $4nm-r^2\ge0$, gives
$f(\tau,\lambda)=\sum_{n,r}\fcJ{n,r}gq^{n+\lambda r+\lambda^2m}$.
We show that for any fixed integer pair $(n_o,r_o)$ with
$4n_o\JFind-r_o^2\ge0$, the Fourier coefficient $\fcJ{n_o,r_o}g$ is~$0$.
For small enough nonzero~$\lambda$ we have $n_o+\lambda r_o+\lambda^2
m\ne0$, and hence the coefficient of~$q^{n_o+\lambda r_o+\lambda^2m}$
in the Fourier expansion of~$g$ is~$0$.  This coefficient is
$\sum_{n,r:n+\lambda r=n_o+\lambda r_o}\fcJ{n,r}g$.
We show that after shrinking~$\lambda$ further if necessary, this
sum is the singleton $\fcJ{n_o,r_o}g$, which therefore is~$0$ as
desired.  Indeed, consider a second integer pair $(n,r)$ with
$4n\JFind-r^2\ge0$ and $n+\lambda r=n_o+\lambda r_o$; thus $n\ne n_o$
since $\lambda$ is nonzero.  If $n\le n_o-1$ then the estimate
$n_o+\lambda r_o=n+\lambda r\le n+2\lambda\sqrt{n\JFind}
\le n_o-1+2\lambda\sqrt{(n_o-1)\JFind}$
gives a contradiction for small enough~$\lambda=\lambda(n_o,r_o)$,
independently of~$(n,r)$.
If $n\ge n_o+1$ then the estimate
$n_o+\lambda r_o=n+\lambda r\ge n-2\lambda\sqrt{n\JFind}
=\sqrt n(\sqrt n-2\lambda\sqrt{\JFind})
\ge\sqrt{n_o+1}(\sqrt{n_o+1}-2\lambda\sqrt{\JFind})
=n_o+1-2\lambda\sqrt{(n_o+1)\JFind}$ again gives a contradiction for small
enough $\lambda=\lambda(n_o,r_o)$.
\end{proof}

We usually use a weaker version of Theorem~\ref{wt0integralitythm}:
Any $\psi\in\Jwh0\JFind$ is determined by its singular part, \ie, by
its Fourier coefficients $\fcJ{n,r}\psi$ for all pairs $(n,r)$ such
that $4n\JFind-r^2\le0$, and so $\psi$ is determined by its
coefficient functions $\psi_n(\zeta)$ for~$n\le\JFind/4$.  We proved
the stated theorem partly in case the argument here that $\Jzerom=0$
for~$m>0$, a stronger fact than $\Jcusp0\JFind=0$ for~$m>0$, is not
well known.

\section{Theta blocks\label{sectionThet}}

The theory of theta blocks is due to Gritsenko, Skoruppa, and Zagier
\cite{gsz}.

\subsection{Eta and theta\label{sectionThetDef}}

Recall the Dedekind eta function $\eta\in\Jcusp{1/2}0(\epsilon)$
and the odd Jacobi theta function
$\vartheta\in\Jcusp{1/2}{1/2}(\epsilon^3 v_H)$,
\begin{align*}
\eta(\tau)&=q^{1/24}\prod_{n\ge1}(1-q^n),\\
\vartheta(\tau,z)
&=\sum_{n\in\Z}(-1)^nq^{(n+1/2)^2/2}\zeta^{n+1/2}\\
&=q^{1/8}(\zeta^{1/2}-\zeta^{-1/2})
\prod_{n\ge1}(1-q^n\zeta)(1-q^n\zeta\inv)(1-q^n).
\end{align*}
For any $r\in\Zpos$, define 
$\vartheta_r\in\Jcusp{1/2}{r^2/2}(\epsilon^3 v_H^r)$
to be $\vartheta_r(\tau,z)=\vartheta(\tau,rz)$,
so that
$$
\vartheta_r(\tau,z)/\eta(\tau)
=q^{1/12}(\zeta^{r/2}-\zeta^{-r/2})
\prod_{n\ge1}(1-q^n\zeta^r)(1-q^n\zeta^{-r}).
$$
The quotient $\vartheta_r/\eta$ lies
in~$\Jweak0{r^2/2}(\epsilon^2 v_H^r)$, where
as in \cite{ez85}, {\em weak\/} Jacobi forms are supported on $n\ge0$.
As shown by their product formulas, $\eta(\tau)$ is nonzero for
all~$\tau\in\UHP$ and $\vartheta_r(\tau,z)/\eta(\tau)$ vanishes
precisely when $z+\Lambda_\tau$ is an $r$-torsion point of the abelian
group $E_\tau=\C/(\tau\Z+\Z)$.

\begin{definition}\label{tbdef}
A {\bf theta block} is a meromorphic function of the form
$$
\TB(\tau,z)=\eta(\tau)^{\varphi(0)}
\prod_{r\ge1}(\vartheta_r(\tau,z)/\eta(\tau))^{\varphi(r)},
$$
where $\varphi:\Z\lra\Z$ is even and finitely supported.
\end{definition}
Here we designate two subtypes of theta block, to be used throughout
this article:
A theta block such that $\varphi(r)\ge0$ for each~$r\in\Zpos$ is a
theta block {\bf without denominator}.
A theta block is {\bf basic} if it is a weakly holomorphic Jacobi form of
integral weight~$k$ and nonnegative integral index~$\JFind$.

Thus a theta block has the product form
\begin{equation}\label{TBwithdoubleproduct}
\TB(\tau,z)=q^\BPA b(\zeta)\prod_{n\ge1,\,r\in\Z}(1-q^n\zeta^r)^{\varphi(r)},
\end{equation}
where the leading exponent of~$q$ is
$$
\BPA=\frac1{24}\sum_{r\in\Z}\varphi(r),
$$
and the {\bf baby theta block} of~$\TB$ is
$b(\zeta)=\prod_{r\ge1}(\zeta^{r/2}-\zeta^{-r/2})^{\varphi(r)}$, or
$$
b(\zeta)=\zeta^{-B}\prod_{r\ge1}(\zeta^r-1)^{\varphi(r)},
\qquad
B=\frac12\sum_{r\ge1}r\varphi(r).
$$
The multiplicity function~$\varphi$ determines a {\bf germ}
$$
G(\zeta)=\sum_{r\in\Z}\varphi(r)\zeta^r.
$$
This germ determines the $q$-expansion  $1-G(\zeta)q+\cdots$ 
of the double product in~\eqref{TBwithdoubleproduct}, 
which has coefficients in  $\LaurentZ$,  
and overall the theta block is
$$
\TB(\tau,z)=q^Ab(\zeta)(1-G(\zeta)q+\cdots).
$$
For the grand theta block formula that expresses
this $q$-expansion in terms of double partitions
and the functions~$G(\zeta^j)$, see \cite{psyan}.

Functions that we call {\em atoms} were introduced in \cite{gsz} 
to characterize holomorphic theta blocks.
Let $\mu$ denote the M\"obius function from elementary number theory.
For any positive integer~$r$, define the $r$-th atom to be
$$
\atom r(\tau,z)
=\prod_{s\mid r}\vartheta_s(\tau,z)^{\mu(r/s)}
=\vartheta_r(\tau,z)
\frac1{\prod_{p\mid r}\vartheta_{r/p}(\tau,z)}
\prod_{p \ne p':\,p,p'\mid r}\vartheta_{r/pp'}(\tau,z)
\cdots.
$$
Because $\atom r$ for~$r\ge2$ is the theta block with $\varphi(s)=\mu(r/s)$
for~$s\mid r$ and $\varphi(s)=0$ for all other nonnegative~$s$, the
theta block product form \eqref{TBwithdoubleproduct} specializes to give
$$
\atom r(\tau,z)=\zeta^{-\tfrac12\phi(r)}\Phi_r(\zeta)
\prod_{s\mid r}\prod_{n:\gcd(n,s)=1}
\Phi_{r/s}(q^n\zeta^s)\Phi_{r/s}(q^n\zeta^{-s}),
\quad r\ge2,
$$
where $\phi$ is Euler's totient function and $\Phi_d$ is the $d$-th
cyclotomic polynomial.
(A small point here is that for~$d>1$, the monic $d$-th cyclotomic
polynomial also has constant term~$1$, and for~$d=1$ the sign-change
$\Phi_1(X)=-(1-X)$ is absorbed into the previous display's multiplicand.)
For $r \ge 2$, we have $\atom r \in \Jweak{0}{m}(v_H^{\phi(r)})$ for 
$m=\tfrac12 r^2\prod_{p\mid r}(1-1/p^2)$.
By M\"obius inversion, the penultimate display gives
$$
\vartheta_r=\prod_{s\mid r}\atom s,
$$
and so the formal representation of a theta block by atoms is
$$
\TB(\tau,z)=\eta(\tau)^{\nu(0)}
(\vartheta(\tau,z)/\eta(\tau))^{\nu(1)}
\prod_{r\ge2}\atom r(\tau,z)^{\nu(r)},
$$
where
$$
\nu(0)=\varphi(0),\qquad
\nu(r)=\sum_{t\ge1}\varphi(tr)\text{ for }r\ge1.
$$
Also,
the baby theta block of~$\TB$ is
$$
b(\zeta)=\zeta^{-B}
\prod_{r\ge1}\Phi_r(\zeta)^{\nu(r)},
\qquad
B=\frac12\sum_{r\ge1}\phi(r)\nu(r),
$$
with $B$ the same here as in the previous paragraph.
The condition for a theta block to be holomorphic in~$(\tau,z)$ is
that $\nu(r)\ge0$ for~$r\ge1$, \ie, $\sum_{t\ge1}\varphi(tr)\ge0$
for~$r\ge1$.  This is also the condition for its baby theta block to
be holomorphic in~$z$.
In terms of cyclotomic polynomials and the atom-multiplicity
function~$\nu$, the theta block product form is
$$
\TB(\tau,z)=q^\BPA b(\zeta)
\prod_{n\ge1}\Phi_1(q^n)^{\nu(0)}
\prod_{r\ge1}\prod_{s\mid r}\prod_{(n,s)=1}
\Phi_{r/s}(q^n\zeta^s)^{\nu(r)}\Phi_{r/s}(q^n\zeta^{-s})^{\nu(r)},
$$
with $\BPA=\tfrac1{24}\nu(0)+\tfrac1{12}\nu(1)$.

Recall that a theta block is basic if it is a weakly holomorphic
Jacobi form of integral weight~$k$ and nonnegative integral index~$\JFind$.
A basic theta block has weight $k=\tfrac12\varphi(0)=\tfrac12\nu(0)$
and index $m=\tfrac12\sum_{r\ge1}r^2\varphi(r)
=\tfrac12\sum_{r\ge1}\nu(r)r^2\prod_{p\mid r}(1-1/p^2)$,
the last equality holding because
$\varphi(r)=\sum_{t\ge1}\mu(t)\nu(tr)$ for $r\ge1$.
Because a basic theta block is holomorphic on $\UHP\times\C$, it
satisfies the holomorphy condition given at the end of the previous paragraph.
All theta blocks in this article are basic, some without denominator and
others with.

\subsection{Weak holomorphy from basic theta blocks}

Let a weight~$k$ and an index~$\JFind$ be given.
Let $\phi\in\Jwh k\JFind$ be a basic theta block without denominator,
and let $\BPA$ denote the $q$-order of~$\phi$.
Let $V_2$ denote the index-raising Hecke operator of \cite{ez85}, page~$41$.
Not only does the quotient $(\phi|V_2)/\phi$ transform as a Jacobi form of
weight~$0$ and index~$\JFind$, but furthermore it is weakly
holomorphic and has integral Fourier coefficients.
Theorem~1.1 of~\cite{gpy15} says, in paraphrase, that if $\BPA$ is even,
or if $\BPA$ is odd and $\phi$ lies in $\Jac k\JFind$, then the
Borcherds product arising from the signed quotient
$\psi=(-1)^\BPA(\phi|V_2)/\phi$ is a {\em holomorphic\/} paramodular form.
Theorem~6.6 of~\cite{gpy15} provides more specifics.
The following theorem determines when a basic theta block~$\phi$, now
possibly {\em with\/} denominator, gives rise to a weakly holomorphic Jacobi
form by the same quotient construction.  The theorem connotes no
holomorphy assertion about the resulting Borcherds product.

\begin{theorem}\label{whquotienttheorem}
For a given weight~$k \in \Z$ and index~$\JFind \in \Znn$, consider a
basic theta block $\phi\in\Jwh k\JFind$, having $q$-order $\BPA \in \Z$,
baby theta block~$b$, and germ~$G$,
$$
\phi(\tau,z)=q^\BPA b(\zeta)(1-G(\zeta)q+\cdots).
$$
Let the decomposition of the basic theta block as a product of atoms be
$$
\phi(\tau,z)=\eta(\tau)^{\nu(0)-\nu(1)}
\prod_{r\ge1}\atom r(\tau,z)^{\nu(r)}.
$$
The following conditions are equivalent.
\begin{enumerate}
\item $(\phi|V_2)/\phi$ is weakly holomorphic, 
\item $b(\zeta)$ divides $b(\zeta^2)$ in $\LaurentC$,
\item $\nu(r)\ge\nu(2r)$ for all $r\ge1$.
\end{enumerate}
In the affirmative case we have $(\phi|V_2)/\phi \in\Jwh 0\JFind(\Z)$.
\end{theorem}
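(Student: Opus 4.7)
The plan is to handle (2)$\iff$(3) by direct algebra on the cyclotomic factorization of $b(\zeta)$, and (1)$\iff$(3) by a divisor analysis of $\phi$ and $\phi|V_2$ on $\UHP\times\C$, with integrality then following once weak holomorphy is proved.

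For (2)$\iff$(3), use the cyclotomic factorization $b(\zeta) = \zeta^{-B}\prod_{r\ge1}\Phi_r(\zeta)^{\nu(r)}$ recalled above together with the classical identities $\Phi_r(\zeta^2)=\Phi_{2r}(\zeta)\Phi_r(\zeta)$ for $r$ odd and $\Phi_r(\zeta^2)=\Phi_{2r}(\zeta)$ for $r$ even. The factors $\Phi_r(\zeta)$ with $r$ odd cancel between numerator and denominator in $b(\zeta^2)/b(\zeta)$, and after reindexing one obtains
\[
\frac{b(\zeta^2)}{b(\zeta)} = \zeta^{-B}\prod_{r\ge1}\Phi_{2r}(\zeta)^{\nu(r)-\nu(2r)},
\]
which lies in $\LaurentC$ exactly when every exponent $\nu(r)-\nu(2r)$ is nonnegative.

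For (1)$\iff$(3), I apply the standard Hecke-operator formula
\[
(\phi|V_2)(\tau,z) = \tfrac12\phi(\tau/2,z) + \tfrac12\phi((\tau+1)/2,z) + 2^{k-1}\phi(2\tau,2z).
\]
The quotient $\psi = (\phi|V_2)/\phi$ transforms as a weight~$0$, index~$\JFind$ Jacobi-like function and is a priori meromorphic on $\UHP\times\C$, with possible poles only along the zero divisor of $\phi$, which by the atom decomposition consists of the primitive $r$-torsion lines of $\Lambda_\tau$ with multiplicity $\nu(r)$. At a primitive $r$-torsion point $z_0 = (a\tau+b)/r$ with $\gcd(a,b,r)=1$, I determine the primitive torsion orders of $z_0$ in $\Lambda_{\tau/2}$ and $\Lambda_{(\tau+1)/2}$, and of $2z_0$ in $\Lambda_{2\tau}$; a short parity case analysis using $\gcd(a,b,r)=1$ shows that for $r$ odd all three orders equal $r$, while for $r$ even they form a permutation of $(r/2,r,r)$ determined by the parities of $a$ and $b$. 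Hence the three summand vanishing orders at $z_0$ are $(\nu(r/2),\nu(r),\nu(r))$ in some order. When $\nu(r/2)<\nu(r)$ the minimum is achieved by only one term, so no cancellation is possible and $\phi|V_2$ has order exactly $\nu(r/2)$ at $z_0$, producing a pole of $\psi$; when $\nu(r/2)\ge\nu(r)$ the sum vanishes to order at least $\nu(r)$. Holomorphy of $\psi$ therefore forces $\nu(r/2)\ge\nu(r)$ for every even $r$, i.e., $\nu(s)\ge\nu(2s)$ for every $s\ge1$, which is condition (3), and the converse holds by the same case analysis.

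Integrality of $\psi$ then follows by inductively solving $\phi\cdot\psi = \phi|V_2$ in the Fourier expansion: $b(\zeta)$ is a monomial times a monic polynomial in $\Z[\zeta]$, so exact division in $\LaurentC$ preserves $\LaurentZ$-coefficients, provided the right-hand side lies in $\LaurentZ[[q]]$ at each $q$-order. I expect the main obstacle to be the parity case analysis producing the three primitive torsion orders cleanly in all subcases, and, for weights $k\le 0$ where the $2^{k-1}$ factor introduces explicit fractions, verifying that the theta-block structure of $\phi$ still absorbs the denominators so that the final quotient lies in $\LaurentZ$.
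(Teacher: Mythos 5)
Your proposal is correct in substance, and for the part involving condition (1) it takes a genuinely different route from the paper. The paper proves (1)$\iff$(2) by invoking its Lemma~\ref{newlemmafour} (weak holomorphy of $(\phi|V_2)/\phi$ is equivalent to holomorphy of $(\phi|V_2)/b$ on $\UHP\times\C$) and then observing, from the same $V_2$ formula you use, that the two $\tau/2$-terms carry the baby theta block $b(\zeta)$ while $\phi(2\tau,2z)$ carries $b(\zeta^2)$; the step (2)$\iff$(3) is then exactly your cyclotomic computation, including the same identity $b(\zeta^2)/b(\zeta)=\zeta^{-B}\prod_{r\ge1}\Phi_{2r}(\zeta)^{\nu(r)-\nu(2r)}$. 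You instead prove (1)$\iff$(3) directly, by computing along a primitive $r$-torsion curve the primitive orders of the three image curves under the substitutions $(\tau,z)\mapsto(2\tau,2z)$, $(\tau/2,z)$, $((\tau+1)/2,z)$: all equal to $r$ for $r$ odd, and a permutation of $(r/2,r,r)$ for $r$ even, governed by the parities of the entries of $v$. This is right, and it is consistent with the paper's own machinery: these three maps are $\iota_1\sigma$ for $\sigma=\smallmat{2}{0}{0}{1},\smallmat{1}{0}{0}{2},\smallmat{1}{1}{0}{2}$, and the paper's computation that $\iota_1\sigma$ sends $V(r,v)$ to $V(r,v\Adj(\sigma))$ with preservation of vanishing order, together with the divisor formula \eqref{thetablockdivisor}, gives exactly your orders $(\nu(r/2),\nu(r),\nu(r))$; your unique-minimum argument then settles both directions. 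Your route bypasses the orbit lemma, Corollary~\ref{cor3}, and Lemma~\ref{newlemmafour} entirely and never passes through condition (2) to reach (1); what the paper's route buys is that (2), the divisibility statement about the baby theta block, is precisely the form the algorithm tests in practice, so (1)$\iff$(2) is the working content, with (3) as the combinatorial reformulation.

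Two loose ends, both minor. First, your divisor analysis yields holomorphy of $(\phi|V_2)/\phi$ on $\UHP\times\C$, whereas condition (1) asks for weak holomorphy, i.e.\ also a finite principal part; this does not follow from the transformation laws alone and needs a sentence (the paper supplies the analogous sentence at the end of Lemma~\ref{newlemmafour}): since $\phi=q^A b(\zeta)(1-G(\zeta)q+\cdots)$ with $b$ having unit extreme coefficients and the parenthetical factor a unit in $\LaurentZ[[q]]$, comparing $q$-orders in $\phi\cdot\psi=\phi|V_2$ bounds the $q$-support of $\psi$ below. Second, your integrality step is the same formal division the paper performs, and it works for the same reason (monic-at-both-ends $b(\zeta)$, unit power series factor, and $b(\zeta^2)/b(\zeta)\in\LaurentZ$ once (2) holds); the $2^{k-1}$ worry you raise for $k\le0$ is legitimate but is equally unaddressed in the paper's proof, which tacitly uses $2^{k-1}\in\Z$ -- in all applications the theta blocks have weight $k\ge2$, so you may simply carry that hypothesis, as the paper implicitly does.
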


The proof of this theorem requires some elementary theory of divisors.

For any positive integer~$r$ and any integer row vector $v=[A\quad B]\in\Z^2$,
introduce a map $T(r,v)$ that enhances a complex upper half plane
point~$\tau$ with a complex pre-image of an $r$-torsion point in
$E_\tau=\C/\Lambda_\tau$ where $\Lambda_\tau=\tau\Z+\Z$,
$$
T(r,v):\UHP\lra\UHP\times\C,\qquad
T(r,v)(\tau)=(\tau, (A\tau+B)/r).
$$
Thus the image $V(r,v)$ of~$T(r,v)$ is the zero set in $\UHP\times\C$
of the polynomial
$$
\varpi(r,v;\tau,z)=A\tau+B-rz,
$$
an irreducible holomorphic convex algebraic subvariety of
dimension one.
By the Taylor expansion, any holomorphic function vanishing
on~$V(r,v)$ is divisible by $\varpi(r,v;\tau,z)$ in the ring of
holomorphic functions.
We call the points $(\tau,z)$ of~$V(r,v)$ torsion points or
$r$-torsion points of $\UHP\times\C$, or primitive $r$-torsion points
if $z+\Lambda_\tau$ has order~$r$ in~$E_\tau$,
and we call $V(r,v)$ a torsion curve.
The points of $V(r,v)$ have order $r/\gcd(r,A,B)$, and $V(r,v)$ is
unaffected by dividing $r$ and~$v$ by the gcd, giving new $r$ and~$v$
such that $V(r,v)$ consists of primitive $r$-torsion points.

As noted above, the zero set of $\vartheta_r$ is the set of
$r$-torsion points of~$\UHP\times\C$.  Any theta block
$\phi=\eta^{\varphi(0)}\prod_{r\ge1}(\vartheta_r/\eta)^{\varphi(r)}$
is thus a meromorphic function whose zeros and poles are $R$-torsion
points where $R$ is the least common multiple of the nonzero support
of~$\varphi$.
Only a finite number of curves $V(r,v)$ with $r\mid R$ meet any
compact set in~$\UHP\times\C$; furthermore, distinct $V(r,v)$ are
disjoint, and so any point $(\tau_o,z_o)\in V(r_o,v_o)$ has a
neighborhood~$U_o$ that meets no other torsion curve $V(r,v)$ with
$r\mid R$.  The product form of theta blocks shows that they all
belong to the following set~$\mathcal F$.

\begin{definition}\label{merof}
Let $\mathcal F$ be the multiplicative group of meromorphic functions~$f$
on~$\UHP\times\C$ whose zero and polar sets consist only of torsion,
each such torsion point $(\tau_o,z_o)$ lying in a unique torsion curve
$V(r,v)$ and having a neighborhood~$U_o$ where $f$ takes the form
$$
f(\tau,z) = \varpi(r,v;\tau,z)^{\nu}h_o(\tau,z),
\quad\nu\in\Z,\ h_o\text{ holomorphic and nonzero on }U_o.
$$
\end{definition}

The integer $\nu$ in Definition~{\ref{merof}} depends only on the
curve $V(r,v)$ and not on the point $(\tau_o,z_o)\in V(r,v)$, because
$\nu$ is locally constant as a function of $(\tau_o,z_o)$ and $V(r,v)$
is convex.  Thus, a meromorphic function $f\in\mathcal F$ has a well
defined order $\ord(f,V(r,v))=\nu$ on each torsion curve $V(r,v)$.
The divisor of~$f$ is
$$
\divv(f)=\sum_{r,v:\, \gcd(r,v)=1}\ord(f,V(r,v))V(r,v),
$$
an element of the free $\Z$-module on the distinct torsion curves.
In particular, $\divv(\vartheta_r)=\sum_vV(r,v)$ and
$\divv(\atom r)=\sum_{v:\,\gcd(r,v)=1}V(r,v)$;
these are the $r$-torsion and primitive $r$-torsion divisors.
The $r$-torsion divisor is the sum over positive integers~$s$
dividing~$r$ of the primitive $s$-torsion divisors, consonantly with
the relation $\vartheta_r=\prod_{s\mid r}\atom s$.
The divisor of a theta block
$\phi=\eta^{\nu(0)-\nu(1)}\prod_{r\ge1}(\atom r)^{\nu(r)}$ is
\begin{equation}\label{thetablockdivisor}
\divv(\phi)=\sum_{r\ge1}\sum_{v:\,\gcd(r,v)=1}\nu(r)V(r,v).
\end{equation}
For the general divisor theory of holomorphic functions, see
\cite{gunning}, pp.~76--78.
Here we describe only the simpler divisor theory of~$\mathcal F$,
giving direct computational arguments even when more general ones are
available.

For any $2\times2$ integral matrix having positive determinant,
$\sigma=\smallmatabcd\in\MatsplustwoZ$ with $\Delta=\det\sigma>0$,
define a corresponding integral symplectic matrix with
similitude~$\Delta$,
$$
\iotaonesigma=\left[\begin{matrix}
a & & b & \\
& \Delta & & \\
c & & d & \\
  & & & 1 \\
\end{matrix}\right].
$$
That is, $(\iotaonesigma)'J\iotaonesigma=\Delta J$
where $J$ is the skew form $\smallmat0{-1}1{\phantom{-}0}$.
The matrix $\sigma$ acts holomorphically on~$\UHP$ while
$\iotaonesigma$ acts biholomorphically on~$\UHP\times\C$,
$$
\sigma(\tau)=\frac{a\tau+b}{c\tau+d},\qquad
(\iotaonesigma)(\tau,z)=\left(\sigma(\tau),\frac{\Delta z}{c\tau+d}\right).
$$

We observe the action of $\iota_1\MatsplustwoZ$ on the torsion curves $V(r,v)$.
Let $\Adj(\sigma)$ denote the adjoint
$\smallmat{\phantom{-}d}{-b}{-c}{\phantom{-}a}$ of a matrix
$\sigma=\smallmatabcd$ in~$\MatsplustwoZ$.
The diagram
$$
\xymatrix{
\UHP\ar[rr]^-{T(r,v)}\ar[d]_\sigma && \UHP\times\C\ar[d]^{\iotaonesigma}\\
\UHP\ar[rr]^-{T(r,v\Adj\sigma)} && \UHP\times\C
}
$$
commutes, both ways around taking any~$\tau\in\UHP$ to
$(\sigma(\tau),(\Delta/r)(A\tau+B)/(c\tau+d))$.  Consequently,
$$
\text{$\iotaonesigma$ takes $V(r,v)$ to~$V(r,v\Adj(\sigma))$}.
$$
Replacing $\sigma$ by~$\Adj(\sigma)$ in the previous display shows
that $\iotaoneAdjsigma$ takes $V(r,v)$ to~$V(r,v\sigma)$, and so also,
because $\iotaonesigma\inv$ acts as $\iotaoneAdjsigma$ followed by
$(\tau,z)\mapsto(\tau,z/\Delta)$,
$$
\text{$\iotaonesigma\inv$ takes $V(r,v)$ to~$V(\Delta r,v\sigma)$}.
$$

We show that $\iota_1\MatsplustwoZ$ preserves the group~$\mathcal F$
from Definition~\ref{merof}
and preserves vanishing order under corresponding images of torsion.
Consider any $f\in\mathcal F$, $r\in\Zpos$, $v\in\Z^2$, and
$\sigma\in\MatsplustwoZ$.
To show that $f\circ\iotaonesigma$ again lies in~$\mathcal F$, first
note that we have established that $\iotaonesigma\inv$ takes
$r$-torsion to $\Delta r$-torsion, and so $f\circ\iotaonesigma$ has
its zero and polar sets supported on torsion because $f$ does.
Second, for any torsion point $p_o\in V(r,v)$, set
$p_1=\iotaonesigma(p_o)\in V(r,v\Adj(\sigma))$.
We have $f=\varpi(r,v\Adj(\sigma))^\nu h_1$ on some neighborhood~$U_1$
of~$p_1$, where $h_1$ is a holomorphic unit and
$\nu=\ord(f,V(r,v\Adj(\sigma)))=\ord(f,\iotaonesigma(V(r,v)))$.
Accordingly, $f\circ\iotaonesigma
=\left(\varpi(r,v\Adj(\sigma))\circ\iotaonesigma\right)^\nu
h_1\circ\iotaonesigma$ on the neighborhood $U_o=\iotaonesigma\inv U_1$
of~$p_o$.  A small calculation gives $\varpi(r,v\Adj(\sigma))\circ\iotaonesigma
=\Delta/(c\tau+d)\cdot\varpi(r,v)$, and so, because
$\varpi(r,v\Adj(\sigma))\circ\iotaonesigma$ and $\varpi(r,v)$ differ
by a holomorphic unit on~$U_o$, we have $f\circ\iotaonesigma\in\mathcal F$.
This argument has also shown that
$$
\ord(f\circ\iotaonesigma,{V(r,v)})=\ord(f,\iotaonesigma(V(r,v)).
$$

\begin{lemma}
The curves of primitive $r$-torsion form one orbit under~$\PtwooneZ$.
\end{lemma}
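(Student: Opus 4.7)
The plan is to describe how the two types of generators of $\PtwooneZ$---the elements $\iota_1(\sigma)$ for $\sigma\in\SLtwoZ$ and the Heisenberg subgroup (ignoring $-1_4$, which acts trivially on $\UHPtwo$)---move the index $v$ of a primitive $r$-torsion curve $V(r,v)$, and then to exhibit an explicit three-step reduction sending every such $v$ to the reference vector $[1\ 0]$.

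The first action is already recorded in the excerpt: $\iota_1(\sigma)$ takes $V(r,v)$ to $V(r,v\Adj(\sigma))$, which for $\sigma\in\SLtwoZ$ is $V(r,v\sigma\inv)$. For the Heisenberg generators I would compute directly on $(\tau,z)$-coordinates. The matrix $\smallmat a 0 0 {a^*}$ with $a=\smallmat 1 0 \lambda 1$ acts on $\Omega$ as $\Omega\mapsto a\Omega a'$, giving $(\tau,z)\mapsto(\tau,z+\lambda\tau)$; and $\smallmat 1 b 0 1$ with $b=\smallmat 0 \mu \mu \kappa$ translates as $(\tau,z)\mapsto(\tau,z+\mu)$. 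A one-line substitution in the defining equation $rz=A\tau+B$ shows these take $V(r,[A\ B])$ respectively to $V(r,[A+r\lambda\ B])$ and $V(r,[A\ B+r\mu])$, so the Heisenberg contribution to the action on $v$ is exactly translation by $r\Z^2$.

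For transitivity, given a primitive vector $v=[A\ B]$ with $\gcd(r,A,B)=1$, set $g=\gcd(A,B)$; the primitivity assumption forces $\gcd(r,g)=1$. First, use the standard $\SLtwoZ$-action on integer row vectors to send $v$ to $[g\ 0]$. Second, translate by $r\cdot[0\ 1]$ via the Heisenberg to reach $[g\ r]$, which is a unimodular vector in $\Z^2$ precisely because $\gcd(g,r)=1$. Third, apply $\SLtwoZ$ once more to carry the unimodular vector $[g\ r]$ to $[1\ 0]$. This shows every primitive $r$-torsion curve lies in the $\PtwooneZ$-orbit of $V(r,[1\ 0])$, which is the claim.

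I expect the main obstacle to be bookkeeping rather than any genuine difficulty: one must keep the right-versus-left and adjoint-versus-inverse conventions straight when pulling back the $\PtwooneZ$-action through the definition of $V(r,v)$, so that the allowed moves on $v$ are correctly identified as the composition of $v\mapsto v\sigma\inv$ for $\sigma\in\SLtwoZ$ with $v\mapsto v+rw$ for $w\in\Z^2$. Once those moves are in hand the three-step reduction, and hence the transitivity claim, are immediate.
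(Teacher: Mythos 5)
Your proof is correct, and it rests on the same two facts as the paper's: the Heisenberg elements translate the label $v$ of $V(r,v)$ by $r\Z^2$, and $\iota_1(\sigma)$ sends $V(r,v)$ to $V(r,v\Adj(\sigma))$, so that the allowed moves on $v$ are right multiplication by $\SLtwoZ$ together with translation by $r\Z^2$; transitivity on $\{v:\gcd(r,v)=1\}$ is then an elementary reduction. The only real difference is in that reduction. The paper first replaces $v$ by a representative $[c\ d]\equiv v\bmod r$ with $\gcd(c,d)=1$ (citing Dirichlet's theorem on primes in arithmetic progressions as one way to produce it) and then applies a single matrix $\sigma=\smallmatabcd\in\SLtwoZ$, since $[c\ d]\Adj(\sigma)=[0\ 1]$. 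You instead normalize first with $\SLtwoZ$ to $[g\ 0]$ where $g=\gcd(A,B)$, shift by $r$ to the primitive vector $[g\ r]$ (using $\gcd(g,r)=1$, which follows from $\gcd(r,A,B)=1$), and finish with one more $\SLtwoZ$ move; this three-step version avoids any appeal to Dirichlet or a coprime-lift-mod-$r$ argument, at the cost of one extra move, and the discrepancy between your reference vector $[1\ 0]$ and the paper's $[0\ 1]$ is immaterial since both are primitive and the claim is a single-orbit statement.
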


\begin{proof}
The Heisenberg transformations $(\tau,z)\mapsto(\tau,z+\lambda\tau+\mu)$,
for $\lambda,\mu\in\Z$, send $V(r,v)$ to $V(r,v_1)$
with $v_1=v+r[\lambda\,\,\mu]\equiv v\bmod r$.
The negative identity fixes all torsion curves.
The $\iotaone \SLtwoZ$ subgroup acts by
$(\iotaonesigma)(V(r,v))=V(r,v\Adj(\sigma))$.
Thus the curves of primitive $r$-torsion are stable under $\PtwooneZ$
since it is generated by these three types.
To complete the proof we  map a general primitive $r$-torsion curve
$V(r,v)$ to $V(r,[0\,\,1])$.  

Use $\gcd(r,v)=1$ to select $[c \,\, d] \equiv v \bmod r$ with
$\gcd(c,d)=1$; one way to achieve this is to use Dirichlet's theorem
on primes in arithmetic progression.  Heisenberg transformations
show that $V(r,[c\,\,d])$ is in the orbit of $V(r,v)$.
Select $a,b \in \Z$ so that $\sigma=\smallmatabcd\in\SLtwoZ$.  Now
$(\iotaonesigma)(V(r,[c\,\,d]))=V(r,[c\,\,d]\Adj(\sigma))=V(r,[0\,\,1])$
is also in the orbit.  
\end{proof}

\begin{corollary}\label{cor3}
Let $m,k\in\frac12\Znn$ and $r \in \Zpos$.
If the divisor of a weakly holomorphic Jacobi form $g\in\Jwh {k}{m}(\chi)$ 
contains one curve of primitive $r$-torsion, 
then the divisor of $g$ contains all curves of 
primitive $r$-torsion, and $g/\atom r$ is holomorphic on $\UHP \times \C$.
\end{corollary}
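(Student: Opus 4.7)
The plan is to exploit the $\PtwooneZ$-equivariance of the divisor of any weakly holomorphic Jacobi form, to apply the transitivity from the preceding lemma, and then to compare the resulting divisor with the explicit divisor of~$\atom r$.

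First I would establish that for every $\gamma\in\PtwooneZ$ and every torsion curve $V=V(r,v)$, one has $\ord(g,\gamma(V))=\ord(g,V)$. The Jacobi transformation law for $\iotaone\SLtwoZ$ gives $g\circ\iotaonesigma$ equal to $g$ times a character value, the factor $j(\sigma,\tau)^{k}$, and $\e(mcz^{2}/j(\sigma,\tau))$; the Heisenberg relations give $g$ composed with the translation $(\tau,z)\mapsto(\tau,z+\lambda\tau+\mu)$ equal to $g$ times a character value and $\e(-m\lambda^{2}\tau-2m\lambda z)$. All of these factors are nowhere zero on $\UHP\times\C$, so $g\circ\gamma$ and $g$ have the same divisor. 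Since $-1_4$ fixes every torsion curve and $\PtwooneZ$ is generated by these three types of elements, the identity $\ord(f\circ\iotaonesigma,V)=\ord(f,\iotaonesigma(V))$ already derived in the text (and its obvious Heisenberg analog) propagates to the desired divisor equivariance.

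Next, under the hypothesis that $g$ vanishes to some order $\nu\ge1$ on a primitive $r$-torsion curve~$V_{0}$, the preceding lemma furnishes, for every primitive $r$-torsion curve~$V$, an element $\gamma\in\PtwooneZ$ with $V=\gamma(V_{0})$; equivariance then forces $\ord(g,V)=\ord(g,V_{0})=\nu\ge1$. Hence $\divv(g)$ contains every curve of primitive $r$-torsion, giving the first half of the corollary. For the second half, recall that $\atom r$ is holomorphic on $\UHP\times\C$ and has divisor $\sum_{v:\,\gcd(r,v)=1}V(r,v)$, each coefficient equal to one and supported only on primitive $r$-torsion. Along any primitive $r$-torsion curve~$V$, the quotient $g/\atom r$ has order $\ord(g,V)-1\ge0$; away from primitive $r$-torsion, $\atom r$ is nowhere zero, so $g/\atom r$ inherits the holomorphy of $g$. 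Thus $g/\atom r$ is holomorphic on $\UHP\times\C$.

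The step I expect to require the most care is the first: repackaging the several Jacobi transformation laws (including the character) into a genuine $\PtwooneZ$-divisor invariance, and verifying that no automorphy factor can introduce spurious zeros or poles along a torsion curve that would corrupt the order comparison.
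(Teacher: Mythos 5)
Your proposal is correct and follows essentially the same route as the paper: the paper's proof likewise invokes the preceding lemma that the primitive $r$-torsion curves form a single $\PtwooneZ$-orbit and then appeals to the $\PtwooneZ$-automorphy of $g$ to conclude that its divisor contains the whole orbit, which equals $\divv(\atom r)$, so $g/\atom r$ is holomorphic. You merely spell out in detail the divisor-equivariance step (nonvanishing automorphy factors plus the identity $\ord(f\circ\iotaonesigma,V)=\ord(f,\iotaonesigma(V))$) that the paper states in one line.
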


\begin{proof}
If the divisor of~$g$ contains one curve~$V(r,v_o)$ of primitive $r$-torsion, 
then the automorphy of $g$ with respect to $\PtwooneZ$ implies that the 
divisor of $g$ contains the entire $\PtwooneZ$-orbit
$$\sum_{v\in \Z^2 :\,\gcd(r,v)=1}V(r,v) = \divv(\atom r).$$
Thus $g/\atom r$ is holomorphic on $\UHP \times \C$. 
\end{proof}

The next lemma characterizes divisibility by a holomorphic theta block 
in the ring of weakly holomorphic Jacobi forms in terms of divisibility by its baby theta block 
in the ring of holomorphic functions.
\begin{lemma}\label{newlemmafour}
Let $k_1, k_2 \in \frac12\Z$, and $m_1, m_2 \in \frac12\Znn$.
Let $g\in\Jwh {k_1}{m_1}(\chi_1)$ be a weakly holomorphic Jacobi form
and let $\phi\in\Jwh {k_2}{m_2}(\chi_2)$ 
be a theta block with baby theta block~$b$.
The following equivalence holds:
$$
\frac{g}{\phi}\in\Jwh{k_1-k_2}{m_1-m_2}(\chi_1\chi_2\inv)
\quad\text{if and only if}\quad
\text{$\frac{g}{b}$ is holomorphic on $\UHP\times\C$}.
$$
\end{lemma}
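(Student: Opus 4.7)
The strategy is to compare the divisors of $\phi$ and $b$ on $\UHP\times\C$. Using the atom decomposition $\phi=\eta^{\nu(0)-\nu(1)}\prod_{r\ge1}\atom r^{\nu(r)}$, equation~\eqref{thetablockdivisor} gives
$$
\divv(\phi)=\sum_{r\ge1}\sum_{v:\,\gcd(r,v)=1}\nu(r)V(r,v),
$$
while $b(\zeta)=\zeta^{-B}\prod_{r\ge1}\Phi_r(\zeta)^{\nu(r)}$ has, viewed as a holomorphic function on $\UHP\times\C$ via $\zeta=\e(z)$, the divisor
$$
\divv(b)=\sum_{r\ge1}\sum_{B\in\Z:\,\gcd(r,B)=1}\nu(r)V(r,[0,B]),
$$
because the zeros of $\Phi_r$ are exactly the primitive $r$-th roots of unity. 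Thus $\divv(b)$ is precisely the slope-zero sub-divisor of $\divv(\phi)$.

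The forward direction is immediate. If $g/\phi\in\Jwh{k_1-k_2}{m_1-m_2}(\chi_1\chi_2\inv)$, then $g/\phi$ is holomorphic on $\UHP\times\C$, forcing $\divv(g)\ge\divv(\phi)$ curve by curve. Restricting this inequality to the horizontal curves $V(r,[0,B])$ in $\divv(b)$ gives $\divv(g)\ge\divv(b)$, so $g/b$ is holomorphic on $\UHP\times\C$.

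For the reverse direction, assume $g/b$ is holomorphic. Then $g$ vanishes on each $V(r,[0,B])$ with $\gcd(r,B)=1$ to order at least $\nu(r)$. The plan is to apply Corollary~\ref{cor3} iteratively. A first application, using that $V(r,[0,1])\subset\divv(g)$, shows that every primitive $r$-torsion curve lies in $\divv(g)$ and that $g/\atom r$ is a weakly holomorphic Jacobi form of adjusted weight, index, and character. If $\nu(r)\ge2$ then $g/\atom r$ still vanishes on $V(r,[0,1])$ (to order $\nu(r)-1\ge1$), so Corollary~\ref{cor3} reapplies. After $\nu(r)$ iterations, $g/\atom r^{\nu(r)}$ is a weakly holomorphic Jacobi form. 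Since atoms for distinct $r$ have disjoint divisors, the iterations for different~$r$ are independent and yield $g/\prod_{r\ge1}\atom r^{\nu(r)}$ as a weakly holomorphic Jacobi form. Finally, dividing further by the nowhere-vanishing factor $\eta^{\nu(0)-\nu(1)}$ preserves holomorphy on $\UHP\times\C$ while correctly adjusting weight, index, and character, giving $g/\phi\in\Jwh{k_1-k_2}{m_1-m_2}(\chi_1\chi_2\inv)$.

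The main obstacle is the reverse direction: divisibility by $b$ only records vanishing along the horizontal primitive torsion curves, whereas we need vanishing along every primitive $r$-torsion curve $V(r,v)$. The bridge is the $\PtwooneZ$-automorphy of~$g$, packaged in Corollary~\ref{cor3}; iterating that corollary is also what converts a single-multiplicity conclusion into one with the required multiplicities $\nu(r)$.
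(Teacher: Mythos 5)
Your proof is correct and follows essentially the same route as the paper's: both identify the zeros of the baby theta block with the horizontal primitive torsion curves (with multiplicities $\nu(r)$) and then use the $\PtwooneZ$-orbit result (Corollary~\ref{cor3}) together with the atom decomposition to pass from divisibility by $b$ to divisibility by $\phi$, the paper phrasing the iteration as an induction on the number of atoms. The only point worth adding a line for is that each intermediate quotient $g/\atom r$ is genuinely \emph{weakly} holomorphic, i.e.\ has finite principal part, because both $g$ and $\atom r$ do.
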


\begin{proof}
Assume that ${g}/{\phi}\in\Jwh {k_1-k_2}{m_1-m_2}(\chi_1\chi_2\inv)$.
Using the infinite product for the theta block~$\phi$, we have
$\phi(\tau,z)=b(\zeta)h_o(\tau,z)$ with $h_o$ holomorphic on $\UHP \times \C$.
Thus, ${g}/{b}=({g}/{\phi}) h_o$  is holomorphic, as asserted.

Assume that ${g}/{b}$ is holomorphic on $\UHP \times \C$.  
Let the decomposition of the holomorphic theta block~$\phi$ as a
product of atoms be  
$$
\phi(\tau,z)=\eta(\tau)^{\nu(0)-\nu(1)}
\prod_{r\ge1}\atom r(\tau,z)^{\nu(r)}, 
\quad \text{$\nu(r) \ge 0$ for $r \ge 1$.}
$$
By induction on the number of atoms, it is enough to prove the case where $\phi$ 
is a single atom. 
The base case when $m_2=0$ is simple because $b=1$ and $\phi=\eta^{\nu(0)}\in\Jwh {k}{0}(\chi)$ is a holomorphic unit 
with $k=\frac12 \nu(0)$ and $\chi=\epsilon^{\nu(0)}$.   
Therefore we may assume that the
theta block is $\phi=\atom r \in\Jwh {0}{m_2}(\chi_2)$ and the 
baby theta block is  $b(\zeta) = \zeta^{-\phi(r)/2}\Phi_r(\zeta)$.  
There is a point $(\tau_o,z_o) \in \UHP \times\C$ where $b(\zeta_o)=0$ for $\zeta_o =e(z_o)$; 
for example, we may take $z_o=1/r$.  
The curve $V(r,v_o)$ for $v_o=[0\, 1]$ passes through $(\tau_o,z_o) $ and 
$b(\zeta) = \varpi(r,v_o;\tau,z)b_o(\tau,z)$ for a holomorphic~$b_o$ with $b_o(\tau_o,z_o) \ne 0$ 
because $\zeta_o$ is a simple root of the cyclotomic polynomial.  
Since $g/b$ is holomorphic, the divisor of~$g$ includes $V(r,v_o)$.  
By Corollary~\ref{cor3}, 
$g/\atom r$ is  holomorphic on $\UHP \times \C$ 
and necessarily of weight~{$k_1$}, index~{$m_1-m_2$}, and multiplier $\chi_1\chi_2\inv$.  
Finally, $g/\atom r$ has a finite principal part because both $g$ and~$\atom r$ do.  
\end{proof}

Now we prove Theorem~\ref{whquotienttheorem}.  
Recall its statement: three conditions are equivalent,
$(1)$~$(\phi|V_2)/\phi$ is weakly holomorphic,
$(2)$~$b(\zeta)$ divides $b(\zeta^2)$ in $\LaurentC$,
and $(3)$~$\nu(r)\ge\nu(2r)$ for all $r\ge1$;
and $(\phi|V_2)/\phi\in\Jwh0\JFind(\Z)$ when these conditions hold.

\begin{proof}
By Lemma~\ref{newlemmafour},  $(\phi|V_2)/\phi$ is weakly holomorphic if and only if 
 $(\phi|V_2)/b$ is holomorphic.   The action of~$V_2$ is:  
\begin{equation}\label{veetwo}
(\phi|V_2)(\tau,z)=2^{k-1}\phi(2\tau,2z)
+\tfrac12\big(\phi(\tfrac12\tau,z)+\phi(\tfrac12\tau+\tfrac12,z)\big).
\end{equation}
The last two terms have the same baby theta block as~$\phi$,
and so their quotient by~$b$ is holomorphic on~$\UHP\times\C$.  
The baby theta block of the first term is~$b(\zeta^2)$, so if 
 $b(\zeta)$ divides $b(\zeta^2)$ in $\LaurentC$ then  $(\phi|V_2)/b$ is holomorphic.  
 
 On the other hand if  $(\phi|V_2)/b$ is holomorphic then $\phi(2\tau,2z)/b(\zeta)$ is holomorphic.  
 For any root of unity $\zeta_o=e(z_o)$ such that $b(\zeta_o)=0$, 
 the theta block product form \eqref{TBwithdoubleproduct} shows that
 this quotient is
$b(\zeta^2)/b(\zeta)$ times a holomorphic unit on some neighborhood
of any~$(\tau_o,z_o)$, and so any discontinuity of $b(\zeta^2)/b(\zeta)$
at~$\zeta_o$ is removable.  Thus $b(\zeta^2)/b(\zeta)$
is a rational function in $\zeta\in\C\setminus\{0\}$ without poles,
and hence it lies in~$\LaurentC$.  
Thus conditions $(1)$ and~$(2)$ are equivalent. 

We show that conditions $(2)$ and~$(3)$ are equivalent. 
We have
$$
b(\zeta)=\zeta^{-B}
\prod_{r\ge1}\Phi_r(\zeta)^{\nu(r)},
\text{ where }B=\frac12\sum_{r\ge1}\phi(r)\nu(r),
$$
and the elementary observation that $\Phi_r(X^2)=\Phi_{2r}(X)$ if $r$
is even and $\Phi_r(X^2)=\Phi_{2r}(X)\Phi_r(X)$ if $r$ is odd gives
\begin{align*}
b(\zeta^2)
&=\zeta^{-2B}\prod_{r\ge1}\Phi_{2r}(\zeta)^{\nu(r)}
\prod_{r\ge1\text{ odd}}\Phi_r(\zeta)^{\nu(r)}
=b(\zeta)\zeta^{-B}\prod_{r\ge1}\Phi_{2r}(\zeta)^{\nu(r)-\nu(2r)}.
\end{align*}
The equation $b(\zeta^2)/b(\zeta)
=\zeta^{-B}\prod_{r\ge1}\Phi_{2r}(\zeta)^{\nu(r)-\nu(2r)}$ 
then gives the claimed equivalence.  
Because the $\Phi_r$
have disjoint divisors and nonzero roots,
this product is holomorphic on $\C\setminus\{0\}$ if and only if 
$\nu(r)\ge\nu(2r)$ for all~$r\ge1$.  

If  $(\phi|V_2)/\phi$ is holomorphic then its Fourier expansion is given by 
formal division of Fourier expansions.  By equation~\ref{veetwo} 
and the first displayed expansion of~$\phi$ in
the statement of Theorem~\ref{whquotienttheorem}, if 
$b(\zeta)$ divides $b(\zeta^2)$  in $\LaurentC$, hence in $\LaurentZ$, then this quotient has integral coefficients and so 
$(\phi|V_2)/\phi \in\Jwh 0\JFind(\Z)$. 
\end{proof}

\subsection{Finding all basic theta blocks\label{sectionFind}}

Consider a fixed weight~$k$ and level~$N$, where we seek all Borcherds
products $f=\BL(\psi)$ in~$\SkKN$.
The Borcherds product algorithm receives parameters $c$ and~$t$, where
the leading $\xi$-power in~$f$ is~$\xi^{cN}$, and the leading $q$-power
of the leading theta block is $q^A$ with $A=c+t$.  Set $m=cN$.  The
algorithm needs to traverse all basic theta blocks in $\Jcusp km$ that
have leading $q$-power~$q^A$; these are the possible leading theta
blocks of the sought Borcherds products.

We first show how to search systematically for such basic theta blocks
without denominator, as this is faster and suffices for some purposes,
even if not for finding all Borcherds products.
Consider basic theta blocks in ``$\varphi$-form'' from
section~\ref{sectionThetDef}, with $\varphi$ understood to be
nonnegative on~$\Zpos$.  Thus we need $\varphi(0)=2k$ and
$\sum_{r\ge1}\varphi(r)=12A-k$, the sum being the number of
$\vartheta$'s in the basic theta block.  The index~$m$ condition is
$\sum_{r\ge1}r^2\varphi(r)=2m$.
The computer search seeks basic theta blocks
$\eta^{\varphi(0)}\prod_{i=1}^\ell(\vartheta_{x_i}/\eta)$ where
$\ell=12A-k$ and $\sum_{i=1}^\ell x_i^2=2m$ with
$x_1\ge\cdots\ge x_\ell\ge1$.
Given such an $\ell$-tuple of $x_i$-values, $\varphi(r)$ for~$r\ge1$
is the number of~$x_i$ that equal~$r$.

To search systematically for all basic theta blocks in $\Jcusp km$
that have leading $q$-power~$q^A$, including any such basic theta
blocks with denominator, consider basic theta blocks in ``$\nu$-form,''
since any holomorphic theta block must be a product of atoms.  
Thus we need $\nu(0)=2k$ and
$\nu(1)=12A-k$, this being the {\em net\/} number of $\vartheta$'s in
the basic theta block, counting denominator-$\vartheta$'s negatively.  The
index~$m$ condition is
$\sum_{r\ge2}\nu(r)r^2\prod_{p\mid r}(1-1/p^2)=2m-\nu(1)$.
The computer search seeks basic theta blocks
$\eta^{\nu(0)}(\vartheta/\eta)^{\nu(1)}\prod_{i=1}^s\atom{x_i}$
such that $\sum_{i=1}^sx_i^2\prod_{p\mid x_i}(1-1/p^2)=2m-\nu(1)$ and
$x_1\ge\cdots\ge x_s\ge2$.
Given such an $s$-tuple of $x_i$-values, $\nu(r)$ for~$r\ge2$
is the number of~$x_i$ that equal~$r$.
Here we don't have as tidy a bound on~$s$ as we had for~$\ell$ in the
previous paragraph, but the worst case, when all~$x_i$ are~$2$, is
$s=(2m-\nu(1))/3$.

Thus it is a finite algorithm to find all basic theta blocks for any
given $k$, $N$, $c$ and~$t$.  For each basic theta block found, we compute
its valuation \cite{gsz} (to be discussed in section~\ref{sectionVal})
to determine whether it is a Jacobi cusp form.

Our algorithm to find Borcherds products of weight~$k$ and level~$N$, 
where $A=c+t$ and $C=cN$, will search for basic theta blocks of
weight~$k$ and index~$m=C$ that are cusp forms.
The relations $12A-k=\nu(1)\ge0$ and $A=c+t$ give $(k-12c)/12\le t$.
The relations $\nu(1)=12A-k$ and $2m-\nu(1)\ge0$ give
$2cN\ge12t+12c-k$, so that $t\le(k-(12-2N)c)/12$.
For $N\le5$, the bounds
$$
c\ge1,
\qquad
\max\lset\frac{k-12c}{12},0\rset\le t\le\frac{k-(12-2N)c}{12}
$$
show that basic theta blocks relevant to the desired
Borcherds products can exist only in a discrete quadrilateral
of pairs~$(c,t)$.  We will make use of these bounds for $(k,N)=(46,4)$
in section~\ref{kN=464}.

\section{Borcherds products\label{sectionBorc}}

\subsection{Borcherds product theorem\label{sectionBPTh}}

The Borcherds product theorem, quoted here from~\cite{psysqfree},
is a special case of Theorem~3.3 of \cite{gpy15}, which
in turn is quoted from \cite{gn98,grit12} and relies on the work
of R.~Borcherds.

\begin{theorem}\label{BPthm}
Let $N$ be a positive integer.  Let $\psi\in\JzeroNwh$ be a weakly
holomorphic weight~$0$, index~$N$ Jacobi form, having Fourier expansion
$$
\psi(\tau,z)=\sum_{\substack{n,r\in\Z\\n\gg-\infty}}c(n,r)q^n\zeta^r
\quad\text{where }q=\e(\tau),\ \zeta=\e(\zeta).
$$
Define
\begin{alignat*}2
A&=\frac1{24}c(0,r)+\frac1{12}\sum_{r\ge1}c(0,r),
 &\quad 
B\phantom{_0}&=\frac12\sum_{r\ge1}r\,c(0,r),\\
C&=\frac12\sum_{r\ge1}r^2c(0,r),
 &\quad
D_0&=\sum_{n\le-1}\sigma_0(|n|)c(n,0).
\end{alignat*}
Suppose that the following conditions hold:
\begin{enumerate}
\item $c(n,r)\in\Z$ for all integer pairs $(n,r)$ such that $4nN-r^2\le0$,
\item $A\in\Z$,
\item $\sum_{j\ge1}c(j^2nm,jr)\ge0$ for all primitive integer
  triples $(n,m,r)$ such that $4nmN-r^2<0$ and $m\ge0$.
\end{enumerate}
Then for weight $k=\frac12c(0,0)$ and Fricke eigenvalue
$\epsilon=(-1)^{k+D_0}$, the Borcherds product $\BL(\psi)$ lies
in~$\MFs\KN^\epsilon$.
For sufficiently large $\lambda$, for $\Omega=\smallmat\tau
zz\omega\in\UHPtwo$ and $\xi=\e(\omega)$, the Borcherds product has
the following convergent product expression on the subset
$\lset\Im \Omega>\lambda I_2\rset$ of~$\UHPtwo$:  
$$
\BL(\psi)(\Omega)=
q^A \zeta^B \xi^C 
\prod_{\substack{n,m,r\in \Z,\ m\ge0\\
\text{if $m=0$ then $n\ge0$}\\
\text{if $m=n=0$ then $r<0$}}}
(1-q^n\zeta^r\xi^{mN})^{c(nm,r)}.
$$
Also, let $\varphi(r)=c(0,r)$ for~$r\in\Znn$, and recall the
corresponding basic theta block,
$$
\TB(\varphi)(\tau,z)=\eta(\tau)^{\varphi(0)}
\prod_{r\ge1}(\vartheta_r(\tau,z)/\eta(\tau))^{\varphi(r)}
\qquad\text{where }\vartheta_r(\tau,z)=\vartheta(\tau,rz).
$$
On $\lset\Im \Omega>\lambda I_2\rset$ the Borcherds product is a
rearrangement of a convergent infinite series,
$$
\BL(\psi)(\Omega)
=\TB(\varphi)(\tau,z)\xi^C\exp\left(-\Grit(\psi)(\Omega)\right).  
$$
\end{theorem}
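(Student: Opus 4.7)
The statement is the specialization to the paramodular setting of Borcherds' singular theta correspondence, as developed by Gritsenko--Nikulin \cite{gn98}. Its proof splits into two stages: first, verifying that $\BL(\psi)$ is a holomorphic paramodular form of weight~$k$ in $\MFs\KN^\epsilon$ with the stated convergent product expansion; second, recasting that product as $\TB(\varphi)\,\xi^C\exp(-\Grit(\psi))$. For the first stage I would translate $\KN$ to the orthogonal group of a rank-$5$ even lattice of signature $(2,3)$ via the accidental isomorphism with $\Sp(2)$, and apply Borcherds' regularized theta lift to~$\psi$. Hypotheses (1)--(3) are exactly Borcherds' input conditions: (1)~integrality of the singular Fourier coefficients makes the infinite product single-valued; (2)~$A\in\Z$ is integrality of the leading $q$-exponent; and (3)~nonnegativity of the aggregated multiplicities $\sum_{j\ge1}c(j^2nm,jr)$ is precisely the condition that each Humbert component of the divisor of $\BL(\psi)$ appears with nonnegative multiplicity, promoting the lift from meromorphic to holomorphic. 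The weight $k=c(0,0)/2$ comes from the standard weight formula for Borcherds lifts.

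\textbf{Product formula and exponential reformulation.} The infinite product expression on $\{\Im\Omega>\lambda I_2\}$ with the stated ordering conventions is the output of Borcherds' theorem itself. To obtain the exponential form, split the product into its $m=0$ part and its $m\ge1$ part. For the $m=0$ part, substitute the product expansions of $\eta$ and $\vartheta_r/\eta$ from Section~\ref{sectionThetDef} into $\TB(\varphi)$ with $\varphi(r)=c(0,r)$; using the weight-zero Jacobi symmetry $c(0,r)=c(0,-r)$ and reorganizing the exponents of $q$ and $\zeta$, the $m=0$ subproduct collapses to $\TB(\varphi)(\tau,z)\,\xi^C$, with $A$, $B$, $C$ matching, respectively, the leading $q$-exponent, the $\zeta$-exponent of the baby theta block, and the index of the basic theta block. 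For the $m\ge1$ part, take formal logarithms and expand $\log(1-x)=-\sum_{j\ge1}x^j/j$:
\begin{equation*}
\log\prod_{m\ge1,\,n,\,r}(1-q^n\zeta^r\xi^{mN})^{c(nm,r)}
=-\sum_{m\ge1}\sum_{n,r}\sum_{j\ge1}\frac{c(nm,r)}{j}\,q^{jn}\zeta^{jr}\xi^{jmN}.
\end{equation*}
Reindex $(j,n,r,m)\mapsto(a,an_0,ar_0,am_0)$ and invoke the Eichler--Zagier Fourier-coefficient formula $c_{\psi|V_\ell}(n,r)=\sum_{a\mid\gcd(n,r,\ell)}a^{-1}c(n\ell/a^2,r/a)$ for weight zero; the right side collapses term by term to $-\sum_{m\ge1}(\psi|V_m)(\tau,z)\,\xi^{mN}=-\Grit(\psi)(\Omega)$, yielding the announced identity.

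\textbf{Principal obstacle.} The most delicate point is the identification of the Fricke eigenvalue $\epsilon=(-1)^{k+D_0}$. Tracking the sign of $\BL(\psi)$ under the paramodular Fricke involution $\mu_N$ requires following the orthogonal reflection dual to $\mu_N$ through the regularized theta integral. The factor $(-1)^{D_0}$ emerges from the regularized contribution of the non-singular part $\sum_{n\le-1}c(n,0)q^n$ of $\psi|_{z=0}$; the divisor-counting sum $D_0=\sum_{n\le-1}\sigma_0(|n|)c(n,0)$ appears via the identity $\sum_{n\ge1}\sigma_0(n)q^n=\sum_{n\ge1}q^n/(1-q^n)$ used to package the regularized integral. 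Making this sign computation fully rigorous is the technical heart of Gritsenko--Nikulin \cite{gn98,grit12}; our proposal cites those sources for it rather than reconstructing the reflection-by-reflection calculation.
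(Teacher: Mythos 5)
The paper gives no proof of Theorem~\ref{BPthm}: it is quoted from \cite{psysqfree} as a special case of Theorem~3.3 of \cite{gpy15}, which in turn draws on \cite{gn98,grit12} and Borcherds' work, so there is no internal argument to compare against, and your proposal follows the same route by deferring the analytic core (convergence of the product, holomorphy via the nonnegative Humbert multiplicities of condition~(3), paramodular invariance, the weight formula, and the Fricke sign $(-1)^{k+D_0}$) to those same sources. The parts you do work out---identifying the $m=0$ subproduct together with $q^A\zeta^B$ as $\TB(\varphi)(\tau,z)$, and the $\log(1-x)$ expansion with the reindexing through the weight-zero $V_\ell$ coefficient formula collapsing the $m\ge1$ part to $\exp(-\Grit(\psi)(\Omega))$---are correct and consistent with how the cited sources obtain the exponential form.
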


In the theorem, the divisor of the Borcherds product $\BL(\psi)$ is a sum
of Humbert surfaces with multiplicities, the multiplicities
necessarily nonnegative for holomorphy.  Let $\KN^+$ denote the
supergroup of~$\KN$ obtained by adjoining the paramodular Fricke
involution.  The sum in item~(3) of the theorem is the multiplicity of
the following Humbert surface in the divisor,
$$
{\rm Hum}(4nmN-r^2,r)=\KN^+\lset\Omega\in\UHPtwo:
\ip\Omega{\smallmat{n}{r/2}{r/2}{mN}}=0\rset.
$$
This surface lies in~$\KN^+\bs\UHPtwo$.  As the notation in the
display suggests, this surface depends only on the discriminant
$D=4nmN-r^2<0$ and on~$r$, so that we may take a matrix with $m=1$,
and furthermore it depends only on the residue class of~$r$
modulo~$2N$; this result is due to Gritsenko and Hulek \cite{gh98}.
We use it to parametrize Humbert surfaces as ${\rm Hum}(D,r)$, taking
for each such surface a suitable $\smallmat{n}{\tilde r/2}{\tilde r/2}N$
with $4nN-\tilde r^2=D$ and $\tilde r=r\mymod2N$.  (The Humbert
surface in the previous display has other denotations, such as
$H_N(-D,r)$ in \cite{gpy15}.)  We note that condition~(1) in
Theorem~\ref{BPthm} imposes conditions upon the singular Fourier
coefficients of~$\psi$, while the Humbert  multiplicity condition~(3)
refers only to coefficients such that $D<0$.

A holomorphic cuspidal Borcherds product has a basic cuspidal leading
theta block.  Indeed, every nontrivial Fourier-Jacobi coefficient of a
paramodular cusp form  is a Jacobi cusp form with positive integral
index and weight.

\subsection{Borcherds product anatomy\label{sectionBLAanatomy}}

Consider a holomorphic Borcherds product~$f$, \ie, $f=\BL(\psi)$ for some
$\psi\in\JzeroNwh$, connoting that the singular coefficients of~$\psi$
are integral.  The source-form $\psi$ determines an even finitely supported
multiplicity function
$$
\varphi:\Z\lra\Z,\qquad\varphi(r)=\fcJ0r
\ \text{where }\psi(\tau,z)=\sum_{n,r}\fcJ nrq^n\zeta^r,
$$
which in turn determines a Jacobi form basic theta block,
\begin{align*}
\TB(\tau,z)&=\eta(\tau)^{\varphi(0)}
\prod_{r\ge1}\big(\vartheta_r(\tau,z)/\eta(\tau)\big)^{\varphi(r)}
=q^Ab(\zeta)(1-G(\zeta)q+\cdots),
\end{align*}
whose germ is the $q^0$-coefficient of~$\psi$
(noting that $\fcJ{0,-r}\psi=\fcJ{0,r}\psi$ for any positive integer~$r$),
$$
G(\zeta)=\varphi(0)+\sum_{r\ge1}\varphi(r)(\zeta^r+\zeta^{-r}).
$$
The initial $q$-exponent of the leading basic theta block is~$A$,
and the initial $\xi$-exponent of $\BL(\psi)$ is an integer multiple of~$N$,
$$
C=\frac12\sum_{r\ge1}r^2\varphi(r).
$$
Again with $V_\ell$ the index-raising Hecke operator of \cite{ez85},
though now extended to weakly holomorphic Jacobi forms
\cite{gn98}, the Gritsenko lift of~$\psi$ is
$$
\Grit(\psi)(\Omega)=\psi(\tau,z)\xi^N+(\psi|V_2)(\tau,z)\xi^{2N}
+(\psi|V_3)(\tau,z)\xi^{3N}+\cdots.
$$
The Borcherds product theorem gives the expansion
$$
f(\Omega)=\TB(\tau,z)\xi^C\operatorname{exp}(-\Grit(\psi)(\Omega))
=\TB(\tau,z)\xi^C(1-\psi(\tau,z)\xi^N+\cdots).
$$
This shows that, taking $C=\BPc N$, the first two Fourier--Jacobi
coefficients of the Borcherds product $f=\BL(\psi)$ are the basic theta
block determined by the source form~$\psi$, and the additive inverse
of that theta block multiplied by~$\psi$,
\begin{align*}
\phi_\BPc(f)(\tau,z)&=\TB(\tau,z),\\
\phi_{\BPc+1}(f)(\tau,z)&=-\psi(\tau,z)\TB(\tau,z).
\end{align*}
Thus, as in \cite{gn98}, we can read off the source weakly holomorphic
Jacobi form as the negative quotient of the first two Fourier--Jacobi
coefficients,
$$
\psi=-\phi_{\BPc+1}/\phi_\BPc.
$$
Also the Borcherds product expansion of~$f$ shows that
$F_{n,m}(\zeta)$ can be nonzero only for~$m\ge\BPc=C/N$.
By the involution condition $F_{m,n}(\zeta)=(-1)^k\epsilon F_{n,m}(\zeta)$,
also $F_{n,m}$ can be nonzero only for~$n\ge\BPc$.
The Borcherds Product Theorem result 
$(-1)^k\epsilon=(-1)^{D_0}$, shows that the symmetry or antisymmetry
of the Borcherds product is determined by the parity of~$D_0$.
Sometimes we can get $D_0$ quite early in a computation from 
a short initial expansion of~$\psi$.  

\subsection{Finitely many holomorphic Borcherds products have a given
  leading theta block\label{sectionFiniteLifts}}

We show that only finitely many holomorphic Borcherds products
$f=\BL(\psi)$ can have a given leading theta block~$\phi$, necessarily basic.  
This fact guarantees that a particular step
in our algorithm to find all Borcherds products is finite.  
Combined with section~\ref{sectionFind}, this also shows that each 
$\MwtKlevel kN$ contains only finitely many Borcherds products.  

Consider the partial order $\prec$ on~$\R^n$ such that $v\prec w$ if
$w-v\in\Rnn^n$ and $w\ne v$.
One readily shows that any sequence in~$\Znn^n$ contains a subsequence
that is increasing (possibly not strictly) at each coordinate, and
consequently any sequence in~$\Znn^n$ that takes infinitely many
values contains a subsequence that is strictly increasing in the
partial order.  In particular, any infinite set in~$\Znn^n$
contains two elements that are in order, $v_1\prec v_2$.

\begin{theorem}\label{finitelymanyBPthm}
Given a weight~$k$, a level~$N$, and a basic theta block $\phi\in \Jac{k}{cN}$,
at most finitely many weakly holomorphic Jacobi forms $\psi\in\JzeroNwh$
give rise to holomorphic Borcherds products $\BL(\psi)\in\MwtKlevel kN$ with
leading theta block~$\phi$.
\end{theorem}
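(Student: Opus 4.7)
The plan is to combine the partial-order observation just stated with a uniform bound on the support of the divisor of $\BL(\psi)$ expressed as a combination of Humbert surfaces.

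First I would fix $\phi$ of $q$-order $A$ and let $\mathcal{S}$ denote the set of $\psi\in\JzeroNwh$ producing a holomorphic $\BL(\psi)\in\MwtKlevel kN$ with leading theta block~$\phi$.  By Section~\ref{sectionBLAanatomy}, $\phi$ determines the germ of $\psi$, and hence fixes the coefficients $c(0,r)$ for every $r$; by Theorem~\ref{wt0integralitythm}, $\psi$ is then determined by its remaining singular Fourier coefficients.  Because the next Fourier--Jacobi coefficient $\phi_{c+1}(\BL(\psi))=-\phi\psi$ is a genuine holomorphic Jacobi form in the finite-dimensional space $\Jac{k}{(c+1)N}$, the product $\phi\psi$ is holomorphic at~$\infty$; comparing $q$-orders (and noting that the leading $q^A$ coefficient of $\phi$ is the nonzero baby theta block, so no cancellation can occur) forces the $q$-order $n_{\min}$ of $\psi$ to satisfy $n_{\min}\ge -A$.

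Next I would attach to each $\psi\in\mathcal{S}$ the Humbert multiplicity vector $\mu_\psi$ whose entry at a reduced pair $(D,r)$ with $D<0$ is the nonnegative integer $\sum_{j\ge1}c_\psi(j^2nm,jr)$, obtained from condition~(3) of Theorem~\ref{BPthm} applied to a representative primitive triple $(n,m,r)$ with $m\ge0$ and $4nmN-r^2=D$; by the Borcherds product formula these entries are the multiplicities of $\BL(\psi)$ on the Humbert surfaces $\mathrm{Hum}(D,r)$.  The bound $n_{\min}\ge-A$ together with the Heisenberg reduction of singular indices $(n,r)$ to those with $|r|\le N$ and $n<N/4$ confines the positions at which any $\psi\in\mathcal{S}$ can have a nonzero singular coefficient to a fixed finite set, and, consequently, the support of every $\mu_\psi$ lies in a common finite index set of pairs $(D,r)$ depending only on $A$ and $N$.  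Thus every $\mu_\psi$ is an element of one ambient $\Znn^n$.

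Finally, if $\mathcal{S}$ were infinite, the partial-order observation would supply distinct $\psi_\alpha,\psi_\beta\in\mathcal{S}$ with $\mu_{\psi_\alpha}\prec\mu_{\psi_\beta}$, so $\mathrm{div}(\BL(\psi_\beta))-\mathrm{div}(\BL(\psi_\alpha))$ is a nonzero effective divisor.  The ratio $\BL(\psi_\beta)/\BL(\psi_\alpha)$ is then a holomorphic paramodular form of weight~$0$ under~$\KN$, hence constant by the K\"ocher principle; matching leading Fourier--Jacobi coefficients (both equal to~$\phi$) forces this constant to be~$1$, so $\BL(\psi_\alpha)=\BL(\psi_\beta)$ and $\mu_{\psi_\alpha}=\mu_{\psi_\beta}$, contradicting the strict inequality.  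The main obstacle is the support-boundedness step: without confining the multiplicity vectors to a common finite-dimensional $\Znn^n$ the partial-order observation cannot be invoked, so the careful bookkeeping with the $q$-order comparison and the Heisenberg reduction is what really makes the argument work.
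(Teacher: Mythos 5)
Your proposal is correct and follows essentially the same route as the paper: a lower bound on the $q$-order of $\psi$ (you obtain $n_{\min}\ge-A$ from holomorphy of $\phi\psi$, the paper obtains $c-A$ from the involution conditions), reduction of the singular indices to $|r|\le N$ so that all Humbert multiplicity vectors live in one fixed $\Znn^h$, and then the partial-order argument combined with the observation that the quotient $\BL(\psi_\beta)/\BL(\psi_\alpha)$ is a holomorphic weight-zero paramodular form, hence constant because $\MwtKlevel0N=\C$. The only cosmetic difference is that the paper runs the quotient argument for $\preceq$, which also gives injectivity of $\psi\mapsto\mu_\psi$ (needed before one can extract a strictly ordered pair from an infinite family), whereas you state it only for the strict case; the identical constant-ratio argument covers the equality case at no extra cost.
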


The theorem doesn't give an effective upper bound on the number of
Borcherds products having a given leading theta block.  In practice,
we search for the finitely many weakly holomorphic Jacobi forms
$\psi\in\JzeroNwh$ having $q^0$-coefficient $G(\zeta)$ where $G$ is
the germ of the basic theta block, and having nonnegative Humbert
multiplicities.

\begin{proof}
The given basic theta block takes the form
$\phi(\tau,z)=q^\BPA b(\zeta)(1-G(\zeta)q+\cdots)$,
with $A$ a nonnegative integer since $\phi$ is a Jacobi form.  
Consider any weakly holomorphic
Jacobi form $\psi$ in $\JzeroNwh$ whose resulting Borcherds product is holomorphic
and has leading theta block~$\phi$, especially the weight of $\BL(\psi)$ is that of~$\phi$.  
The involution conditions give us $\BPc\le\BPA$, and so there is nothing to prove unless this
relation holds.
The $q$-order of~$\phi$ is~$A$ and the $q$-order of the second Fourier-Jacobi coefficient of $\BL(\psi)$ 
is, by the involution conditions $F_{m,n}=(-1)^k\epsilon\,F_{n,m}$, at least~$c$.  
Therefore~$\psi$, which is the additive inverse of the quotient, 
has $q$-order bounded below by~$c-A$.  
As discussed in section~\ref{sectionJaco}, consequently the
discriminant $D=4nN-r^2$ is bounded below for nonzero Fourier
coefficients $\fcJ{n,r}\psi$, and the Fourier coefficients for a given
discriminant~$D$ are determined by the Fourier coefficients for~$D$
such that furthermore $|r|\le N$.  Thus overall there are a finite
number of possibilities $(D,r)$ with $|r|\le N$ for the nonzero
singular Fourier coefficients of the~$\psi$ under consideration, and hence only a finite
collection of possible Humbert surfaces supported in the divisor of these $\BL(\psi)$.  Place these
Humbert surfaces in some order, so that their multiplicities form a
vector $v(\psi)\in\Znn^h$.  Consider any $\psi_1,\psi_2$ such that
$v(\psi_1)\preceq v(\psi_2)$, referring to the partial order discussed
just above.  This relation makes the quotient paramodular form
$\BL(\psi_2)/\BL(\psi_1)$ of weight~$0$ and level~$N$ holomorphic
because its multiplicity at each Humbert surface is nonnegative.  But
$\MwtKlevel0N=\C$, and so the two $\BL(\psi_i)$ are proportional,
making the two $\psi_i$ equal because they arise as the quotients of
the first two nonzero Fourier--Jacobi coefficients.  Overall, the
condition $\psi_1\prec\psi_2$ is impossible if $\BL(\psi_1)$ and
$\BL(\psi_2)$ are holomorphic and have leading theta block~$\phi$.
As observed just before this theorem, it follows that only finitely
many~$\psi$ can give rise to such a Borcherds product. 
\end{proof}

\section{Divisibility by a basic theta block\label{sectionDivi}}

\subsection{Existence of a determining truncation bound}
Let $k$ be an integer and let $m$ be a positive integer.
In the context of our algorithm, $m$ will be $(c+1)N$, where we seek
Borcherds products in $\MkKN$ having $\xi$-order~$cN$, but here $m$ is
general for simplicity.
The Jacobi cusp form space $\Jkmcusp$ has a basis of power series
in~$q$ whose coefficients are Laurent polynomials in~$\zeta$
over~$\Z$, elements of the form
$$
g(\tau,z)=\sum_{n=1}^\infty g_n(\zeta)q^n,\quad
\text{each }g_n(\zeta)\in\LaurentZ.
$$
We store determining truncations of the basis,
$$
\tilde g(\tau,z)=\sum_{n=1}^\nmax g_n(\zeta)q^n,\quad
\text{each }g_n(\zeta)\in\LaurentZ.
$$
Let $\phi\in\Jcusp k{m'}$ be a basic theta block, where $0<m'<m$,
having baby theta block~$b$,
$$
\phi(\tau,z)=q^Ab(\zeta)(1-G(\zeta)q+\cdots).
$$
In our algorithm, $m'$ will be~$cN$.
Consider a $\Jkmcusp$ element $g(\tau,z)$ as displayed above, but no
longer assumed to be an element of the basis whose truncations we store.
%
%
By Lemma~\ref{newlemmafour}, 
$g(\tau,z)$ is divisible by~$\phi(\tau,z)$ in the ring of weakly holomorphic Jacobi forms 
exactly when $g(\tau,z)$ is divisible by~$b(\zeta)$ in the ring of holomorphic functions. 
By the Fourier expansion in~$q$, this last condition is that each of the 
coefficients $g_n(\zeta)$ is divisible in $\LaurentC$ by~$b(\zeta)$. 
This section shows that for a computable $\nmax=\nmax(b)$,
checking the divisibility of all the coefficients $g_n(\zeta)$
by~$b(\zeta)$ reduces to checking the divisibility for
$n=1,\dotsc,\nmax$.  That is, the divisibility by~$\phi$ of any linear combination of basis elements
$g$ can be checked using only their truncations $\tilde g$
out to this~$\nmax$.

The existence of $\nmax$ is immediate.  The subspace of $\Jkmcusp$
elements that are divisible by~$b(\zeta)$ is the nested intersection
$$
\lset g\in\Jkmcusp:b(\zeta)\mid g(\zeta)\rset
=\bigcap_{\ell \ge1}\lset g\in\JkNcusp:
b(\zeta)\mid g_n(\zeta)\text{ for }n=1,\dotsc,\ell \rset,
$$
with the divisibility of $g_n(\zeta)$ by~$b(\zeta)$ being checked
in $\LaurentC$.
The sequence of spaces being intersected stabilizes because their
dimensions form a decreasing sequence in~$\Znn$, and so there exists a
least integer $\nmax$ as sought,
$$
\lset g\in\JkNcusp:b(\zeta)\mid g(\zeta)\rset
=\lset g\in\JkNcusp:b(\zeta)\mid g_n(\zeta)
\text{ for }n=1,\dotsc,\nmax\rset.
$$
We want to compute an upper bound of this $\nmax$  
uniformly in terms of $b$.  

\subsection{Laurent polynomial division\label{sectionLaur}}

Before addressing the problem, we state a  division algorithm for~$\LaurentZ$.

\begin{proposition}[Laurent polynomial division algorithm\label{lpdprop}]
Consider two Laurent polynomials $a(\zeta),b(\zeta)\in\LaurentZ$
with $b(\zeta)$ nonzero and its highest and lowest powers of~$\zeta$
having invertible coefficients;
that is, $b(\zeta)=\zeta^{-\beta}\tilde b(\zeta)$ with $\beta\in\Z$
and $\tilde b(\zeta)\in\Z[\zeta]$ a true polynomial of the form
$\tilde b(\zeta)=\sum_{i=0}^d\tilde b_i\zeta^i$ where $\tilde b_0=\pm1$
and $\tilde b_d=\pm1$.
There exist a unique Laurent polynomial $Q(\zeta)\in\LaurentZ$
and a unique polynomial $R(\zeta)\in\Z[\zeta]$ such that
$$
a(\zeta)=Q(\zeta)b(\zeta)+R(\zeta),\quad\deg R<\deg\tilde b.
$$
\end{proposition}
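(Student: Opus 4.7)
The plan is to establish existence by an iterative reduction that peels off ``bad'' extremal terms of $a(\zeta)$ one at a time, and then to establish uniqueness via a support-length argument. The hypothesis that both the leading and trailing coefficients of $\tilde b(\zeta)$ are $\pm1$ (hence units in $\Z$) is the crucial input that keeps everything inside $\LaurentZ$.

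For existence, I would write any Laurent polynomial as $a(\zeta)=\sum a_i\zeta^i$, call an index $i$ \emph{bad} if $i<0$ or $i\ge d$, and introduce the two nonnegative integers
$$
N_1(a)=\max\lset 0,-\min\{i:a_i\ne0\}\rset,\qquad
N_2(a)=\max\lset 0,\max\{i:a_i\ne0\}-(d-1)\rset,
$$
so that $a\in\Z[\zeta]$ with $\deg a<d$ iff $N_1(a)=N_2(a)=0$. The iterative step: if $a$ has a nonzero coefficient at the lowest index $i=-N_1(a)<0$, subtract $\pm a_i\zeta^{i+\beta}\,b(\zeta)$ from $a$, using $\tilde b_0=\pm1$ to kill the $\zeta^i$ term with an integer multiplier; the support of this subtracted multiple lies in $\{i,\dots,i+d\}\subseteq\{i,\dots,d-1\}$, so $N_1$ drops by at least one and $N_2$ does not increase. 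Symmetrically, if $a$ has a nonzero coefficient at the highest index $j=d-1+N_2(a)\ge d$, subtract $\pm a_j\zeta^{j-d+\beta}\,b(\zeta)$, using $\tilde b_d=\pm1$; the support of this subtracted multiple lies in $\{j-d,\dots,j\}\subseteq\{0,\dots,j\}$, so $N_2$ drops by at least one and $N_1$ does not increase. Hence $N_1+N_2$ strictly decreases at each step, the procedure terminates, and accumulating the subtracted multipliers gives $Q\in\LaurentZ$ with $a-Qb=R\in\Z[\zeta]$ of degree less than~$d$.

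For uniqueness, assume $a=Q_1b+R_1=Q_2b+R_2$ with both decompositions of the required form, and set $Q=Q_1-Q_2\in\LaurentZ$ and $R=R_2-R_1\in\Z[\zeta]$ with $\deg R<d$. Then $Qb=R$. Suppose toward contradiction that $Q\ne0$, with nonzero coefficients at indices $\ell_1\le\ell_2$. Since the lowest and highest coefficients of $\tilde b$ are $\pm1$ and hence nonzero, the lowest index of $Q\tilde b$ is $\ell_1$ and the highest is $\ell_2+d$, with both coefficients nonzero. Then $Qb=\zeta^{-\beta}Q\tilde b$ has nonzero coefficients at indices $\ell_1-\beta$ and $\ell_2+d-\beta$. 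For this to equal an element of $\Z[\zeta]$ of degree $<d$ we would need $\ell_1-\beta\ge0$ and $\ell_2+d-\beta\le d-1$, i.e.\ $\beta\le\ell_1\le\ell_2\le\beta-1$, a contradiction. Hence $Q=0$ and then $R=0$, giving uniqueness.

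The main obstacle is simply verifying that the peeling procedure stays in $\Z$ and terminates; both points reduce to the invertibility in $\Z$ of the two endpoint coefficients of $\tilde b$ together with the combinatorial fact that subtracting $a_i\zeta^{i+\beta}b$ (resp.\ $a_j\zeta^{j-d+\beta}b$) when $i<0$ (resp.\ $j\ge d$) introduces only coefficients within the already-``fixed'' index range, so the two one-sided reductions do not interfere. Once this is in hand, both existence and uniqueness fall out cleanly without any appeal to a larger ring such as $\LaurentQ$.
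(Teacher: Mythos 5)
Your argument is correct. The paper states this proposition without a formal proof, only remarking afterward that its implemented algorithm first performs ordinary polynomial division of $\tilde a$ by $\tilde b$ (valid over $\Z$ because $\tilde b_d=\pm1$) and then clears the principal part of the scaled remainder term by term using $\tilde b_0=\pm1$; your existence argument rests on exactly the same two-unit-coefficient idea, merely organized as a simultaneous two-sided peeling controlled by the decreasing measure $N_1+N_2$ rather than as two successive passes, and both versions work. Your uniqueness argument, reading off the lowest and highest nonzero coefficients of $Q\tilde b$ and showing the resulting support constraints $\beta\le\ell_1\le\ell_2\le\beta-1$ are contradictory, is sound and supplies a detail the paper leaves implicit.
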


In practice, our implemented Laurent polynomial division algorithm
starts by scaling the result of the division theorem for the true
polynomials $\tilde a$ and~$\tilde b$ to get an initial remainder of
the form $\zeta^{-\alpha}R(\zeta)$, and then it translates the initial
remainder by a succession of Laurent polynomials $\zeta^{j}c_j\tilde b(\zeta)$
for $j=-\alpha,-\alpha+1,\dotsc,-1$ to eliminate its principal part and
leave a true polynomial remainder of degree less than~$\deg\tilde b$.

\subsection{Construction of a determining truncation bound}

As above, given a weight $k$ and an index~$m$, and given a basic theta block
$\phi(\tau,z)\in\Jcusp k{m'}$ where $0<m'<m$, having baby theta block
$b(\zeta)$, we want to determine an integer $\nmax=\nmax(b)$
such that any weight~$k$, index~$m$ Jacobi cusp form
$g(\tau,z)=\sum_{n\ge1}g_n(\zeta)q^n$ in $\LaurentZ[[q]]$ is
divisible by~$b(\zeta)$ if $g_n(\zeta)$ is divisible by~$b(\zeta)$
for $n=1,\dotsc,\nmax$.

For completeness, we quickly review a standard result.

\begin{lemma}[Valence inequality for subgroups\label{Gammavalence}]
Let $k$ be an integer.  Let $\Gamma$ be a congruence subgroup
of~$\SLtwoZ$ 
with index $I=[\SLtwoZ:\Gamma]$.
For any elliptic modular form $f\in\MFs\Gamma$, having Fourier
expansion $f(\tau)=\sum_{r\in\Qnn}\fc rfq^r$,
$$
\text{$\fc rf=0$ for all $r\le\frac{kI}{12}$}
\implies
f=0.
$$
\end{lemma}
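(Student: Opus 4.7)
The plan is to reduce to the classical valence inequality on~$\SLtwoZ$ by introducing the norm
$$
N(f)=\prod_{\gamma}f\wtk\gamma,
$$
the product taken over a set of $I$ coset representatives~$\gamma$ for~$\Gamma\bs\SLtwoZ$.  For any $\sigma\in\SLtwoZ$, right multiplication by~$\sigma$ permutes the cosets and satisfies $(f\wtk\gamma)\wtk\sigma=f\wtk{\gamma\sigma}$, so rearranging factors gives $N(f)\wtvar\sigma{kI}=N(f)$.  Each factor $f\wtk\gamma$ is holomorphic on~$\UHP$ and at the cusp~$\infty$ (equivalently, $f$ is holomorphic at the cusp $\gamma(\infty)$ of~$\Gamma$), and since $\smallmat1101\in\SLtwoZ$ the $q$-expansion of~$N(f)$ is in integer powers of~$q$.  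Thus $N(f)\in\MFswtgp{kI}{\SLtwoZ}$.

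Next I would bound the $q$-order of~$N(f)$ from below.  Each factor $f\wtk\gamma$ has nonnegative $q$-order at~$\infty$, and the identity coset contributes the factor~$f$ itself, so
$$
v_\infty(N(f))=\sum_\gamma v_\infty(f\wtk\gamma)\ge v_\infty(f).
$$
The hypothesis that $\fc rf=0$ for all~$r\le kI/12$ gives $v_\infty(f)>kI/12$, and hence $v_\infty(N(f))>kI/12$.

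Finally I would invoke the classical $\SLtwoZ$ valence inequality: any nonzero weight-$w$ modular form on~$\SLtwoZ$ has $q$-order at most~$w/12$ at~$\infty$.  Applied to $N(f)\in\MFswtgp{kI}{\SLtwoZ}$, this forces $N(f)=0$, whereupon the integral-domain property of the ring of holomorphic functions on~$\UHP$ picks out some vanishing factor $f\wtk\gamma$; slashing by~$\gamma\inv$ then yields $f=0$.  The main care needed is in the norm construction itself---well-definedness up to reordering, $\SLtwoZ$-modularity of weight~$kI$, and holomorphy at~$\infty$---together with the $q$-order inequality in the middle step; the rest is a direct appeal to the standard $\SLtwoZ$ valence identity, and the parity case $kI$ odd is handled uniformly since then $\MFswtgp{kI}{\SLtwoZ}=0$.
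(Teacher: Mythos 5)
Your proposal is correct and follows essentially the same route as the paper: form the norm (symmetrization) $\prod_\gamma f\wtk\gamma$ over coset representatives, note it lies in $\MFswtgp{kI}{\SLtwoZ}$ with $q$-order exceeding $kI/12$ because the non-identity factors have nonnegative order, and conclude it vanishes by the level-one valence inequality, whence $f=0$. Your extra remarks (integer $q$-powers, the integral-domain step, the odd-weight case) just make explicit details the paper leaves implicit.
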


\begin{proof}
Let $\SLtwoZ=\bigsqcup_{i=1}^I\Gamma m_i$.  The symmetrization
${\rm N}f=\prod_{i=1}^If\wtk{m_i}$ lies in $\MFswtgp{Ik}\SLtwoZ$
and its support lies in the sum of the supports of the~$f\wtk{m_i}$,
so
$$
\sum_{i=1}^I\inf{\rm supp}(f\wtk{m_i})
=\inf\bigg(\sum_{i=1}^I{\rm supp}(f\wtk{m_i})\bigg)
\le\inf{\rm supp}({\rm N}f)
$$
Especially if $kI/12<\inf{\rm supp}(f)$
then also $kI/12<\inf{\rm supp}({\rm N}f)$,
and so ${\rm N}f=0$ by the valence inequality for $\SLtwoZ$.
It follows that $f=0$ as desired.
\end{proof}

We return to the question of divisibility.

\begin{proposition}\label{divbyrootsofunity}
Let an integral weight~$k$ and a positive integral index~$m$ be given.
Let $r$ be a positive integer, $s$ an integer, and $\nu$ a
nonnegative integer.
Let $\Gamma_1(r)$ be the usual congruence subgroup of~$\SLtwoZ$
from elliptic modular forms, whose elements satisfy the entrywise
congruence $\smallmatabcd=\smallmat1*01\mymod r$, and introduce
the group
$$
\Gamma(r,m)=\lset\smallmatabcd\in\Gamma_1(r):cm=0\mymod{r^2}\rset.
$$
For any Jacobi form $g\in\Jackm$, we have an equivalence of two
conditions, the second finite:
$$
\text{$(\zeta-\e(s/r))^\nu$ divides the Fourier expansion of~$g$
in $\C[\zeta,\zeta\inv][[q]]$}
$$
if and only if
$$
\sum_{\rho\in\Z}\rho^j\fcJ{n,\rho}g\,\e(\rho s/r)=0,\quad
0\le j<\nu,\ 0\le n\le\tfrac{k+j}{12}\,[\SLtwoZ:\Gamma(r,m)].
$$
\end{proposition}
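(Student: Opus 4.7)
The plan is to recast divisibility in $\C[\zeta,\zeta\inv][[q]]$ as the vanishing of a finite sequence of modular forms on the congruence subgroup $\Gamma(r,m)$, and then apply Lemma~\ref{Gammavalence} coefficient-by-coefficient. First I would restate condition~(A): writing $g_n(\zeta)=\sum_\rho \fcJ{n,\rho}{g}\zeta^\rho$, the condition $(\zeta-\e(s/r))^\nu \mid g_n(\zeta)$ in $\LaurentC$ says that $g_n$ and its first $\nu-1$ derivatives vanish at $\zeta_0=\e(s/r)$. Since $\{\rho(\rho-1)\cdots(\rho-j+1)\}_{j=0}^{\nu-1}$ and $\{\rho^j\}_{j=0}^{\nu-1}$ span the same space of functions of $\rho$, this is equivalent to
$$V_n(j):=\sum_{\rho\in\Z}\rho^j\,\fcJ{n,\rho}{g}\,\e(\rho s/r)=0, \qquad 0\le j<\nu,\ n\ge0.$$
Thus condition~(A) is the vanishing of each $q$-series $h_j(\tau):=\sum_{n\ge0}V_n(j)\,q^n$, while condition~(B) restricts the vanishing to $0\le n\le (k+j)[\SLtwoZ:\Gamma(r,m)]/12$.

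The central step is to show that, inductively modulo the lower $h_i$, each $h_j$ is a modular form of weight $k+j$ on $\Gamma(r,m)$. I would consider $f(\tau,w):=g(\tau,s/r+w)$ and its Taylor expansion $f(\tau,w)=\sum_{j\ge0}(2\pi i w)^j h_j(\tau)/j!$. Combining the Jacobi transformation $g(\gamma\tau,z/(c\tau+d))=(c\tau+d)^k\e(mcz^2/(c\tau+d))g(\tau,z)$ with the quasi-periodicity $g(\tau,z+\lambda\tau+\mu)=\e(-m\lambda^2\tau-2m\lambda z)g(\tau,z)$, using the shifts $\lambda=sc/r$ and $\mu=s(1-a)/r$ (which are integers precisely because $r\mid c$ and $r\mid a-1$ in $\Gamma_1(r)$), a direct calculation should yield, for any $\gamma=\smallmatabcd\in\Gamma_1(r)$,
$$f(\gamma\tau,w/(c\tau+d))=(c\tau+d)^k\,\e(-ms^2ac/r^2)\,\e(mcw^2/(c\tau+d))\,f(\tau,w).$$
The parasitic root of unity $\e(-ms^2ac/r^2)$ is trivial for all $s$ exactly when $r^2\mid mc$, which under $r\mid c$ is the condition $cm\equiv0\bmod r^2$ that cuts $\Gamma(r,m)$ out of $\Gamma_1(r)$. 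So $f$ satisfies a Jacobi-form-style transformation law on $\Gamma(r,m)$.

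Expanding $\e(mcw^2/(c\tau+d))$ as a power series in $w^2$ and matching coefficients of $w^j$ on the two sides produces a quasi-modular identity
$$h_j(\gamma\tau)=(c\tau+d)^{k+j}h_j(\tau)+\sum_{\ell\ge1}\kappa_\ell\,(c\tau+d)^{k+j-\ell}h_{j-2\ell}(\tau)$$
with explicit scalars $\kappa_\ell$ depending on $m,c,j,\ell$. The induction on $j$ closes cleanly: $h_0$ is modular of weight $k$ on $\Gamma(r,m)$ outright, and once $h_0=\cdots=h_{j-1}=0$ the correction terms vanish, leaving $h_j\in\MFswtgp{k+j}{\Gamma(r,m)}$. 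Holomorphy at all cusps of $\Gamma(r,m)$, needed for Lemma~\ref{Gammavalence}, follows from the full $\SLtwoZ$-invariance of the Jacobi form $g$: conjugating the Taylor extraction by a cusp representative $\sigma\in\SLtwoZ$ amounts to extracting a Taylor coefficient of $g$ itself at a transformed torsion abscissa, which again has a genuine nonnegative $q$-expansion because $g$ is holomorphic on $\UHP\times\C$. Lemma~\ref{Gammavalence} then gives $h_j=0$ iff $V_n(j)=0$ for $0\le n\le (k+j)[\SLtwoZ:\Gamma(r,m)]/12$, and chaining the induction from $j=0$ through $j=\nu-1$ finishes both directions of the equivalence.

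The main obstacle is the transformation calculation: one must track the interplay of the three exponential factors---the Jacobi factor for $g$, the quasi-periodicity shift by $(\lambda,\mu)=(sc/r,\,s(1-a)/r)$, and the resulting elliptic correction in $w$---carefully enough to isolate $\Gamma(r,m)$ as precisely the subgroup of $\Gamma_1(r)$ where the parasitic unit $\e(-ms^2ac/r^2)$ is trivial, and to see that the surviving correction $\e(mcw^2/(c\tau+d))$ contributes only $h_{j-2\ell}$ for $\ell\ge1$ on the right-hand side, which is what makes the induction work. The subsidiary cusp-holomorphy point warrants some care too, though it follows fairly directly from the $\SLtwoZ$-invariance of $g$.
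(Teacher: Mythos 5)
Your proposal is correct and follows essentially the same route as the paper's proof: translate $g$ by the torsion abscissa $s/r$, show that the Taylor coefficients in the elliptic variable are (when the lower ones vanish) modular forms of weight $k+j$ on $\Gamma(r,m)$, holomorphic at the cusps because $g$ is, and then invoke Lemma~\ref{Gammavalence} for that subgroup. The only difference is one of packaging: the paper identifies $\Gamma(r,m)$ by conjugating the invariance group of $\tilde g$ inside $\PtwooneQ$ by the Heisenberg translation and argues with the leading Taylor coefficient, whereas you verify the same transformation law on $\Gamma_1(r)$ by an explicit cocycle computation (your parasitic unit $\e(-ms^2ac/r^2)$ equals $\e(-ms^2c/r^2)$ under the $\Gamma_1(r)$ congruences and is trivial when $r^2\mid mc$) together with a quasi-modular ladder and induction on $j$ --- the two arguments are the same in substance.
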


\begin{proof}
Because the proof will use the translated function $g(\tau,z+s/r)$, a
Jacobi form under a subgroup, we first discuss such Jacobi forms in
general.
Thus consider any subgroup $G$ of~$\PtwooneQ$ commensurable with $\PtwooneZ$.
Consider also any Jacobi form $h\in\Jackm(G)$, by which we mean a
holomorphic function $h:\UHP\times\C\lra\C$ such that the associated
function
$$
\JFtoSMFm h:\UHPtwo\lra\C,\qquad
(\JFtoSMFm h)(\smallpvar)=h(\tau,z)\,\e(m\omega)
$$
transforms under~$G$ as a Siegel modular form of weight~$k$,
and $(\JFtoSMFm h)\wtk\gamma$ is bounded on $\lset\Im\Omega>Y_o\rset$ for
any $\gamma\in\PtwooneQ$ and any positive $2\times2$ real matrix~$Y_o$.

Recall the map $\iota_1:\SLgp(2)\lra\Ptwoone$
from section~\ref{sectionParaDefs}.
We show, following \cite{ez85}, that the leading coefficient of the
Taylor expansion of~$h$ about~$z=0$ is an elliptic modular form
under the group
$$
\Gamma=\lset\gamma\in\SLtwoZ:\iota_1(\gamma)\in G\rset.  
$$
Suppose that this Taylor expansion vanishes to order at least $\nu$,
$$
h(\tau,z)=\sum_{j\ge\nu}\chi_j(h;\tau) z^j.    
$$
From the Jacobi form transformation law
$$
h(\gamma(\tau),z/j(\gamma,\tau))
=j(\gamma,\tau)^k\,\e(mcz^2/j(\gamma,\tau))h(\tau,z),
\quad\gamma=\smallmatabcd\in\Gamma, 
$$
the leading Taylor coefficient of~$h$ transforms as a modular form of
weight $k+\nu$ under~$\Gamma$,
$$
\chi_\nu(h;\gamma(\tau))=j(\gamma,\tau)^{k+\nu}\chi_\nu(h;\tau),
\quad\gamma\in\Gamma.
$$
Also, the leading coefficient is holomorphic at the cusps, inheriting
this property from~$\JFtoSMFm h$, as can been seen from the extended
Cauchy integral formula, and so truly $\chi_\nu(h)\in\MFswtgp{k+\nu}\Gamma$.
This result also follows from Theorem~3.2 in~\cite{ez85}, but the
elementary proof given here is available under our more specific
circumstances.

We show that the Taylor expansion of $h$ at $z=0$ vanishes to
order at least~$\nu$ if and only if  
$$
\sum_{\rho\in\Q}\rho^j\fcJ{n,\rho}h=0,\quad
0\le j<\nu,\ 0\le n\le\tfrac{k+j}{12}\,[\SLtwoZ:\Gamma].
$$
Indeed, the sum in the display is the scaled Fourier coefficient
$j!(2\pi i)^{-j}\fc n{\chi_j(h)}$.
Thus, if $\chi_j(h)=0$ for $j<\nu$ then the sum is~$0$ for such~$j$
and for all~$n\in\Qnn$.
For the other implication, assume the displayed condition.
If $\chi_j(h)\ne0$ for some minimal $j<\nu$, then
$\chi_j(h)\in\MFswtgp{k+j}\Gamma$ by the previous paragraph, and now
the displayed condition and the valence inequality combine to show
that $\chi_j(h)=0$ after all.  Thus the Taylor expansion of~$h$
at~$z=0$ vanishes to order at least~$\nu$.
In a moment, this proof of Proposition~\ref{divbyrootsofunity} will
specialize $G$ and~$\Gamma$ to groups such that the discussion in this
paragraph can take $\rho\in\Z$ and~$n\in\Znn$.

We return to the proposition, in which a Jacobi form
$g\in\Jackm=\Jackm(\Ptwoone(\Z))$ is given.
In fact, the function $(\JFtoSMFm g)(\Omega)=g(\tau,z)\e(m\omega)$
associated to~$g$ is $\wtk{P_m}$-invariant, where $P_m$ is the supergroup
of $\PtwooneZ$ obtained by adjoining the additional generator
$\smallmat1b01$ with $b=\smallmat000{1/m}$.  Let $h(\tau,z)=g(\tau,z+s/r)$,
having associated function $\JFtoSMFm h=(\JFtoSMFm g)\wtk t$ where
$t=\smallmat1b01$ with $b=\smallmat0{s/r}{s/r}0$; thus
$\JFtoSMFm h$ is $\wtk{t\inv P_mt}$-invariant.
The subgroup $G=t\inv P_mt$ of~$\PtwooneQ$ is commensurable with
$\PtwooneZ$, and the subgroup~$\Gamma$ of $\SLtwoZ$ taken by~$\iota_1$
into~$G$ is the group $\Gamma(r,m)$ given in the theorem.
Now, the condition
$$
\text{$(\zeta-\e(s/r))^\nu$ divides the Fourier expansion of~$g$
in $\C[\zeta,\zeta\inv][[q]]$}
$$
holds if and only if
$(z-s/r)^\nu$ correspondingly divides the Taylor expansion
$g(\tau,z)=\sum_j\chi_j(g;\tau)(z-s/r)^j$ in $\C[[q]][[z]]$, and
this is equivalent to the Taylor expansion
$h(\tau,z)=\sum_j\chi_j(g;\tau)z^j$ vanishing to order at least~$\nu$.
Because the Fourier coefficients of~$h$ are
$\fcJ{n,\rho}h=\fcJ{n,\rho}g\e(\rho s/r)$, the previous paragraph
shows that this last condition is
$$
\sum_{\rho\in\Z}\rho^j\fcJ{n,\rho}g\,\e(\rho s/r)=0,\quad
0\le j<\nu,\ 0\le n\le\tfrac{k+j}{12}\,[\SLtwoZ:\Gamma(r,m)].
$$
This completes the proof.
\end{proof}

Recall that basic theta blocks were introduced in section~\ref{sectionThet}.

\begin{theorem}\label{divboundcor}
Let $k$ be an integer and $m$ a positive integer.
Consider a Jacobi cusp form $g\in\Jcusp km$, having Fourier expansion
$$
g(\tau,z)=\sum_n g_n(\zeta)q^n,\quad g_n(\zeta)=\sum_r\fcJ{n,r}g\zeta^r.
$$
Let $m'<m$ be a nonnegative integer.
Consider a Jacobi cusp form basic theta block $\TB\in\Jcusp k{m'}$,
$$
\TB(\tau,z)=\eta(\tau)^{\nu(0)}
(\vartheta(\tau,z)/\eta(\tau))^{\nu(1)}
\prod_{r\ge2}\atom r(\tau,z)^{\nu(r)},
$$
where $\nu:\Znn\lra\Z$ is finitely supported and is nonnegative
on~$\Zpos$, and consider its baby theta block,
$$
b(\zeta)=\zeta^{-\frac12\sum_{r\ge1}\phi(r)\nu(r)}
\prod_{r\ge1}\Phi_r(\zeta)^{\nu(r)}\in\Z[\zeta,\zeta\inv],
$$
where $\phi$ is Euler's totient function and $\Phi_r$ is the $r$-th
cyclotomic polynomial.
Then we have an equivalence of two conditions, the second finite:
$$
\text{$b(\zeta)$ divides the Fourier expansion of~$g$
in~$\C[\zeta,\zeta\inv][[q]]$}
$$
if and only if (with $\Gamma(r,m)$ as in Proposition~\ref{divbyrootsofunity})
$$
\text{$b(\zeta)$ divides $g_n(\zeta)$ in~$\C[\zeta,\zeta\inv]$
for all $n\le\max_{\substack{r\ge1: \,\nu(r)>0}}
\tfrac{k+\nu(r)-1}{12}\,[\SLtwoZ:\Gamma(r,m)]$}.
$$
Furthermore,
these two conditions are implied by a third condition that is
independent of~$m$,
$$
\text{$b(\zeta)$ divides $g_n(\zeta)$ in~$\C[\zeta,\zeta\inv]$
for all $n\le\max_{\substack{r\ge1: \,\nu(r)>0}}
\tfrac{k+\nu(r)-1}{12}r^3\prod_{p\mid r}(1-1/p^2)$},
$$
taking the product over prime divisors of~$r$.
\end{theorem}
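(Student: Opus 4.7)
The plan is to factor~$b(\zeta)$ into cyclotomic polynomials and then into linear factors over~$\C$, apply Proposition~\ref{divbyrootsofunity} at each primitive root of unity to translate the resulting divisibility into finite coefficient conditions, and finally bound $[\SLtwoZ:\Gamma(r,m)]$ uniformly in~$r$ to obtain the clean bound in~(3). Because the Fourier expansion of~$g$ lies in $\LaurentC[[q]]$, the statement that $b(\zeta)$ divides it in that ring is simply that $b(\zeta)\mid g_n(\zeta)$ in~$\LaurentC$ for every~$n\ge1$. The given factorization $b(\zeta)=\zeta^{-\beta}\prod_{r\ge1}\Phi_r(\zeta)^{\nu(r)}$ splits further as $\zeta^{-\beta}\prod_{(r,s)}(\zeta-\e(s/r))^{\nu(r)}$ over pairs with $\nu(r)>0$ and $\gcd(s,r)=1\bmod r$; since these linear factors are pairwise coprime both within a fixed~$r$ and across distinct values of~$r$, $b(\zeta)\mid g_n(\zeta)$ in~$\LaurentC$ if and only if $(\zeta-\e(s/r))^{\nu(r)}\mid g_n(\zeta)$ for every such pair~$(r,s)$.

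Next I would invoke Proposition~\ref{divbyrootsofunity} with $\nu=\nu(r)$ at each pair $(r,s)$. That proposition characterizes $(\zeta-\e(s/r))^{\nu(r)}\mid g$ in $\LaurentC[[q]]$ by the vanishing of $\sum_\rho\rho^j\fcJ{n,\rho}{g}\,\e(\rho s/r)=0$ for $0\le j<\nu(r)$ and $0\le n\le\frac{k+j}{12}[\SLtwoZ:\Gamma(r,m)]$, with the slowest bound coming from $j=\nu(r)-1$. For any fixed~$n$ those same sums for $0\le j<\nu(r)$ vanish precisely when $(\zeta-\e(s/r))^{\nu(r)}\mid g_n(\zeta)$, because, up to an invertible triangular change of basis, they are the first $\nu(r)$ Taylor coefficients of~$g_n$ at the nonzero point~$\e(s/r)$. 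Taking the maximum over $r$ with $\nu(r)>0$ of the slowest bounds $\frac{k+\nu(r)-1}{12}[\SLtwoZ:\Gamma(r,m)]$ produces an~$N_0$ such that the hypothesis $b(\zeta)\mid g_n(\zeta)$ for all $n\le N_0$ supplies, for every pair $(r,s)$, the vanishings required by Proposition~\ref{divbyrootsofunity}; hence $(\zeta-\e(s/r))^{\nu(r)}\mid g$ in $\LaurentC[[q]]$, and multiplying over $(r,s)$ yields $b\mid g$. This gives (2)$\Rightarrow$(1); the reverse implication is immediate.

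For the implication from~(3), I would compute the group index. Using $[\SLtwoZ:\Gamma_1(r)]=r^2\prod_{p\mid r}(1-1/p^2)$, it suffices to show $[\Gamma_1(r):\Gamma(r,m)]=r/\gcd(r,m)$. Any $\gamma=\smallmatabcd\in\Gamma_1(r)$ has $c=rc'$ for some integer~$c'$; for two such elements $\gamma_1,\gamma_2$, a short calculation using $d_i\equiv1\bmod r$ and $c_i\equiv0\bmod r$ shows the $c$-entry of $\gamma_1\gamma_2\inv$ is congruent to $c_1-c_2$ modulo~$r^2$, so $\gamma_1\gamma_2\inv\in\Gamma(r,m)$ if and only if $c'_1\equiv c'_2\pmod{r/\gcd(r,m)}$, and every such residue is realized via the lower-triangular matrices $\smallmat10c1$ with $c\in r\Z$. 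Therefore $[\SLtwoZ:\Gamma(r,m)]=r^3\prod_{p\mid r}(1-1/p^2)/\gcd(r,m)\le r^3\prod_{p\mid r}(1-1/p^2)$, and the bound in~(3) dominates that in~(2). The main obstacle is this index calculation---in particular verifying well-definedness of the coset invariant $c'\bmod r/\gcd(r,m)$, which rests on $c_1d_2\equiv c_1\bmod r^2$ when $d_2\equiv1\bmod r$ and $c_1\equiv0\bmod r$, and its surjectivity; everything else is essentially packaging of Proposition~\ref{divbyrootsofunity} with the cyclotomic factorization of~$b$.
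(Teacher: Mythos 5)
Your proof is correct, and for the main equivalence it is the same argument as the paper's: factor $b(\zeta)$ (up to the unit $\zeta^{-\beta}$) into the coprime factors $(\zeta-\e(s/r))^{\nu(r)}$, note that divisibility of $g_n(\zeta)$ by such a factor is the vanishing of the Taylor data $\sum_\rho\rho^j\fcJ{n,\rho}g\,\e(\rho s/r)$ for $j<\nu(r)$, feed this into Proposition~\ref{divbyrootsofunity} with the worst case $j=\nu(r)-1$, and gather over all pairs $(r,s)$. Where you genuinely diverge is the final, $m$-independent bound: the paper simply observes the containment $\smallmat100r\Gamma(r)\smallmat100r\inv\subset\Gamma(r,m)$ and quotes the index $r^3\prod_{p\mid r}(1-1/p^2)$ of the principal congruence subgroup, whereas you compute $[\SLtwoZ:\Gamma(r,m)]$ exactly via the homomorphism $\Gamma_1(r)\to\Z/r\Z$, $\smallmatabcd\mapsto c/r$, obtaining $[\Gamma_1(r):\Gamma(r,m)]=r/\gcd(r,m)$ and hence $[\SLtwoZ:\Gamma(r,m)]=r^3\prod_{p\mid r}(1-1/p^2)/\gcd(r,m)$; your congruence check $c_1d_2-d_1c_2\equiv c_1-c_2\bmod r^2$ and the surjectivity via $\smallmat10{rc'}1$ are both right. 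Your route costs a short group-theoretic computation but buys the exact index, so it shows the second condition of the theorem actually holds with the sharper bound $\tfrac{k+\nu(r)-1}{12}\,r^3\prod_{p\mid r}(1-1/p^2)/\gcd(r,m)$; the paper's containment argument is quicker but only delivers the upper bound stated in the third condition.
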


\begin{proof}
We prove the nontrivial implication between the first two conditions.
Let $B$ denote the given bound,
$\max_{r\ge1,\nu(r)>0}\tfrac{k+\nu(r)-1}{12}[\SLtwoZ:\Gamma(r,m)]$.
Fix some $r\ge1$ such that $\nu(r)>0$ and some $s\in(\Z/r\Z)^\times$.
Suppose that $(\zeta-\e(s/r))^{\nu(r)}$ divides $g_n(\zeta)$
in~$\C[\zeta,\zeta\inv]$ for all~$n\le B$.
Consequently the Taylor series of $g_n(\zeta)$ at~$z=s/r$ vanishes to
order~$\nu(r)$ in $\C[[z-s/r]]$ for all~$n\le B$.
The $j$-th Taylor coefficient is
$\tfrac{(2\pi i)^j}{j!}\sum_\rho\rho^j\fcJ{n,\rho}g\,\e(\rho s/r)$,
so the vanishing condition implies that
$\sum_\rho\rho^j\fcJ{n,\rho}g\,\e(\rho s/r)=0$  
for all $j<\nu(r)$ and $n\le\tfrac{k+j}{12}[\SLtwoZ:\Gamma(r,m)]$.
Now Proposition~\ref{divbyrootsofunity} says that $(\zeta-\e(s/r))^{\nu(r)}$ 
divides the Fourier expansion of~$g$ in $\C[\zeta,\zeta\inv][[q]]$.
Gathering this result over all $r$ and~$s$ shows that if $b(\zeta)$
divides $g_n(\zeta)$ in~$\C[\zeta,\zeta\inv]$ for all~$n\le B$ then
$b(\zeta)$ divides the Fourier expansion of~$g$ in $\C[\zeta,\zeta\inv][[q]]$.

For the last statement of the theorem, let $\Gamma(r)$ denote the
usual principal congruence subgroup of~$\SLtwoZ$ from elliptic modular
forms, and recall that its index is $r^3\prod_{p\mid r}(1-1/p^2)$.
The containment $\smallmat100r\Gamma(r)\smallmat100r\inv\subset\Gamma(r,m)$
gives the result.
\end{proof}

For example, the basic theta block
$\TB(\tau,z)=\eta(\tau)^{92}(\vartheta_2(\tau,z)/\eta)^2$ lies in
$\Jcusp{46}4$.  To determine whether some~$g$ in $\Jcusp{46}8$
is divisible by the baby theta block $b$ of~$\TB$,
hence by~$\TB$,   
note that
the positive $\nu$-function values of~$\TB$ on positive integers are
$\nu(1)=\nu(2)=2$.  Thus the third condition in
Theorem~\ref{divboundcor} says that it suffices to check the
divisibility of~$g_n(\zeta)$ by~$b(\zeta)$ for~$n\le23$.

\section{Confirming a truncation\label{sectionConf}}

Let $k$ be an integer and  $\JFind$  a positive integer.
Consider a Laurent polynomial
$$
\tilde\psi(\tau,z)=\sum_{n=\nmin}^{\JFind/4}\psi_n(\zeta)q^n,\quad
\text{each }\psi_n(\zeta)\in\LaurentZ.
$$
This section shows that checking whether $\tilde\psi$ truncates any
element of $\Jwh0\JFind$ reduces to checking whether it truncates any
element of $\Jcusp{12i}\JFind/\Delta^i$, where
$\Delta=\eta^{24}\in\Jcusp{12}0$ is the discriminant function from
elliptic modular forms, and $i=i(\tilde\psi)$ is a computable
nonnegative integer.  Granting a suitable basis of $\Jcusp{12i}\JFind$,
the latter check is routine linear algebra.

\subsection{Valuation\label{sectionVal}}

Before addressing the problem, we review the valuation function of a
Jacobi form.  This topic is taken from \cite{gsz}.

\begin{definition}
The Jacobi valuation function
$$
\operatorname{ord}:\Jwh k\JFind\lra{\mathcal C}(\R/\Z)
$$
is
$$
\operatorname{ord}(g;x)=\min_{(n,r)\in{\rm supp}(g)}(n+rx+\JFind x^2),\quad
g\in\Jwh k\JFind,\ x\in\R.
$$
\end{definition}

Each Jacobi valuation function image $\operatorname{ord}(g)$
is known to be $\Z$-periodic,
continuous, and in fact piecewise quadratic.  The valuation takes
products to sums, \ie,
$\operatorname{ord}(g_1g_2)=\operatorname{ord}(g_1)+\operatorname{ord}(g_2)$.
The minimum of the valuation function of a
given~$g$ is denoted $\operatorname{Ord}(g)$ and, for index $\JFind>0$, we have
$$
\operatorname{Ord}(g)
=\min_x\operatorname{ord}(g;x)
=\min_{x,(n,r)\in{\rm supp}(g)}(n+rx+\JFind x^2)
=\min_{(n,r)\in{\rm supp}(g)}\frac{D(n,r)}{4\JFind}\,.
$$
Especially, $g$ is a Jacobi cusp form if and only if $\operatorname{Ord}(g)>0$.
The valuation function of the discriminant function~$\Delta$ is the
constant function~$1$, and so the valuation function of~$\Delta^i$ is
the constant function~$i$ for~$i\in\Znn$.

Our Borcherds product algorithm will compute values $\operatorname{Ord}(\psi)$
for weakly holomorphic Jacobi forms~$\psi$ of weight~$0$ and
index~$N$, with $N$ the level where we seek Borcherds products.
As in Theorem~\ref{wt0integralitythm}, any $\psi\in\Jwh0N$ is
determined by its truncation
$\tilde\psi(\tau,z)=\sum_{n=\nmin}^{N/4}\psi_n(\zeta)q^n$,
and this truncation suffices to compute $\operatorname{Ord}(\psi)$.

\subsection{Confirmation test}

As above, given a Laurent polynomial $\tilde\psi$, we want to test
whether it is the determining truncation of some $\psi\in\Jwh0\JFind$ by
testing instead whether it is the determining truncation of some
$\psi\in\Jcusp{12i}\JFind/\Delta^i$ for a suitable~$i$ that we can compute.
To illustrate our use of the minimum valuation function~$\operatorname{Ord}$, we first
review that
$$
\Jwh0\JFind=\bigcup_{i\in\Znn}\Jcusp{12i}\JFind/\Delta^i
\quad\text{(ascending union)}.
$$
Indeed, the containment $\Delta\Jcusp k\JFind\subset\Jcusp{k+12}\JFind$
gives the chain of containments
$$
0=\Jcusp0\JFind\subset\Jcusp{12}\JFind/\Delta
\subset\Jcusp{24}\JFind/\Delta^2\subset\Jcusp{36}\JFind/\Delta^3\subset\cdots,
$$
and each space $\Jcusp{12i}\JFind/\Delta^i$ lies in~$\Jwh0\JFind$ 
%
%
because~$\Delta$ is nonzero on~$\UHP$ and $\Delta^{-i}$ has finite principal part.  
To see that the union is all of~$\Jwh0\JFind$,
note that for every $g\in\Jwh0\JFind$ there is some $i\in\Znn$ such
that the valuation function
$\operatorname{Ord}(\Delta^ig)=i+\operatorname{Ord}(g)$ is positive;
thus $\Delta^ig$ lies in~$\Jcusp{12i}\JFind$, and so
$g\in\Jcusp{12i}\JFind/\Delta^i$.  
Furthermore, this argument shows that $g\in\Jcusp{12i}\JFind/\Delta^i$ as
soon as $i>-\operatorname{Ord}(g)$.

\medskip

Now it is clear how to test whether a given Laurent polynomial
truncates some weakly holomorphic Jacobi form of weight~$0$ and a
given index~$\JFind$.

\begin{proposition}\label{testLaurentTrunc}
Let a positive integer index~$\JFind$ be given, and let a Laurent
polynomial be given as follows,
$$
\tilde\psi(\tau,z)=\sum_{n=\nmin}^{\JFind/4}\psi_n(\zeta)q^n,\quad
\text{each }\psi_n(\zeta)\in\LaurentZ,\quad\psi_\nmin(\zeta)\ne0.
$$
Consider any $i\in\Znn$ such that $i>-D(n,r)/(4\JFind)$ for all
$(n,r)\in{\rm supp}(\tilde\psi)$.
Compute truncations of $\Jcusp{12i}\JFind$ basis elements
$g(\tau,z)\in q\LaurentZ[[q]]$,
$$
\tilde g(\tau,z)=q\sum_{n=0}^{\JFind/4+i-1}g_{1+n}(\zeta)q^n,\quad
\text{each }g_{1+n}(\zeta)\in\LaurentZ,
$$
and compute the corresponding truncation of~$\Delta^i$,
$$
\widetilde{\Delta^i}(\tau)=q^i\sum_{n=0}^{\JFind/4+i-1}\Delta_{i,i+n}q^n,\quad
\Delta_{i,i}=1,\text{ each }\Delta_{i,i+n}\in\Z,
$$
and compute their quotients to the same accuracy,
$$
(\widetilde{g/\Delta^i})(\tau,z)
=\sum_{n=1-i}^{\JFind/4}g'_n(\zeta)q^n,\quad
\text{each }g'_n(\zeta)\in\LaurentZ.
$$
Then $\tilde\psi$ is the truncation of some $\psi\in\Jwh0\JFind$ if and only if
$\tilde\psi$ lies in the $\Q$-span of the truncations $\widetilde{g/\Delta^i}$.
\end{proposition}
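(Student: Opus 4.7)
The plan is to exploit the ascending-union description $\Jwh0\JFind = \bigcup_{i \in \Znn} \Jcusp{12i}\JFind/\Delta^i$ reviewed just above the proposition. The hypothesis on~$i$ is engineered precisely so that any $\psi \in \Jwh0\JFind$ having truncation~$\tilde\psi$ is forced into $\Jcusp{12i}\JFind/\Delta^i$; granted this, the proposition becomes a finite linear-algebra statement inside an explicitly computable finite-dimensional space.

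For the forward direction, I would first observe that $\tilde\psi$ captures the entire singular part of~$\psi$: any $(n,r) \in \operatorname{supp}(\psi)$ with $D(n,r) \le 0$ reduces by the standard periodicity from section~\ref{sectionJaco} to one with $|r| \le \JFind$ and hence $n \le \JFind/4$, while the support of~$\psi$ at $n > \JFind/4$ satisfies $D > 0$ and so contributes only strictly positive values to~$\operatorname{Ord}(\psi)$. Combined with the hypothesis $i > -D(n,r)/(4\JFind)$ on $\operatorname{supp}(\tilde\psi)$, this yields $\operatorname{Ord}(\Delta^i \psi) = i + \operatorname{Ord}(\psi) > 0$, so $g := \Delta^i \psi$ lies in $\Jcusp{12i}\JFind$. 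Expanding~$g$ in the chosen basis, dividing by~$\Delta^i$, and truncating then expresses~$\tilde\psi$ as the claimed linear combination of the $\widetilde{g_j/\Delta^i}$. Conversely, any $\Q$-linear combination $\sum c_j \widetilde{g_j/\Delta^i}$ is, by $\Q$-linearity of truncation, the truncation of $\psi := \sum c_j g_j/\Delta^i \in \Jcusp{12i}\JFind/\Delta^i \subset \Jwh0\JFind$, and this truncation determines~$\psi$ by Theorem~\ref{wt0integralitythm}.

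Two points deserve care. The routine one is bookkeeping on truncation lengths: each $\widetilde{g_j/\Delta^i}$ must be accurate through $q^{\JFind/4}$, which holds because $\Delta^i$ equals $q^i$ times a unit in $1 + q\Z[[q]]$, so long division of the stored truncation of $g_j$ through $q^{\JFind/4 + i}$ by the stored truncation of $\Delta^i$ through $q^{\JFind/4 + 2i - 1}$ delivers the required accuracy. The main obstacle is the rationality issue in the forward direction, since $g = \Delta^i \psi$ is a priori only a complex element of $\Jcusp{12i}\JFind$ and the decomposition $g = \sum c_j g_j$ a priori has complex~$c_j$. I plan to resolve this using that the map $g \mapsto \widetilde{g/\Delta^i}$ on $\Jcusp{12i}\JFind$ is $\C$-linear and injective, by Theorem~\ref{wt0integralitythm} applied to the quotient, so the truncations $\widetilde{g_j/\Delta^i}$ are rational Laurent polynomials that are $\C$-linearly independent. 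The linear system $\tilde\psi = \sum c_j \widetilde{g_j/\Delta^i}$ then has rational coefficient matrix, rational right-hand side, and a unique complex solution, hence a rational solution, giving the claimed $\Q$-span.
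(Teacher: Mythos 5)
Your proof is correct and follows essentially the same route as the paper's: it places $\psi$ in $\Jcusp{12i}\JFind/\Delta^i$ via $\operatorname{Ord}(\Delta^i\psi)=i+\operatorname{Ord}(\psi)>0$ and then reduces to a linear-algebra statement using that the $\widetilde{g/\Delta^i}$ are determining (hence independent) truncations with rational coefficients, with your rationality argument spelling out what the paper leaves terse. One phrasing slip: the support of $\psi$ at $n>\JFind/4$ can contain pairs with $D\le0$ (large $|r|$), but since each such class has, by the periodicity you invoke, an equal-coefficient representative with the same discriminant inside $\operatorname{supp}(\tilde\psi)$, and since $i\ge0$ handles the $D>0$ part trivially, your conclusion $i+\operatorname{Ord}(\psi)>0$ still stands.
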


In the proposition we could instead compute the product                           
$\tilde\psi\,\widetilde{\Delta^i}$ to the same accuracy and check
whether it lies in the $\Q$-span of the truncations $\tilde g$.
However, the proposition is laid out to facilitate testing many
Laurent polynomials~$\tilde\psi$ for a given~$i$, computing the
quotients $\widetilde{g/\Delta^i}$ once each rather than the product
$\tilde\psi\widetilde\Delta^i$ for every~$\tilde\psi$.

\begin{proof}
($\implies$) Suppose that the given Laurent polynomial $\tilde\psi$
truncates some $\psi\in\Jwh0\JFind$.  Then
$$
\min_{(n,r)\in{\rm supp}(\tilde\psi)}D(n,r)
=\min_{(n,r)\in{\rm supp}(\psi)}D(n,r),
$$
and so $\psi\in\Jcusp{12i}\JFind/\Delta^i$ for the smallest $i\in\Znn$ such
that $i>-D(n,r)/(4\JFind)$ for all $(n,r)\in{\rm supp}(\tilde\psi)$.
Because $\psi_\nmin(\zeta)\ne0$, it follows that $i\ge1-\nmin$,
and so the truncations $\widetilde{g/\Delta^i}$ in the proposition
encompass a sum from~$\nmin$ to~$\JFind/4$.
The $\widetilde{g/\Delta^i}$ are determining truncations of a basis of
the space $\Jcusp{12i}\JFind/\Delta^i$ over~$\C$.  Because the given
Laurent polynomial $\tilde\psi$ has coefficients in~$\LaurentZ$,
Theorem~\ref{wt0integralitythm} says that it lies in the $\Q$-linear
span of the basis.

($\impliedby$) This is clear, because the $\widetilde{g/\Delta^i}$ are
determining truncations of a basis of the subspace
$\Jcusp{12i}\JFind/\Delta^i$ of~$\Jwh0\JFind$.
\end{proof}

\subsection{Computational confirmation\label{subsecAlgo}}


We have three methods to check in practice whether a Laurent polynomial
$\tilde\psi(\tau,z)=\sum_{n=\nmin}^{N/4}\psi_n(\zeta)q^n$ with
each $\psi_n(\zeta)\in\LaurentZ$ and $\psi_\nmin(\zeta)\ne0$ truncates
some element~$\psi$ of $\JkNcusp$.  
The first method is a complete algorithmic solution, with a speed-up in a particular case. 
The second and third methods are ``lucky searches,'' \ie, fast attempts that could be run first 
even though they do not always succeed.  

The first method is the test described in Proposition~\ref{testLaurentTrunc},
which will find any match that exists provided that the computations
are tractable.  
This method confirmed all the candidate truncations for
$\psi\in\JzeroNwh$ that gave rise to weight~$4$ symmetric Borcherds
products in \cite{psysqfree}, with $(\BPc,t)=(1,0)$ in all cases.  A
disadvantage of this method is that it describes $\psi$ using a long
linear combination of $\Jcusp{12i}N$ basis elements, whose coefficients
are rational numbers with enormous numerators and denominators.
For $(\BPc,t)=(1,1)$, we can improve this method by subtracting
$\widetilde{(\phi_1|V_2)/\phi_1}=q\inv-G_1(\zeta)+\cdots$
from $\tilde\psi(\tau,z)=q\inv-G(\zeta)+\cdots$ to get an initial
expansion from~$q^0$ to~$q^{\JFind/4}$, and then checking whether the
difference truncates an element of $\Jcusp{12}N/\Delta$; if so then
that element takes the form $\psi-(\phi_1|V_2)/\phi_1$ where
$\tilde\psi$ truncates~$\psi$.

The second method is particular to~$t=0$.  Consider all basic theta blocks
$\phi\in\JkNcusp$ with $\BPA=1$, \ie,
$\phi(\tau,z)=q\,b(\zeta)(1-G(\zeta)q+\cdots)$.  For each~$\phi$ 
satisfying the condition~$\nu(2r) \le \nu(r)$ of Theorem~\ref{whquotienttheorem},  
the quotient
$(\phi|V_2)/\phi$ lies in~$\JzeroNwh(\Z)$ and has principal part~$0$; that
is, each quotient takes the form $G(\zeta)+\bigOh q$ where $G(\zeta)$
is the germ of the theta block.  Search the space spanned by these
quotients for an element that truncates to~$\tilde\psi$, which also has 
principal part~$0$.  This method can exhibit $\psi$ as a short linear  
combination of elements, and the coefficients have tended to be small
integers in the cases calculated so far, but we make no
general assertions here.
Failure of this method does not preclude $\tilde\psi$ truncating
some~$\psi\in\Jwh0N$.

For the third method, the context from the algorithm is that we have a
basic theta block $\phi\in\JkBPcNcusp$, and we are seeking
$\psi=g/\phi$ where $g\in\Jcusp k{(\BPc+1)N}$ is divisible by the
baby theta block $b(\zeta)$ of~$\phi$.  As a special case of
this, a basic theta block $\Theta\in\Jcusp k{(\BPc+1)N}$ that is divisible
by~$b(\zeta)$ is called an {\em inflation\/} of~$\phi$, and we
can produce inflations~$\Theta$ of~$\phi$ and then search 
for~$\psi$ in the space spanned by the quotients $\Theta/\phi$.
This method with $(\BPc,t)=(2,0)$ produced weight~$2$ nonlifts of
levels $N=277,249,295$, needing only one inflation~$\Theta$ each time.
That is, in these cases, not only is the leading Fourier--Jacobi
coefficient~$\phi_2\in\Jcusp2{2N}$ of the Borcherds product $\BL(\psi)$ a
basic theta block, but so is the next Fourier--Jacobi
coefficient~$\phi_3\in\Jcusp2{3N}$.
Again, failure of this method does not preclude $\tilde\psi$
truncating some~$\psi\in\Jwh0N$.

\medskip

We give an example of the second method, set in $\SwtKlevel2{277}$ 
and mentioned in the introduction.  
Letting $0^e$ abbreviate $\eta^e$ and letting $r^e$ abbreviate
$(\vartheta_r/\eta)^e$ for~$r\ge1$, consider the three basic theta blocks
$\phi_1=0^4 1^2 2^2 3^2 4^1 5^1 14^1 17^1$,
$\phi_2=0^4 1^1 3^1 4^2 5^1 6^1 8^1 9^2 15^1$,
$\phi_3=0^4 1^1 2^1 3^1 4^2 5^1 7^1 8^1 9^1 17^1$,
all in $\Jcusp2{277}$, and consider the linear combination
$\psi=-(\phi_1|V_2)/\phi_1-(\phi_2|V_2)/\phi_2+(\phi_3|V_2)/\phi_3$,
in $\Jwh0{277}(\Z)$.
Because the $q^0$-coefficient of each $-(\phi|V_2)/\phi$ is the germ
$G(\phi)$, the $q^0$-coefficient of~$\psi$ is
the germ $G(\phi_1)+G(\phi_2)-G(\phi_3)$ of the theta
block $\phi_4=0^4 1^2 2^1 3^2 4^1 5^1 6^1 7^{-1} 9^1 14^1 15^1$, a basic
theta block with denominator.
The linear combination $\psi$ has nonnegative Humbert multiplicities,
and so $\BL(\psi)$ lies in $\SwtKlevel2{277}$ and its 
leading Fourier--Jacobi coefficient is~$\phi_4$.  
Its lowest $q$-power and $\xi$-power are $q^A=q^1$ and
$\xi^C=\xi^{277}$, so that our algorithm parameters
$(c,t)=(C/277,A-c)$ are~$(1,0)$.
This Borcherds product is not a Gritsenko lift.
The existence of a nonlift dimension in $\SwtKlevel2{277}$
was shown in \cite{py15}, but the nonlift construction here
is new.  We will discuss more nonlift constructions in
\cite{pyinflation}.

We give an example of the third method, set in $\SwtKlevel2{249}$.
Consider the two basic theta blocks
$\Theta=0^4 1^2 2^2 3^2 4^2 5^2 6^2 7^1 8^2 9^1 10^1 11^1 12^1 13^1
14^1 18^1\in\Jcusp2{3\cdot249}$ and
$\phi=0^4 1^3 2^2 3^2 4^2 5^2 6^3 7^2 8^1 9^1 10^1 11^1 12^1 13^1
\in\Jcusp2{2\cdot249}$.
The baby theta block quotient $b(\Theta)/b(\phi)$ is the germ $G(\phi)$.
The quotient $\psi=\Theta/\phi$ is
$(\vartheta_8\vartheta_{18}\vartheta_{14})/
(\vartheta_1\vartheta_6\vartheta_7)$,
an element of $\Jwh0{249}(\Z)$.
It has nonnegative Humbert multiplicities, and so $\BL(\psi)$ lies in
$\SwtKlevel2{249}$.
Its leading Fourier--Jacobi coefficient is~$\phi$.
Its lowest $q$-power and $\xi$-power are $q^A=q^2$ and
$\xi^C=\xi^{498}$, so that our algorithm parameters
$(c,t)=(C/249,A-c)$ are~$(2,0)$.
This Borcherds product is not a Gritsenko lift.
Its source-form $\psi$ was described as a quotient of
$\vartheta$-functions in \cite{psysqfree}, but here we explain how we
found it.

We give an example combining the second and third methods, set in
$\SwtKlevel2{587}$.  Consider the basic theta blocks
$\phi=0^4 1^2 2^3 3^2 4^2 5^2 6^2 7^1 8^2 9^1 10^1 11^1 12^1 13^1 14^1$
in $\Jcusp2{587}$
and $\Xi=0^4 1^1 2^2 3^2 4^2 5^1 6^2 7^1 8^1 9^1 10^2 12^1 13^1 14^1
15^1 16^1 18^1 22^1$ in $\Jcusp2{2\cdot587}$.
Their quotient $\Xi/\phi$ is
$(\vartheta_{10}\vartheta_{18}\vartheta_{15}\vartheta_{16}\vartheta_{22})/
(\vartheta_{1}\vartheta_{3}\vartheta_{5}\vartheta_{8}\vartheta_{11})$,
an element of $\Jwh0{587}(\Z)$.
Let $\psi=(\phi|V_2)/\phi-\Xi/\phi$.
Calculations show that the $q^0$-coefficient of~$\psi$ is the germ
$G(\phi)$, and that $\psi$ has nonnegative Humbert multiplicities;
hence $\BL(\psi)$ lies in $\SwtKlevel2{587}^-$.
The parameters $(c,t)$ here are $(1,1)$.
This example is from \cite{gpy16}.

\section{Cuspidality criterion\label{sectionCusp}}

We establish a test to determine whether a
paramodular form is a cusp form.

\begin{theorem}\label{cuspinesstheorem}
Let an integer weight~$k$ and a positive level~$N$ be given.
Associate to each positive divisor $m$ of~$N$ the quantities
$\ell=\gcd(N/m,m)$ and $\delta$ such that $N/m=\delta\ell$.  
Introduce a subgroup of~$\SLtwoZ$ associated with~$\ell$,
$$
\widetilde\Gamma_{\!1}(\ell)=\langle\Gamma_{\!1}(\ell),-I_2\rangle
=\begin{cases}
\Gamma_{\!1}(\ell)&\text{if $\ell=1,2$},\\
\Gamma_{\!1}(\ell)\sqcup-\Gamma_{\!1}(\ell)&\text{if $\ell\ge3$},
\end{cases}
$$
whose index $\widetilde I_1(\ell)=[\SLtwoZ:\widetilde\Gamma_{\!1}(\ell)]$ is
$$
\widetilde I_1(\ell)=\begin{cases}
1&\text{if $\ell=1$},\\
3&\text{if $\ell=2$},\\
\tfrac12\ell^2\prod_{p\mid\ell}(1-1/p^2)&\text{if $\ell\ge3$}.
\end{cases}
$$
A paramodular form $f\in\MkKN$ is a cusp form if and only if 
the following finitely many Fourier coefficients vanish: 
$$
\fc{n\delta\smallmat{\phantom{-}1}{-m}{-m}{\phantom{-}m^2}}f=0,\quad
0<m\mid N,\ 
0\le n\le k\widetilde I_1(\ell)/12.
$$
Furthermore, if $k$ is odd then the vanishing condition in the
previous display needs to be checked only when~$\ell\ge3$.
\end{theorem}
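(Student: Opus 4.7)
The plan is threefold: (i) reduce cuspidality of $f\in\MkKN$ to the vanishing of certain elliptic modular forms $h_m$ associated with each positive divisor $m\mid N$; (ii) identify each $h_m$ as a modular form of weight $k$ for the congruence subgroup $\widetilde\Gamma_{\!1}(\ell)$ with $\ell=\gcd(N/m,m)$; (iii) apply a valence inequality to turn each vanishing into a finite Fourier-coefficient check.

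For step (i), I would start from the rank-one characterization established in section~\ref{sectionPara}: $f$ is a cusp form iff $\fc tf=0$ for every rank-$\le 1$ matrix $t\in\XtwoNsemi$. Every such $t$ has the form $t=\lambda vv'$ with $v$ a primitive integer column vector and $\lambda\in\Qnn$. The transformation law $\fc{t[\alpha\inv]}f=(\det\alpha)^k\fc tf$ for $\alpha\in\GampmsupzeroN$ from section~\ref{sectionSymm} keeps these coefficients constant (up to sign) along $\GampmsupzeroN$-orbits of $v$, and I expect orbit representatives to be classified by the invariant $m=\gcd(b,N)$ after $\gcd(a,N)$ is normalized to $1$, with representatives $v=(1,-m)'$ for $m\mid N$. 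The rank-one coefficients in $\XtwoNsemi$ then fall into the families
$$
\lset \fc{n\delta\smallmat{\phantom{-}1}{-m}{-m}{\phantom{-}m^2}}f : n\in\Znn \rset,
\qquad m\mid N,\ m>0,
$$
where the factor $\delta=N/(m\ell)$ is forced by the defining integrality of $\XtwoNsemi$. Assembling $h_m(\tau)=\sum_{n\ge0}\fc{n\delta\smallmat{\phantom{-}1}{-m}{-m}{\phantom{-}m^2}}f\,q^n$, cuspidality of $f$ becomes equivalent to $h_m\equiv 0$ for every positive $m\mid N$.

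For step (ii), I would show $h_m\in\MFswtgp k{\widetilde\Gamma_{\!1}(\ell)}$ by constructing an explicit $\tilde g_m\in\GSptwoposQ$ that conjugates the $m$-cusp line to the standard Klingen line fixed by Siegel's $\Phi$ operator, and verifying that $\tilde g_m\inv\KN\tilde g_m\cap\iota_1(\SLtwoQ)=\iota_1(\widetilde\Gamma_{\!1}(\ell))$. Then $f\wtk\tilde g_m$ is $\iota_1(\widetilde\Gamma_{\!1}(\ell))$-invariant, its Siegel $\Phi$-image is (up to a nonzero scalar) the series $h_m$, and the standard argument that $\Phi$ preserves the elliptic weight yields $h_m\in\MFswtgp k{\widetilde\Gamma_{\!1}(\ell)}$. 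The emergence of $\ell=\gcd(N/m,m)$ rather than $m$ or $N/m$ individually should come from the lattice-stabilizer calculation inside $\KN$ pulled back along $\tilde g_m$: the two level data $m$ and $N/m$ intersect in $\widetilde\Gamma_{\!1}(\ell)$. For step (iii), Lemma~\ref{Gammavalence} with $\Gamma=\widetilde\Gamma_{\!1}(\ell)$ and $I=\widetilde I_1(\ell)$ forces $h_m=0$ once $\fc n{h_m}=0$ for all $0\le n\le k\widetilde I_1(\ell)/12$, which is exactly the theorem's finite check; the index formula $\widetilde I_1(\ell)$ follows from $[\SLtwoZ:\Gamma_{\!1}(\ell)]=\ell^2\prod_{p\mid\ell}(1-1/p^2)$ for $\ell\ge 3$ divided by~$2$ upon adjoining $-I_2$, with the special values at $\ell\in\{1,2\}$ reflecting that $-I_2\in\Gamma_{\!1}(\ell)$ there. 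The odd-weight refinement is immediate: for $\ell\le 2$ the group $\widetilde\Gamma_{\!1}(\ell)$ contains $-I_2$, so $\MFswtgp k{\widetilde\Gamma_{\!1}(\ell)}=0$ automatically when $k$ is odd and those Fourier-coefficient checks can be skipped.

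The main obstacle will be the stabilizer calculation in step~(ii): pinning down the intersection $\tilde g_m\inv\KN\tilde g_m\cap\iota_1(\SLtwoQ)$ and recognizing the answer as exactly $\iota_1(\widetilde\Gamma_{\!1}(\ell))$, neither finer nor coarser. The orbit-parametrization claim in step~(i) is a secondary hurdle that must be made precise, since naive row-reduction in $\GampmsupzeroN$ appears to collapse too many $(1,-m)'$ into a single orbit, and the correct invariant (probably involving both $\gcd(a,N)$ and $\gcd(b,N)$ of the primitive vector, coupled to the scaling constraint on $\lambda$) needs to be isolated so that the representatives listed in the theorem really exhaust the rank-one coefficients without redundancy.
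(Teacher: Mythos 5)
The hard direction of your argument hinges on the exhaustion claim in step~(i), and that claim is false in general. The only coefficient-level symmetries available are $\fc{t[\alpha\inv]}{f}=(\det\alpha)^k\fc{t}{f}$ for $\alpha\in\GampmsupzeroN$; writing a rank-one index as $t=\lambda vv'$ with $v$ a primitive column vector, this acts on $v$ through $\alpha^*$, i.e., through the copy of $\GLtwoZ$ whose \emph{lower-left} entry is divisible by~$N$, so the orbits of rank-one lines are the cusps of $\GzN$ modulo negation. Their number exceeds the number of divisors of~$N$ as soon as some $\ell=\gcd(m,N/m)$ has Euler totient $\phi(\ell)>2$. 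Concretely, for $N=25$ the singular index $t=\smallmat{4}{-10}{-10}{25}\in\XtwoNsemi$ (here $v=(2,-5)'$, $\lambda=1$) is not $\GampmsupzeroN$-equivalent to any $n\delta\smallmat{1}{-m}{-m}{m^2}$ with $m\in\{1,5,25\}$, so the vanishing of your families $h_1,h_5,h_{25}$ together with coefficient symmetries does not force this coefficient to vanish. (Your stated worry points the wrong way: the vectors $(1,-m)'$ do remain in distinct orbits, but they fail to exhaust the orbits.) Thus the equivalence you assert at the end of step~(i), ``cuspidality $\iff$ $h_m\equiv0$ for all $m\mid N$,'' is exactly the nontrivial content of the theorem and cannot be obtained by the orbit argument.

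What closes this gap in the paper is that the rank-one-support characterization is used only for the trivial direction; for the hard direction the paper works from the definition $\Phi(f\wtk{g})=0$ for all $g\in\SptwoQ$ and invokes Reefschl\"ager's decomposition $\SptwoQ=\bigsqcup_{0<m\mid N}\KN\gamma_m\PtwooneQ$ with $\gamma_m=\alpha_m\matoplus\alpha_m^*$, $\alpha_m=\smallmat1m01$ --- i.e., the fact that $\KN$ has only as many one-dimensional cusps as $N$ has divisors, even when $\GzN$ has more cusps --- together with the identity $\Phi(f\wtk{\gamma_m}\wtk{u})=d_{22}^{-k}\Phi(f\wtk{\gamma_m})\wtk{u_1}$ for $u\in\PtwooneQ$. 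The single elliptic form $f_m=\Phi(f\wtk{\gamma_m})\wtk{\smallmat100\delta}$ thereby controls the entire double coset, which is what kills the extra rank-one lines in the $N=25$ example; any repair of your route ends up needing exactly these two ingredients. Your steps (ii)--(iii) are then close to the paper's, with two corrections: $f_m$ is modular on $\widetilde\Gamma_{\!1}(\ell)$ only with a quadratic character $\chi^k$ coming from the $(4,4)$-entry $d_{22}=\pm1$ with $d_{22}\equiv d\bmod\ell$ (the paper cites Theorem~4.3 of \cite{py13} for the stabilizer computation and applies the valence bound to $f_m^2$, which has trivial character), and the case $k$ odd, $\ell\in\{1,2\}$ must be handled separately, since there $d\bmod\ell$ does not determine $d_{22}^k$; the paper disposes of it by showing $f_m=0$ directly, which is also what justifies the theorem's final sentence. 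Holomorphy of $f_m$ at the cusps likewise requires the K\"ocher-principle argument, not merely existence of the $\Phi$-limit.
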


\begin{proof}
Because the coefficient indices in the theorem are all singular, only
the ``if'' needs proof.

By definition, a paramodular form $f\in\MkKN$ is a cusp form if
$\Phi(f\wtk g)=0$ for all $g\in\SptwoQ$.  Introduce the matrices
$$
\alpha_m=\smallmat1m01,\quad
\gamma_m=\smallmat{\alpha_m}00{\alpha_m^*}\qquad
\text{for any positive divisor~$m$ of~$N$}.
$$
H.~Reefschl\"ager's decomposition (\cite{reefschlager73}, and see
Theorem~1.2 of~\cite{py13})
$$
\SptwoQ=\bigsqcup_{0<m\mid N}\KN\gamma_m\PtwooneQ
$$
shows for any $g\in\SptwoQ$ that
$\Phi(f\wtk g)=\Phi(f\wtk{\gamma_m}\wtk u)$
for some $0<m\mid N$ and $u\in\PtwooneQ$.
Let $u_1\in\GLtwoQ$ denote the $2\times2$ matrix of upper left
entries of the four blocks of~$u$.
(Thus the map $\iota_1$ from section~\ref{sectionParaDefs} is a
section of the map here from matrices~$u$ to matrices~$u_1$.)
One can check that the upper left entry of $u(\smallmat\tau00\omega)$
is $u_1(\tau)$ and its lower right entry goes to~$i\infty$ as $\omega$
does so, and also $j(u,\smallmat\tau00\omega)=d_{22}j(u_1,\tau)$
where $d_{22}\in\Qx$ is the $(4,4)$-entry of~$u$.
Thus for any $\tau\in\UHP$,
letting $\noway(\tau,\omega)$ denote the lower right entry
of~$u(\smallmat\tau00\omega)$,
\begin{align*}
\big(\Phi(f\wtk{\gamma_m}\wtk{u})\big)(\tau)
&=\lim_{\omega\to i\infty}
(f\wtk{\gamma_m}\wtk{u})(\smallmat\tau00\omega)\\
&=d_{22}^{-k}j(u_1,\tau)^{-k}\lim_{\omega\to i\infty}
(f\wtk{\gamma_m})(\smallmat{u_1(\tau)}**{\noway(\tau,\omega)})\\   
&=d_{22}^{-k}j(u_1,\tau)^{-k}
\big(\Phi(f\wtk{\gamma_m})\big)(u_1(\tau))\\
&=d_{22}^{-k}\big(\Phi(f\wtk{\gamma_m})\wtk{u_1}\big)(\tau),
\end{align*}
which is to say,
\begin{equation}\label{thm8.1tag}
\Phi(f\wtk{\gamma_m}\wtk u)
=d_{22}^{-k}\Phi(f\wtk{\gamma_m})\wtk{u_1}.
\end{equation}
It follows that $f$ is a cusp form when
$\Phi(f\wtk{\gamma_m})=0$ for each positive divisor~$m$ of~$N$.
Equivalently, $f$ is a cusp form when for each positive divisor~$m$
of~$N$, the function
$$
f_m=(\Phi(f\wtk{\gamma_m}))\wtk{\smallmat100\delta}
$$
is zero.  We will study the function~$f_m$.

We show that $f_m=0$ if $k$ is odd and $\ell\in\{1,2\}$.
The condition on~$\ell$ says that $2$ lies in the ideal generated by
$N/m$ and~$m$, and so $N\mid (2m-\lambda m^2)$ for some integer~$\lambda$.
The computation
$$
\rmat1m01\rmat{-1}0\lambda1\rmat1{-m}01=\cmat*{2m-\lambda m^2}**
\in\GampmsupzeroN
$$
shows that $\smallmat{-1}0{\phantom{-}\lambda}1\matoplus
\smallmat{-1}0{\phantom{-}\lambda}1^*$ lies in
$\gamma_m\inv\KN\gamma_m$.
It follows that
$$
f\wtk{\gamma_m}
\wtk{\smallmat{-1}0{\phantom{-}\lambda}1\matoplus
\smallmat{-1}0{\phantom{-}\lambda}1^*}
=f\wtk{\gamma_m}.
$$
Apply Siegel's $\Phi$-map to the left side, and note that the upper
left entries of the four blocks of the $\PtwooneQ$ matrix
$\smallmat{-1}0{\phantom{-}\lambda}1\matoplus
\smallmat{-1}0{\phantom{-}\lambda}1^*$
make up the negative identity matrix~$-I_2$, to get by~\eqref{thm8.1tag}
in the previous paragraph
$$
\Phi(f\wtk{\gamma_m}
\wtk{\smallmat{-1}0{\phantom{-}\lambda}1\matoplus
\smallmat{-1}0{\phantom{-}\lambda}1^*})
=\Phi(f\wtk{\gamma_m})\wtk{-I_2},
$$
which is $(-1)^k\Phi(f\wtk{\gamma_m})$.
On the other hand, apply Siegel's $\Phi$-map to the right side of
the penultimate display to get $\Phi(f\wtk{\gamma_m})$.
Because $k$ is odd and these are equal, $\Phi(f\wtk{\gamma_m})=0$,
and consequently $f_m=0$ as claimed.

We explain that Theorem~4.3 of \cite{py13} and its proof show that for
each positive divisor~$m$ of~$N$, the function $f_m$ lies in
$\MFs{\widetilde\Gamma_{\!1}(\ell),\chi^k}$, where the character
$\chi:\widetilde\Gamma_{\!1}(\ell)\lra\{\pm1\}$ takes the value~$1$
on~$\Gamma_{\!1}(\ell)$ and for $\ell\ge3$ it takes the value~$-1$
on~$-\Gamma_{\!1}(\ell)$.
By the previous paragraph, we may assume that $k$ is even or~$\ell\ge3$.
Theorem~4.3 of \cite{py13} states that taking each matrix~$u$ in the
group $\gamma_m\inv\KN\gamma_m\cap\PtwooneQ$ and forming from it the
corresponding matrix~$u_1=\smallmatabcd$ of upper left entries of the
four blocks of~$u$ produces the group
$\smallmat100\delta\widetilde\Gamma_{\!1}(\ell)\smallmat100\delta\inv$.
Further, the proof shows that given $u_1=\smallmatabcd$ in the latter group,
any $u$ that gives rise to it has $(4,4)$-entry $d_{22}=\pm1$ such
that $d_{22}=d\mymod\ell$.
Because $k$ is even or~$\ell\ge3$, this determines~$d_{22}^k$.
Given a paramodular form $f\in\MkKN$, consider any
$\tilde u_1\in\widetilde\Gamma_{\!1}(\ell)$ and let
$u_1=\smallmat100\delta\tilde u_1\smallmat100\delta\inv$,
which arises from some $u\in\gamma_m\inv\KN\gamma_m\cap\PtwooneQ$,
and compute
\begin{alignat*}2
f_m\wtk{\tilde u_1}
&=\big(\Phi(f\wtk{\gamma_m})\big)\wtk{\smallmat100\delta}\wtk{\tilde u_1}
&&\quad\text{by definition of~$f_m$}\\
&=\big(\Phi(f\wtk{\gamma_m})\big)\wtk{u_1}\wtk{\smallmat100\delta}
&&\quad\text{by the relation between $\tilde u_1$ and~$u_1$}\\
&=d_{22}^{k}\Phi(f\wtk{\gamma_m}\wtk u)\wtk{\smallmat100\delta}
&&\quad\text{by~\eqref{thm8.1tag} in the penultimate paragraph}\\
&=d_{22}^{k}\Phi(f\wtk{\gamma_m})\wtk{\smallmat100\delta}
&&\quad\text{because $u\in\gamma_m\inv\KN\gamma_m$}\\
&=d_{22}^{k}f_m
&&\quad\text{by definition of~$f_m$}.
\end{alignat*}
Because $\tilde u_1$ and~$u_1$ have the same diagonal elements, the
value $d_{22}=\pm1$ satisfies $d_{22}=d\mymod\ell$ where now $d$ is
the lower right entry of~$\tilde u_1$; thus $d_{22}^k=\chi^k(\tilde u_1)$.
The remaining thing to show is that $f_m$ is analytic at the cusps of
$\widetilde\Gamma_{\!1}(\ell)$, which is to say that given any
$g=\smallmatabcd\in\SLtwoZ$, the function $f_m\wtk g$ is analytic
at~$i\infty$.
Let $u_1=\smallmat100\delta g\smallmat100\delta\inv
=\smallmat a{b/\delta}{c\delta}d\in\SLtwoQ$,
and let
$$
u=\iota_1(u_1)=\left[\begin{array}{cc|cc}
a & 0 & b/\delta & 0 \\
0 & 1 & 0 & 0 \\
\hline
c\delta & 0 & d & 0 \\
0 & 0 & 0 & 1
\end{array}\right]\in\PtwooneQ,
$$
so $f_m\wtk g=\Phi(f\wtk{\gamma_m})\wtk{\smallmat100\delta}\wtk{g}
=\Phi(f\wtk{\gamma_m})\wtk{u_1}\wtk{\smallmat100\delta}
=\Phi(f\wtk{\gamma_m u})\wtk{\smallmat100\delta}$.
By the K\"ocher Principle, $f\wtk{\gamma_m u}$ is bounded on the set
$\lset\Im \Omega>Y_o\rset$ for every positive $2\times2$ real
matrix~$Y_o$, and this makes $f_m\wtk g$ analytic at~$i\infty$, as
desired.  This completes the proof that $f_m$ lies in
$\MFs{\widetilde\Gamma_{\!1}(\ell),\chi^k}$.

In consequence of $f_m$ lying in $\MFs{\widetilde\Gamma_{\!1}(\ell),\chi^k}$
and the character $\chi^k$ being trivial or quadratic, $f_m^2$ lies
in~$\MFswtgp{2k}{\widetilde\Gamma_{\!1}(\ell)}$, and so it is the zero
function when its Fourier coefficients of index at most
$2k\widetilde I_1(\ell)/12$ vanish.  Equivalently, $f_m=0$ when
its Fourier coefficients of index at most $k\widetilde I_1(\ell)/12$ vanish.
To compute the Fourier coefficients of~$f_m$, let the given
paramodular form $f$ have Fourier expansion
$$
f(\Omega)=\sum_{t\in\XtwoNsemi}\fc tf\,\e(\ip t\Omega).
$$
Because $\det \alpha_m=1$, $f\wtk{\gamma_m}(\Omega)$ is simply
$f(\Omega[\alpha_m'])$, and because
$\ip t{\Omega[\alpha_m']}=\ip{t[\alpha_m]}\Omega$,
the Fourier expansion of~$f$ gives
$$
f\wtk{\gamma_m}(\Omega)
=\sum_{t\in\XtwoNsemi}\fc tf\,\e(\ip{t[\alpha_m]}\Omega).
$$
Each $t=\smallmat n{r/2}{r/2}{\mu N}\in\XtwoNsemi$ gives rise to
$t[\alpha_m]=\smallmat n{mn+r/2}{mn+r/2}{m^2n+mr+\mu N}$.  Thus the
image of $f\wtk{\gamma_m}$ under Siegel's $\Phi$-map sums only
over indices~$t$ such that $r=-2mn$ to make the off-diagonal entries
of $t[\alpha_m]$ zero and $\mu N=m^2n$ to make the lower right entry
zero as well.  Because $N/m=\ell\delta$ and $m=\ell\delta'$ with
$\gcd(\delta,\delta')=1$, the condition $\mu N=m^2n$ gives $\delta\mid n$.
Now we have
$$
\Phi\big(f\wtk{\gamma_m}\big)(q)
=\sum_{n\ge0,\ \delta\mid n}
\fc{n\smallmat{\phantom{-}1}{-m}{-m}{\phantom{-}m^2}}f\,q^n,
$$
and applying $\wtk{\smallmat100\delta}$ gives the Fourier expansion of~$f_m$,
$$
f_m(q)=\sum_{n\ge0}\fc n{f_m}\,q^n,\qquad
\fc n{f_m}
=\delta^{-k}\fc{n\delta\smallmat{\phantom{-}1}{-m}{-m}{\phantom{-}m^2}}f
\text{ for each~$n$}.
$$

Altogether, $f$ is a cusp form when for each positive divisor~$m$
of~$N$, the Fourier coefficients $\fc n{f_m}
=\delta^{-k}\fc{n\delta\smallmat{\phantom{-}1}{-m}{-m}{\phantom{-}m^2}}f$
vanish for $0\le n\le k\widetilde I_1(\ell)/12$.  This proves the
first statement in the theorem.
The second statement follows from the fact that $f_m=0$ if
$k$ is odd and $\ell\in\{1,2\}$.
\end{proof}

When using Theorem~\ref{cuspinesstheorem} to check whether a Borcherds
product is a cusp form, we exploit the fact that the Borcherds product~$f$ is
either symmetric or antisymmetric, so that
$$
\fc\smallpindN f=0\iff\fc\smallpindnN f=0.
$$
If~$n<m$ then the latter Fourier coefficient in the previous display
is easier to compute for a Borcherds product.
In getting a Borcherds product Fourier expansion from the weakly
holomorphic Jacobi form that gives rise to the Borcherds product, we are
getting the Jacobi coefficients one at a time and at greater expense
with each Jacobi coefficient, so the difficulty is measured by the
size of~$m$ in the index $\smallpindN$.  By contrast, when getting the
Fourier expansion of a Gritsenko lift or the product of Gritsenko
lifts, the difficulty is measured by the size of $4nmN-r^2$.

\section{Borcherds product algorithm\label{sectionBLAlgorithm}}

This section presents our algorithm to find all holomorphic Borcherds
products of a given weight~$k$ and level~$N$ that have a specified $A$ and~$C$.  
%
%
That is, the
algorithm finds all $\BL(\psi)\in\MkKN$ of the form
$$
\BL(\psi)(\Omega)
=q^A\xi^Cb(\zeta)(1-G(\zeta)q+\cdots)\exp(-\Grit(\psi)(\Omega)).
$$
Here $G(\zeta)$ is the $q^0$-coefficient of~$\psi(\tau,z)$.
Noting, by the involution conditions, that $A \ge C/N$, 
we write $A$ and~$C$ in terms of two other parameters, $c$ and $t$, 
in the following format, 
$$
(A,C)=(\BPc+t,\BPc N),\quad\BPc\in\Zpos,\ t\in\Znn.
$$
We call~$t$ the {\em offset\/}.  
To justify our claim of finding {\em all \/}Borcherds products in $\SkKN$, we note that $c$ is bounded above by
the number of Fourier-Jacobi coefficients that determine  $\SkKN$;  this bound is determined  
in \cite{bpy16} and some improvements in special cases can be found in \cite{psysqfree}.  
Also, for each~$c$ there are only a finite number of~$t$ to consider because there are only a 
finite number of theta blocks in $\JkBPcNcusp$.  
Before laying out the algorithm, we show that
$$
\text{the Borcherds product $f=\BL(\psi)$ is}\
\begin{cases}
\text{symmetric}&\text{if~$t=0$ or~$t=2$},\\
\text{antisymmetric}&\text{if~$t=1$}.
\end{cases}
$$
For an offset~$t\ge3$, symmetric and antisymmetric Borcherds products are
possible.
Further we show that, decomposing the leading theta block
$\phi\in\JkBPcNcusp$ of~$f$ as
$$
\phi(\tau,z)=q^{\BPc+t}b(\zeta)(1-G(\zeta)q+\cdots),
$$
the source-form~$\psi\in\JzeroNwh$ of~$f$ is
$$
\psi(\tau,z)=\begin{cases}
G(\zeta)+\sum_{n\ge1}\psi_n(\zeta)q^n&\text{if $t=0$},\\
q\inv+G(\zeta)+\sum_{n\ge1}\psi_n(\zeta)q^n&\text{if $t=1$},\\
\sum_{n=1-t}^{-1}\psi_n(\zeta)q^n+G(\zeta)
+\sum_{n\ge1}\psi_n(\zeta)q^n&\text{if $t\ge2$},
\end{cases}
$$
which has $t$ principal terms for $t=0,1$ and at most $t-1$ principal
terms for~$t\ge2$.  After this, we describe our algorithm to find all
paramodular Borcherds products for given $k$, $N$, $\BPc$, and~$t$.

\subsection{Offset $\boldsymbol{t=0}$: simple symmetric
  case\label{sectionACmmN}}

Take $(A,C)=(\BPc,\BPc N)$.  The leading Fourier--Jacobi coefficient of
$f=\BL(\psi)$ is a basic theta block in~$\JkBPcNcusp$,
$$
\phi_\BPc(\tau,z)=q^\BPc b(\zeta)(1-G(\zeta)q+\cdots),
$$
giving
$$
F_{\BPc,\BPc}(\zeta)=b(\zeta),\qquad
F_{\BPc+1,\BPc}(\zeta)=-b(\zeta)G(\zeta).
$$
From the involution condition $F_{\BPc,\BPc+1}=(-1)^k\epsilon F_{\BPc+1,\BPc}$, the
nonzero portion of the table of $q^n\xi^{mN}$-coefficients
$F_{n,m}(\zeta)$ of~$f$ begins
$$
\begin{array}{|c||c|c|}
\hline
F_{n,m}(\zeta) & n=\BPc                            & n=\BPc+1   \\
\hline\hline
m=\BPc         & b(\zeta)                          & -b(\zeta)G(\zeta) \\
\hline
m=\BPc+1       & -(-1)^k\epsilon\,b(\zeta)G(\zeta) & *           \\
\hline
\end{array}
$$
and so the next Fourier--Jacobi coefficient of~$f$ is
$$
\phi_{\BPc+1}(\tau,z)
=q^\BPc b(\zeta)\left(-(-1)^k\epsilon G(\zeta)+\cdots\right).
$$
Thus the quotient $\psi=-\phi_{\BPc+1}/\phi_\BPc$ has principal
part~$0$,
$$
\psi(\tau,z)=\frac{(-1)^k\epsilon G(\zeta)+\bigOh q}{1+\bigOh q}
=(-1)^k\epsilon G(\zeta)+\bigOh q.
$$
Because $G(\zeta)$ is the $q^0$-coefficient of~$\psi(\tau,z)$,
the previous display gives $(-1)^k\epsilon=1$,
making $f$ symmetric.  Alternatively, because $\psi$ has principal
part~$0$, the quantity~$D_0$ in Theorem~\ref{BPthm} is~$0$, and again
$f$ is symmetric.  Either argument gives 
$$
\psi(\tau,z)=G(\zeta)+\bigOh q.
$$

\subsection{Offset $\boldsymbol{t=1}$: simple antisymmetric
  case\label{sectionACm+1mN}}

Take $(A,C)=(\BPc+1,\BPc N)$.  The leading Fourier--Jacobi coefficient
of $f=\BL(\psi)$ is
$$
\phi_\BPc(\tau,z)=q^{\BPc+1}b(\zeta)(1-G(\zeta)q+\cdots),
$$
and so the nonzero portion of the $F_{n,m}$-table for~$f$ begins
$$
\begin{array}{|c||c|c|c|}
\hline
F_{n,m}(\zeta)  & n=\BPc                   & n=\BPc+1  & n=\BPc+2          \\
\hline\hline
m=\BPc          & 0                        & b(\zeta)  & -b(\zeta)G(\zeta) \\
\hline
m=\BPc+1        & (-1)^k\epsilon\,b(\zeta) & *         & *                 \\
\hline
\end{array}
$$
making the next Fourier--Jacobi coefficient of~$f$
$$
\phi_{\BPc+1}(\tau,z)=q^\BPc b(\zeta)\left((-1)^k\epsilon+\cdots\right).
$$
The quotient $\psi=-\phi_{\BPc+1}/\phi_\BPc$ has one principal term,
as can be seen using the leading terms of the numerator and denominator,
$$
\psi(\tau,z)
=\frac1q\,\cdot\,\frac{-(-1)^k\epsilon+\bigOh q}{1+\bigOh q}
=-(-1)^k\epsilon q\inv+\bigOh1.
$$
Thus the quantity~$D_0$ in the Borcherds product theorem
is~$-(-1)^k\epsilon=\pm1$, which is odd, and so $f$ is antisymmetric,
giving $(-1)^k\epsilon=-1$.
Again, $G(\zeta)$ is the $q^0$-coefficient of~$\psi(\tau,z)$, and so
$$
\psi(\tau,z)=q\inv+G(\zeta)+\bigOh q.
$$
Alternatively, we can see this by noting that now
$F_{\BPc+1,\BPc+1}(z)=0$, and so using the first two terms
of~$-\phi_{\BPc+1}$ and~$\phi_\BPc$ gives
$$
\psi(\tau,z)
=\frac1q\,\cdot\,\frac{1+\bigOh{q^2}}{1-G(\zeta)q+\bigOh{q^2}}
=q\inv(1+\bigOh{q^2})(1+G(\zeta)q+\bigOh{q^2}),
$$
which again is $\psi(\tau,z)=q\inv+G(\zeta)+\bigOh q$.

\subsection{Offset~$\boldsymbol{t=2}$: second symmetric case}

Take $(A,C)=(c+2,cN)$.
We show that in this case, any Borcherds product~$f=\BL(\psi)$ is
symmetric.  The nonzero portion of the $F_{n,m}$-table begins
$$
\begin{array}{|c||c|c|c|}
\hline
F_{n,m}(\zeta)  & n=\BPc & n=\BPc+1 & n=\BPc+2  \\
\hline\hline
m=\BPc          & 0      & 0        & b(\zeta) \\
\hline
m=\BPc+1        & 0      & *        & *        \\
\hline
\end{array}
$$
If $f$ is antisymmetric then the coefficient function
$F_{\BPc+1,\BPc+1}(\zeta)$ is~$0$, making the quotient
$\psi=-\phi_{\BPc+1}/\phi_\BPc$ have principal part~$0$, giving the
contradiction that $f$ is symmetric after all.

\subsection{Offset~$\boldsymbol{t\ge2}$: shortened principal
  part\label{sectionRCO>=2}}

Take $(A,C)=(\BPc+t,\BPc N)$ with $t\ge2$.  The nonzero portion of the
$F_{n,m}$-table for any relevant Borcherds product $f=\BL(\psi)$ begins, 
by the involution conditions $F_{m,n}=(-1)^k\epsilon\,F_{n,m}$,  
$$
\begin{array}{|c||c|c|c|c|c|}
\hline
F_{n,m}(\zeta)  & n=\BPc & n=\BPc+1 & \dots & n=\BPc+t-1 & n=\BPc+t \\
\hline\hline
m=\BPc          & 0      & 0        & \dots & 0          & b(\zeta) \\
\hline
m=\BPc+1        & 0      & *        & \dots & *          & *        \\
\hline
\end{array}
$$
Thus the lowest possible power of~$q$ in the quotient
$\psi=-\phi_{\BPc+1}/\phi_\BPc$ is $q^{1-t}$ rather than $q^{-t}$ as
was the case for~$t=0,1$, and $\psi$ is determined by its truncation
$$
\tilde\psi(\tau,z)=\sum_{n=1-t}^{N/4}\psi_n(\zeta)q^n,\quad
\psi_0(\zeta)=G(\zeta).
$$

\subsection{Algorithm}

Let $f=\BL(\psi)$ be a Borcherds product paramodular cusp form
arising from a weakly holomorphic Jacobi form~$\psi\in\Jwh0N$.
As above, $(A,C)=(\BPc+t,\BPc N)$ for some $\BPc\in\Zpos$ and $t\in\Znn$.
In light of Theorem~\ref{wt0integralitythm} and of the principal part
calculations just above, $\psi$ is determined by a truncation that
takes the form
$$
\tilde\psi(\tau,z)=\begin{cases}
G(\zeta)+\sum_{n=1}^{N/4}\psi_n(\zeta)q^n&\text{if $t=0$},\\
q\inv+G(\zeta)+\sum_{n=1}^{N/4}\psi_n(\zeta)q^n&\text{if $t=1$},\\
\sum_{n=1-t}^{-1}\psi_n(\zeta)q^n+G(\zeta)
+\sum_{n=1}^{N/4}\psi_n(\zeta)q^n&\text{if $t\ge2$}.
\end{cases}
$$
The algorithm to follow finds all such truncations that could arise
from a suitable~$\psi$ and then checks which such truncations actually  
do so.  The truncations used in the algorithm can be longer than those
in the previous display.
Each weakly holomorphic Jacobi form~$\psi\in\Jwh0N$ that we seek is
{\em singular-integral\/}, by which we mean that its Fourier
coefficients $\fcJ{n,r}\psi$ for $4nN-r^2\le0$ lie in~$\Z$.  As in
Theorem~\ref{wt0integralitythm}, $\psi$ is determined by its singular
Fourier coefficients.

To find the truncation of a putative $\psi$ in the Borcherds product theorem,
let $f=\BL(\psi)$ and let $\phi\in\JkBPcNcusp$ be the leading theta
block of~$f$,
$$
\phi(\tau,z)=q^{\BPc+t}b(\zeta)(1-G(\zeta)q+\cdots).
$$
Let $\delta=0$ if $t=0,1$, and let $\delta=1$ if~$t\ge2$.
Because the product $g=\psi\phi$ is the additive inverse of
the Fourier--Jacobi coefficient $\phi_{\BPc+1}$ of~$f$, it lies
in~$\Jcusp k{(\BPc+1)N}$ and its Laurent expansion has the form
$$
g(\tau,z)=\sum_{n\ge\BPc+\delta}g_n(\zeta)q^n,
\quad\text{$b(\zeta)\mid g_n(\zeta)$ in~$\LaurentQ$ for all~$n$}.
$$
Suitable truncations of $g$ and~$\phi$,
\begin{align*}
\tilde g(\tau,z)&=q^{\BPc+\delta}\sum_{n=0}^{N/4+t-\delta}
g_{\BPc+\delta+n}(\zeta)q^n\\
\tilde\phi(\tau,z)&=q^{\BPc+t}\sum_{n=0}^{N/4+t-\delta}
\phi_{\BPc+t+n}(\zeta)q^n,
\end{align*}
have a calculable truncated quotient containing the same number of terms,
$$
\tilde\psi(\tau,z)=\frac{\tilde g(\tau,z)}{\tilde\phi(\tau,z)}\mymod q^{N/4+1},
\qquad
\tilde\psi(\tau,z)=\sum_{n=\delta-t}^{N/4}\psi_n(\zeta)q^n,
$$
and this is the desired determining truncation of~$\psi=g/\phi$.

Our algorithm begins with some general initiation steps, assuming that
we have at hand long-enough $\LaurentZ[q]$ truncations of a basis
of $\Jcusp k{(\BPc+1)N}$ whose elements lie in $\LaurentZ[[q]]$.
After the initiation, for each basic theta block $\phi\in\JkBPcNcusp$
that could arise from the Borcherds product theorem as the leading theta
block of $\BL(\psi)$ for some~$\psi$, the algorithm creates all
Laurent polynomials
$\tilde\psi(\tau,z)=\sum_{n=\delta-t}^{N/4+\nextra}\psi_n(\zeta)q^n$
that could truncate such a~$\psi$.  Finally it determines which of these
Laurent polynomials really are such truncations.
The algorithm-parameter $\nextra$ provides additional truncation
length that can serve three purposes:
\begin{itemize}
\item It decreases the chance of apparent but false multiples of baby
  theta blocks in step~4 below, or even guarantees no such false
  multiples by using Theorem~\ref{divboundcor}.
\item It lets us check cuspidality in step~9 below by using
  Theorem~\ref{cuspinesstheorem}.
\item It produces longer truncations of the source-form~$\psi$ of the
  Borcherds product, so as to find more Fourier coefficients of the
  Borcherds product itself.
\end{itemize}
Because the algorithm uses $\nextra$ for this mixture of purposes, it
admits small refinements that involve decomposing $\nextra$ into
parts, but we give the simple version here for clarity.

\medskip

\noindent{\bf Step~0.}  If the offset~$t$ is~$0$ or~$1$ then set
$\delta=0$; otherwise set $\delta=1$.

\smallskip

\noindent{\bf Step~1.}  Find a maximal linearly independent set of
initial $\Jcusp k{(\BPc+1)N}$ expansions in $\LaurentZ[q]$, each
having the form
$$
\tilde g(\tau,z)=\sum_{n=1}^{N/4+\BPc+t+\nextra}g_n(\zeta)q^n.
$$

\smallskip

\noindent{\bf Step~2.}  In the $\Z$-module spanned by the initial
expansions~$\tilde g$ in step~1, find a maximal linearly independent
set of expansions in $\LaurentZ[q]$ whose first $\BPc+\delta-1$
coefficients are~$0$ (reusing the symbol~$\tilde g$ here),
$$
\tilde g(\tau,z)=q^{\BPc+\delta}\sum_{n=0}^{N/4+t-\delta+\nextra}
g_{\BPc+\delta+n}(\zeta)q^n.
$$

\smallskip

\noindent{\bf Step~3.}  Select a basic theta block $\phi\in\JkBPcNcusp$ of
the form
$$
\phi(\tau,z)=b(\zeta)q^{\BPc+t}(1-G(\zeta)q+\cdots),
$$
with the sum $1-G(\zeta)q+\cdots$ stored to~$q^{N/4+t-\delta+\nextra}$.
Section~\ref{sectionFind} explained how to find all basic theta blocks.  
Thus the truncation of the basic theta block is
$$
\tilde\phi(\tau,z)=q^{\BPc+t}b(\zeta)
\sum_{n=0}^{N/4+t-\delta+\nextra}\phi_{\BPc+t+n}(\zeta)q^n.
$$
The remaining steps of this algorithm depend on~$\phi$.  After
completing them, we return to this step and select another~$\phi$.
When none remain, the algorithm is done.

\smallskip

\noindent{\bf Step~4.}  In the $\Z$-module spanned by the initial
expansions in step~2, find a maximal linearly independent set of
expansions whose coefficients are divisible in $\LaurentZ$ by the baby
theta block $b(\zeta)$ of~$\phi(\tau,z)$,
$$
\tilde g_\phi(\tau,z)=q^{\BPc+\delta}\sum_{n=0}^{N/4+t-\delta+\nextra}
g_{\phi,\BPc+\delta+n}(\zeta)q^n,\quad
\text{$b(\zeta)\mid g_{\phi,\BPc+\delta+n}(\zeta)$ for each~$n$}.
$$
The corresponding nontruncated elements $g_\phi(\tau,z)$ of
$\Jcusp k{(\BPc+1)N}$ are not guaranteed to be divisible
by~$b(\zeta)$ unless $\nextra$ is large enough for Theorem~\ref{divboundcor}
to apply, though even with smaller~$\nextra$ they may well be.

\smallskip

\noindent{\bf Step~5.}  Divide each~$\tilde g_\phi(\tau,z)$ from
step~4 by the truncated basic theta block $\tilde\phi(\tau,z)$ from step~3
to get linearly independent elements of $q^{\delta-t}\LaurentZ[q]$,
$$
h(\tau,z)=\sum_{n=\delta-t}^{N/4+\nextra}h_n(\zeta)q^n.
$$
Each such~$h$ projects to the vector space $\bigoplus_{n,r}\Q q^n\zeta^r$,
where the sum is taken only over pairs $n,r$ such that $4nN-r^2\le0$
and $-N\le r<N$, \ie, over singular index classes.  Each
projection~$w_h$ has integral coordinates.  Check whether these
projections are independent.

If so, then compute a saturating integral basis of their $\Q$-span,
meaning a basis whose integral linear combinations are all the
integral elements of their $\Q$-span.
Each basis vector is a rational linear combination of the
vectors~$w_h$.  Replace the truncations in the previous display by
their corresponding rational linear combinations, and now let $h$
denote any of these new truncations.  The new truncations~$h$
are singular-integral (again, this means that $\fcJ{n,r}h\in\Z$
for $4nN-r^2\le0$) but overall they now lie in $q^{\delta-t}\LaurentQ[q]$,
no longer necessarily in $q^{\delta-t}\LaurentZ[q]$.
Let $H$ denote the $\Z$-module generated by the new truncations~$h$.

If the projections $w_h$ are not independent then step~4 has produced
some truncations~$\tilde g_\phi$ such that $g_\phi$ isn't divisible
by~$b(\zeta)$, and so the process of creating~$H$ could lose some of
the good information created in step~4.  Thus the algorithm could
miss some source-forms $\psi$ of Borcherds products.  The algorithm
aborts and reports this.  We should try again with a larger~$\nextra$.  

This step calls for two remarks.
First, dependence of the projections $w_h$ need not flag all instances
when step~4 has produced truncations of non-multiples of~$b(\zeta)$.
Such dependence does flag all instances when bad truncations could
make the algorithm miss Borcherds products.
In step~9 below, the algorithm can recognize another potential problem
stemming from bad truncations---an integer linear programming problem
might have infinitely many solutions---and abort.
Also, the algorithm might create false candidate truncations of
Borcherds product source-forms~$\psi$ in consequence of bad truncations,
but it is guaranteed to diagnose their falsity and discard them in
step~9 below.
The second remark about this step is that by Theorem~\ref{wt0integralitythm},
we could instead check for dependence among projections to the smaller
space arising from pairs $(n,r)$ such that $4nN-r^2<0$ and $-N\le r<N$,
possibly catching more cases where step~4 has produced bad truncations.
However, we need the integrality of the $4nN-r^2=0$ coefficients as well,
so still we would compute a saturating integral basis of the projection
onto all the singular coordinates, described at the beginning of this step.
Further, false truncations are not necessarily fatal to the algorithm
so long as the projection onto all the singular coordinates is
injective.  Thus, the algorithm can eschew the extra check, or carry
out the extra check and abort in the case of false truncations, or
continue in the case of false truncations but with a flag set.

\smallskip

\noindent{\bf Step~6.}  Search for an element $\tilde\psi_o\in H$
whose constant term is $G(\zeta)$,
$$
\tilde\psi_o(\tau,z)=\sum_{n=\delta-t}^{N/4+\nextra}\psi_{o,n}(\zeta)q^n,
\qquad
\psi_{o,0}=G.
$$

\smallskip

\noindent{\bf Step~7.}  In the $\Z$-module $H$ spanned by the
vectors~$h$ in step~5, find a maximal linearly independent set of
expansions whose constant coefficients are~$0$,
$$
h_o(\tau,z)=\sum_{n=\delta-t}^{N/4+\nextra}h_{o,n}(\zeta)q^n,
\qquad h_{o,0}=0.
$$
As in step~5, each such~$h_o$ projects to the vector space
$\bigoplus_{n,r}\Q q^n\zeta^r$, taking the sum only over singular
index classes.  The projections are independent.
Compute a saturating integral basis of their $\Q$-span.
Replace the truncations in the previous display by their corresponding
rational linear combinations, and let $H_o$ denote the $\Z$-module
that they generate.  Let $h_{o,i}$ for $i=1,\dotsc,d$
denote these new truncations, with $d=\dim(H_o)$.  The~$h_{o,i}$
are singular-integral.
If desired, carry out some further reduction on the basis elements
$h_{o,i}$ to simplify them, \eg, by reducing the sizes of their
singular coefficients.

\smallskip

\noindent{\bf Step~8.}  We have a lattice-translate of candidate
truncations,
$$
\tilde\psi\in\tilde\psi_o+H_o.
$$
Find the candidates~$\tilde\psi$ for which all Humbert surface
multiplicities in the divisor ${\rm div}(\BL(\psi))$ are nonnegative,
as follows.  From section~\ref{sectionBPTh}, the relevant Humbert
surfaces are parametrized by pairs $(D,r)$ where $D=4nN-r^2<0$ and
$|r|\le N$ and $\fcJ{j^2n,jr}{\tilde\psi}\ne0$ for some~$j\in\Zpos$.
Run through all pairs $(n,r)$ with $\delta-t\le n<N/4$ and
$4nN<r^2\le N^2$.  For each pair, check whether any Fourier coefficient
$\fcJ{j^2n,jr}{\tilde\psi_o}$ or $\fcJ{j^2n,jr}{h_{o,i}}$ is nonzero;
this requires checking only those~$j$ such that $j^2(4nN-r^2)$ is at
least the minimum conceivable discriminant $4(\delta-t)N-N^2$ where
the Jacobi form is supported.  If the check finds a nonzero
coefficient then add $(n,r)$ to the list of Humbert surface
parameter-pairs where checking is needed.

Once all the pairs to check have been determined, form the matrix~$M$
whose rows are indexed by the ``need to check'' parameter-pairs
$(n,r)$ and whose columns are indexed by $i=1,\dotsc,d$ where
$d=\dim(H_o)$, and whose $(n,r)\times i$ entry is
$\sum_{j\ge1}\fcJ{j^2n,jr}{h_{o,i}}$.  Also form the column vector~$b$
whose rows are indexed by the same parameter-pairs $(n,r)$, and whose
$(n,r)$-entry is $\sum_{j\ge1}\fcJ{j^2n,jr}{\tilde\psi_o}$.
The entries of $M$ and~$b$ are integers.  We seek integer column
vectors~$x$, indexed by~$i$, such that $Mx+b\ge0$ entrywise.  This is
an integer linear programming problem;  
we use an integer linear programming module that accepts input $M$, $b$, and $s \in \Zpos$ 
and guarantees the output of all integral~$x$ such that $Mx+b \ge 0$, 
 if the number of these solutions~$x$ is less than~$s$.  
Each solution~$x$ determines a
candidate truncation $\tilde\psi=\tilde\psi_o+\sum_{i=1}^d x_ih_{o,i}$
of a Jacobi form $\psi\in\JzeroNwh$ such that $\BL(\psi)$ lies in~$\MkKN$.
If step~4 has produced only truncations~$\tilde g_\phi$ of~$g_\phi$
that are multiples of~$b(\zeta)$, hence of~$\phi$,
in the ring of holomorphic functions, then the integer linear programming
problem $Mx+b\ge0$ has only finitely many solutions by
Theorem~\ref{finitelymanyBPthm}.  But if step~4 has produced some
truncations~$\tilde g_\phi$ such that $g_\phi$ isn't divisible
by~$b(\zeta)$ then the problem $Mx+b\ge0$ could have infinitely many
solutions.  Thus, solve the problem, seeking at most~$s$ solutions,
where $s$ is some large value, to ensure that the process terminates;
if fewer than~$s$ solutions are returned then they are all the solutions.

\smallskip

\noindent{\bf Step~9.}  For each candidate~$\tilde\psi$, try to find  
an actual weakly holomorphic Jacobi form $\psi\in\JzeroNwh$ that
truncates to~$\tilde\psi$.  We have three computational methods to do
so, as described in section~\ref{subsecAlgo}; alternatively, if
$\nextra$ is large enough for Theorem~\ref{divboundcor} to apply
then the existence of~$\psi$ is guaranteed by Lemma~\ref{newlemmafour} and we may skip confirming it.
If we find such a~$\psi$, or if we know that it exists, then we have a
Borcherds product of the desired $(c,t)$ type, whose leading theta block
is~$\phi$.  We may check its cuspidality using
Theorem~\ref{cuspinesstheorem} if $\nextra$ is large enough.

If step~8 found fewer than $s$ solutions of the integer linear
programming problem $Mx+b\ge0$, then it has not missed any candidates
and the algorithm has performed correctly for the current basic theta block.
Proceed to step~10.

If step~8 found $s$ solutions of the problem $Mx+b\ge0$, and no $\psi$
exists for some candidate~$\tilde\psi$, then step~4 has produced a
truncation of a non-multiple of~$b(\zeta)$, and this may have created
infinitely many solutions of the problem.  Abort, and rerun the
algorithm with a larger value of~$\nextra$.

The remaining case is that step~8 found $s$ solutions of the
problem $Mx+b\ge0$, and some $\psi$ exists for each candidate~$\tilde\psi$.
In this case, we don't know whether step~4 has produced a truncation
of a non-multiple of~$b(\zeta)$, nor whether step~9 has found all
solutions of the problem.  Increase~$s$ and return to solving the
problem in step~8 with this larger cap on the number of solutions.
Eventually the process will land us in one of the other two cases of
this step: the problem has finitely many solutions, or a
candidate~$\tilde\psi$ has no~$\psi$.  Either way, the algorithm moves on.

\smallskip

\noindent{\bf Step~10.} If any basic theta blocks $\phi\in\JkBPcNcusp$
remain for the algorithm then return to step~$3$.  Otherwise terminate.

\subsection{Implementation issues\label{sectionImpl}}

This section briefly discusses implementation aspects of three parts
of the algorithm: Jacobi cusp form bases, division, and saturating an
integral basis.

\medskip

For Jacobi cusp form bases, a premise of the algorithm is that we have
determining truncations of $\Jcusp k{(\BPc+1)N}$ basis elements whose
coefficients are integral Laurent polynomials, the basis elements being
$$
g(\tau,z)=\sum_{n\ge1}g_n(\zeta)q^n,\quad
g_n(\zeta)\in\LaurentZ\text{ for all~$n$}.
$$
We produce truncations of such elements by working with basic theta blocks
without denominator in $\Jcusp k{(\BPc+1)N}$, and with basic theta blocks
without denominator in spaces $\Jcusp k{d(\BPc+1)N}$ followed by an
index-lowering Hecke operator $W_d$ \cite{ks89} that takes them into
$\Jcusp k{(\BPc+1)N}$.  We created such bases on demand rather than
building a systematic database of bases, because making such a basis
can be expensive.

\medskip

We turn to division.
Let $R$ be an integral domain.  The units of the Laurent series ring
$L=R[q\inv][[q]]$ are the Laurent series $b(q)=q^\beta\sum_{n\ge0}b_nq^n$
with $b_0$ a unit in~$R$.  Given any nonzero $a(q)\in L$ and any
invertible $b(q)\in L$, we can compute any specified number $\nmax+1$
terms of the quotient $a(q)/b(q)$ by truncating $a(q)$ and~$b(q)$ to
that many terms and carrying out the corresponding Laurent polynomial
division.  That is, writing $a(q)=q^\alpha\sum_{n\ge0}a_nq^n$, the
quotient $c(q)=a(q)/b(q)$ has leading term $q^{\alpha-\beta}$, and its
coefficients are determined in succession by the relations
$$
a_n=b_nc_0+b_{n-1}c_1+\cdots+b_1c_{n-1}+b_0c_n,\quad n=0,1,\dotsc,
$$
and determining $c_0,\dotsc,c_\nmax$ requires only
$a_0,\dotsc,a_\nmax$ and $b_0,\dotsc,b_\nmax$.

For example, our algorithm divides elements of $\LaurentZ[q]$, all of
whose coefficient functions are known to be divisible by a baby theta
block~$b(\zeta)$,
$$
\tilde g(\tau,z)=q^\BPc\sum_{n=0}^\nmax g_{\BPc+n}(\zeta)q^n,\quad
b(\zeta)\mid g_{\BPc+n}(\zeta)\text{ in }\LaurentZ
\text{ for $n=0,\dotsc,\nmax$}
$$
by a truncation of a basic theta block having the specified baby theta block,
$$
\tilde\phi(\tau,z)
=q^{\BPc+t}b(\zeta)\sum_{n=0}^\nmax\phi_{\BPc+t+n}(\zeta)q^n,
\quad\phi_{\BPc+t}(\zeta)=1,
$$
to get an element of $q^{-t}\LaurentZ[q]$,
$$
h(\tau,z)=\sum_{n=-t}^{\nmax-t}h_n(\zeta)q^n.
$$
To carry out such a division, first divide every coefficient function
$g_n(\zeta)$ by~$b(\zeta)$, confirming that the remainders are~$0$;
the cost of this check is insignificant in comparison to other parts
of our computations.  From here the division is carried out as just above.

The process of dividing a basis of $\Jcusp{12i}N$ by a power~$\Delta^i$
of the discriminant function, as in section~\ref{sectionConf}, is similar.

\medskip

We discuss saturating an integral basis.
Let $n$ be a positive integer, and let the vectors
$v_1,\dotsc,v_d$ in~$\Z^n$ be linearly independent over~$\Z$ and hence
over~$\Q$.
The vector space $V=\bigoplus_{i=1}^d\Q v_i$ contains the
integer lattice $\bigoplus_{i=1}^d\Z v_i$, but this integer lattice
need not be all of the so-called saturated lattice $V(\Z)=V\cap\Z^n$.
To compute an integral basis of~$V$ whose $\Z$-span is all of~$V(\Z)$,
proceed as follows.
\begin{itemize}
\item Let $M$ be the $d\times n$ integer matrix having rows~$v_i$.
\item Let $A\in\GL d\Z$ and $B\in\GL n\Z$ be such that the matrix
  $M_o=AMB$ has integer diagonal entries and all other entries~$0$.
  For example, $M_o$ could be the Smith normal form of~$M$, but we do
  not need the Smith normal form condition that the diagonal entries
  are the elementary divisors that describe the structure
  of~$\bigoplus_i\Z v_i$ as a subgroup of~$\Z^n$.
  Alternatively, $A$ and~$B$ and~$M_o$ can be obtained by repeatedly
  left-multiplying $M$ into Hermite normal form and then transposing
  it, until it is diagonal.
\item Let $w_1,\dotsc,w_d$ denote the first $d$ rows of~$B\inv$.
  These vectors represent the desired basis, \ie,
  $V=\bigoplus_{i=1}^d\Q w_i$ and $V(\Z)=\bigoplus_{i=1}^d\Z w_i$.
\end{itemize}

\section{Examples\label{sectionExam}}

This section gives three more examples of using our algorithm.
To find all paramodular cusp form Borcherds products
for a given weight~$k$ and level~$N$, we need to determine all pairs
$(c,t)$ for which $\Jcusp k{cN}$ can contain basic theta blocks having
lowest $q$-power~$q^{c+t}$.
For $N\le5$, the conditions given at the end of section~\ref{sectionFind}
constrain the possible pairs~$(c,t)$ to a finite quadrilateral.
For squarefree~$N$ we can use an integral closure argument
\cite{py15,psysqfree} to get an upper bound of possible $c$-values,
and for general~$N$ can use the Fourier coefficient bound from \cite{bpy16} to do the
same, and then we get an upper bound of~$t$ for each~$c$ by analyzing
a Jacobi form basis.
These methods determined the pairs $(c,t)$ in the three examples to
follow.

\subsection{Weight~$\boldsymbol{2}$, level~$\boldsymbol{249}$}
This example arises from the para\-modular conjecture.
The space $\SwtKlevel2{249}$ has $6$ dimensions, spanned by Fricke
plus forms, while $\Jcusp2{249}$ is $5$-dimensional and so there is
one nonlift dimension.  The only element of $\SwtKlevel2{249}$
divisible by~$\xi^{249\cdot3}$ is~$0$ (cf.~\cite{psysqfree}), and so
$c\le2$ for all Borcherds products.
The only element of $\Jcusp2{249}$ whose first term $g_1(\zeta)q$
vanishes is~$0$, so every basic theta block in $\Jcusp2{249}$
has~$\BPA\le1$; also, the only element of $\Jcusp2{2\cdot249}$
whose terms $g_1(\zeta)q$ and~$g_2(\zeta)q^2$ both vanish is~$0$,
so every basic theta block in $\Jcusp2{2\cdot249}$ has~$\BPA\le2$.
Thus the only possible leading theta blocks of a Borcherds product in
$\MwtKlevel2{249}$ arise from $(\BPc,t)=(1,0),(2,0)$.
Figure~\ref{tbtable} gives the resulting basic theta blocks, and it shows
how many Borcherds products result from each.
In this figure and in the two figures to follow, an initial entry
$(n_1,n_2)$ in the cell at row~$\BPc$, column~$t$ gives the numbers of basic
theta blocks without denominator and properly with denominator
in $\Jcusp k{\BPc N}$ having leading $q$-power $q^{\BPc+t}$.
The cell then lists the relevant basic theta blocks, with the symbol
$d_1^{e_1}d_2^{e_2}\cdots$ denoting the basic theta block
$\phi=\eta^{2k}\prod_i(\vartheta_{d_i}/\eta)^{e_i}$,
and each basic theta block is followed by the number of Borcherds product
paramodular cusp forms that it gave rise to, out of the total number
of Borcherds product paramodular forms that it gave rise to if there were
noncusp forms as well, and then the $i$-values for locating the source
form~$\psi$ in $\Jwh{12i}N/\Delta^i$.
In Figure~\ref{tbtable}, the Borcherds product arising from the basic theta
block with denominator for $c=1$ and the Borcherds product arising for
$c=2$ are nonlifts, and in the six $c=1$ cases where a basic theta
block gives rise to two Borcherds products, one of them is a nonlift; the
other ten Borcherds products in the figure are Gritsenko lifts.
See \cite{yps17} for detailed descriptions of the Borcherds products
referenced in the three figures of this section.

\begin{figure}[htb]
$$
\begin{array}{|c||l|}
\hline
\rule{0pt}{1.75em}
\begin{aligned}
&\phi\in\Jcusp2{c\cdot249}\\
&q^{c+t}\,\|\,\phi
\end{aligned}
& \ t=0 \\
\hline\hline
\rule{0pt}{9.7em}
\BPc=1 &
\begin{aligned}
&(10,1)\to17\mathcal S\\
&1^1 2^1 3^1 5^1 6^1 7^2 9^1 10^1 12^1:2\mathcal S,i=1\,1\\
&1^1 3^2 5^1 6^3 9^1 11^1 12^1:2\mathcal S,i=1\,1\\
&2^2 3^1 5^2 6^1 7^1 9^1 11^1 12^1:2\mathcal S,i=1\,1\\
&1^2 3^1 4^1 5^1 6^1 8^1 9^1 11^1 12^1:2\mathcal S,i=1\,1\\
&1^1 2^1 3^1 4^1 5^2 7^1 9^1 12^2:1\mathcal S,i=1\\
&2^1 3^2 4^1 5^1 6^1 7^1 9^1 10^1 13^1:2\mathcal S,i=1\,1\\
&1^2 3^1 4^2 5^1 6^1 9^1 12^1 13^1:2\mathcal S,i=1\,1\\
&1^1 2^1 3^2 4^1 5^1 6^1 9^1 11^1 14^1:1\mathcal S,i=1\\
&1^1 2^2 3^1 5^1 6^1 7^1 8^1 9^1 15^1:1\mathcal S,i=1\\
&1^2 2^1 3^1 4^1 5^1 6^1 9^1 10^1 15^1:1\mathcal S,i=1\\
&1^1 2^1 3^2 4^{-1} 5^1 6^1 8^2 9^1 10^1 11^1:1\mathcal S,i=1
\end{aligned}
\\
\hline
\rule{0pt}{1.75em}
\BPc=2 &
\begin{aligned}
&(1,0)\to1\mathcal S\\
&1^3 2^2 3^2 4^2 5^2 6^3 7^2 8^1 9^1 10^1 11^1 12^1 13^1:1\mathcal S,i=1
\end{aligned}\\
\hline
\end{array}
$$
\caption{Basic theta blocks and cusp Borcherds products: weight~$2$,
  level~$249$\label{tbtable}}
\end{figure}

\subsection{Weight~$\boldsymbol{9}$, level~$\boldsymbol{16}$}
This example arose in searching for paramodular forms whose
automorphic representations have supercuspidal components \cite{psysuper}.
The space $\SwtKlevel9{16}$ has $16$ dimensions, with $15$ spanned by
symmetric forms and $1$ by an antisymmetric form.  Because the weight
is odd, the symmetric forms are Fricke minus forms, and 
the antisymmetric form is a Fricke plus form.  
The only possible leading theta blocks of Borcherds products
in $\SwtKlevel9{16}$ are as shown in Figure~\ref{tbtable2}.
Confirming all but one of the candidate truncations $\tilde\psi$
required only $i=1$, using the ``subtraction trick'' described in
section~\ref{subsecAlgo} for the lone $(\BPc,t)=(1,1)$ case.
The truncations in $\Jwh9{(c+1)16}$ were taken to~$q^{16/4+c+t}$, with
$\nextra=0$; there was no need for longer truncations to make the
algorithm run successfully.
The rank of the space spanned by the $14$ symmetric cusp Borcherds
products generated as described in the first column of the table is~$9$.
We know that the table gives all the cusp Borcherds products for this
weight and level because Jacobi restriction (out to the rigorous bound
of $18$ Jacobi coefficients---see Table~3 in \cite{psysuper}), shows
that any element $\sum_{m\ge4}\phi_m(\tau,z)\xi^{m\cdot16}$ of
$\SwtKlevel9{16}$ is~$0$;
then an analysis of the bases of $\Jcusp9{c\cdot16}$ for $c=1,2,3$
finds only the basic theta blocks shown in the table.

\begin{figure}[htb]
$$
\begin{array}{|c||l|l|}
\hline
\begin{aligned}
\rule{0pt}{1.0em}
&\phi\in\Jcusp9{c\cdot16}\\
&q^{c+t}\,\|\,\phi
\end{aligned}
& \ t=0 & \ t=1 \\
\hline\hline
\rule{0pt}{2.75em}
\BPc=1 &
\begin{aligned}
&(0,2)\to7\mathcal S/10\mathcal M\\
&1^{-5} 2^7 3^1:3\mathcal S/6\mathcal M,i=1\,1\,2\,1\,1\,1\\
&1^{-1} 2^2 3^1 4^1:4\mathcal S,i=1\,1\,1\,1\\
\end{aligned}
&
\begin{aligned}
&(1,0)\to1\mathcal S\\
&1^{11} 2^3 3^1:1\mathcal S,i=2\to1\\
&\phantom{1^{-1} 2^2 3^1 4^1}\\
\end{aligned}
\\
\hline
\rule{0pt}{5.80em}
\BPc=2 &
\begin{aligned}
&(6,0)\to6\mathcal S/8\mathcal M\\
&1^2 2^{11} 3^2:1\mathcal S,i=1\\
&1^7 2^3 3^5:1\mathcal S/2\mathcal M,i=1\,1\\
&1^6 2^6 3^2 4^1:2\mathcal S,i=1\,1\\
&1^{10}2^1 3^2 4^2:1\mathcal S,i=1\\
&1^9 2^3 3^2 5^1:1\mathcal S/2\mathcal M,i=1\,1\\
&1^{11} 2^2 3^1 6^1:\emptyset\\
\end{aligned}
&
\begin{aligned}
&(1,0)\to\emptyset\\
&1^{18} 2^7 3^2:\emptyset\\
&\phantom{1^7 2^3 3^5:1\mathcal S/}\\
&\phantom{1^6 2^6 3^2 4^1:2\mathcal S}\\
&\phantom{1^{10}2^1 3^2 4^2:1\mathcal S}\\
&\phantom{1^9 2^3 3^2 5^1:1\mathcal S}\\
&\phantom{1^{11} 2^2 3^1 6^1:0}\\
\end{aligned}\\
\hline
\rule{0pt}{5.0em}
\BPc=3 &
\begin{aligned}
&(5,0)\to1\mathcal S\\
&1^{13} 2^{10} 3^3 4^1:1\mathcal S,i=1\\
&1^{14} 2^7 3^6:\emptyset\\
&1^{17} 2^5 3^3 4^2:\emptyset\\
&1^{16} 2^7 3^3 5^1:\emptyset\\
&1^{18}2^63^26^1:\emptyset\\
\end{aligned}\\
\cline{1-2}
\end{array}
$$
\caption{Basic theta blocks and cusp Borcherds products: weight~$9$,
  level~$16$\label{tbtable2}}
\end{figure}

\subsection{Weight $\boldsymbol{46}$, level $\boldsymbol{4}$\label{kN=464}}
Here we focus on the case $(c,t)=(1,3)$, to illustrate offset $t=3$.
Symmetric and antisymmetric Borcherds product both arise for this
$(c,t)$, in fact arising from the same basic theta block.
Indeed, there is only one basic theta block in $\Jcusp k{cN}=\Jcusp{46}4$
with leading $q$-power~$q^{c+t}=q^4$, and it is
$\phi=\eta^{92}(\vartheta_2/\eta)^2$.

Let $b=b(\zeta)$ denote the baby theta block of $\phi=\phi(\tau,z)$.
Experimentation shows that possibly finding the dimension of
multiples of~$b$ in $\Jcusp{k}{(c+1)N}=\Jcusp{46}8$ requires
$\nextra=5$, thus taking initial expansions to~$q^{N/4+c+t+\nextra}=q^{10}$.
Indeed, for expansions to~$q^{10}$, the algorithm posits $14$
dimensions of multiples of~$b$, whereas for expansions to~$q^9$ it
incorrectly posits~$15$; the~$15$ is recognizably incorrect because
it connotes a resulting $15$-dimensional subspace of $\Jwh04$ that
violates Theorem~\ref{wt0integralitythm} because the singular
coefficients have rank only~$14$.
The prognosis of $14$ dimensions by using expansions to~$q^{10}$ is not
guaranteed to be correct, but running the algorithm with such
expansions produced only correct candidates~$\tilde\psi$ at the end.
In contrast with~$q^{10}$, Theorem~\ref{divboundcor} guarantees 
divisibility by the baby theta block after initial expansions to~$q^{23}$, as noted above
after the theorem's proof.
The basic theta block~$\phi$ gives rise to five Borcherds products, one
symmetric, \ie, in $\SwtKlevel{46}4^+$, and the other four
antisymmetric, \ie, in $\SwtKlevel{46}4^-$.  Confirming the relevant
truncations~$\tilde\psi$ found~$\psi$ in $\Jcusp{24}4/\Delta^2$
for three of the five Borcherds products and in $\Jcusp{36}4/\Delta^3$
for the other two.

Extending the computation to determine by Theorem~\ref{cuspinesstheorem} 
that all five Borcherds products are cuspidal required a much higher
value of~$\nextra$.  Indeed, the theorem with $k=46$ and~$N=4$ gives
$(m,\ell,\delta,\widetilde I,\nmax)$-tuples $(1,1,4,1,3)$,
$(2,2,1,3,11)$, and $(4,1,1,1,3)$.
The three resulting Fourier coefficient indices
$\nmax\delta\smallmat{\phantom{-}1}{-m}{-m}{\phantom{-}m^2}$ are
$\smallmat{\phantom{-}12}{-12}{-12}{\phantom{-}3\cdot4}$,
$\smallmat{\phantom{-}11}{-22}{-22}{\phantom{-}11\cdot4}$, and
$\smallmat{\phantom{-}3}{-12}{-12}{\phantom{-}12\cdot4}$.
For the four antisymmetric Borcherds products in this example, the
involution conditions (section~\ref{sectionSymm}) say that the second
family of indices,
$n\smallmat{\phantom{-}1}{-2}{-2}{\phantom{-}4}$ for $n\le11$,
indexes Fourier coefficients that are zero.
The involution conditions also say that checking the Fourier
coefficients having indices
$4n\smallmat{\phantom{-}1}{-1}{-1}{\phantom{-}1}$ with~$n\le3$
subsumes checking the Fourier coefficients having indices
$n\smallmat{\phantom{-}1}{-4}{-4}{\phantom{-}16}$ with~$n\le3$.
Thus we may expand only to the third Jacobi coefficient, and this
requires only an expansion of $\psi|V_2$.
This method of checking that the third family of Fourier coefficients
vanishes by checking the first family applies to symmetric forms as well.
For the one symmetric Borcherds product in this example, the second
family of indices shows that we need expansions of~$\psi|V_{10}$ that
subsume $11$ Fourier--Jacobi coefficients.
Checking the symmetric product succeeds for $\nextra=90$ and hence initial
$\Jcusp{46}8$ expansions to~$q^{95}$.  Because $95>23$, this
computation could cite Lemma~\ref{newlemmafour} and 
Theorem~\ref{divboundcor} to skip confirming the truncations~$\tilde\psi$, although things 
were not done in that order.  
This computation again reports the $14$-dimensional subspace of
$\Jwh04$ that is divisible by the baby theta block, and now we
know that this is correct because the divisibility is guaranteed.

The algorithm's process of generating the rest of the Borcherds products
of weight~$46$ and level~$4$ is summarized in Figure~\ref{tbtable3}.
The pairs $(c,t)$ where basic theta blocks relevant to this weight and level
could exist satisfy the discrete quadrilateral bounds from the end
of section~\ref{sectionFind}, $c\ge1$ and
$\max\lset0,46/12-c\rset\le t\le(46-4c)/12$.
Because all the Borcherds products found by the algorithm here are
cuspidal, Figure~\ref{tbtable3} doesn't mention the cuspidality after
each basic theta block.

As this example shows, a high value of~$\nextra$ can be required to
determine whether the Borcherds products found by the algorithm are
cuspidal.  Generally, a high weight~$k$ and/or square factors in the
level~$N$ drive up the necessary $\nextra$.
We have methods to predict the needed~$\nextra$ accurately,
by tracking the leading and trailing exponents of~$\psi$ under the
infinite series Borcherds product formula in Theorem~\ref{BPthm}.
Especially, these methods produced the value $\nextra=90$ in the
penultimate paragraph.

\begin{figure}[htb]
$$
\begin{array}{|c||l|l|l|l|}
\hline
\begin{aligned}
\rule{0pt}{1.0em}
&\phi\in\Jcusp{46}{c\cdot4}\\
&q^{c+t}\,\|\,\phi
\end{aligned}
& \qquad t=0 & \qquad t=1 & \quad t=2 & \quad t=3 \\
\hline\hline
\rule{0pt}{3.4em}
\BPc=1 & \multicolumn{3}{c}{}\vline &
\begin{aligned}
&(1,0)\to5\mathcal S\\
&\quad1\text{sym}\,4\text{anti}\\
&2^2:\\
&\quad i=\underset s3\,\underset a3\,\underset a2\,\underset
  a2\,\underset a2\\
\end{aligned}
\\
\cline{1-1}\cline{4-5}
\rule{0pt}{2.5em}
\BPc=2 & \multicolumn{2}{c}{}\vline &
\begin{aligned}
&(0,1)\to4\mathcal S\\
&1^{-1} 2^2 3^1:\\
&\quad i=3\,2\,2\,2\\
\end{aligned}
&
\\
\cline{1-1}\cline{3-5}
\rule{0pt}{1.9em}
\BPc=3 & &
\begin{aligned}
&(0,1)\to1\mathcal S\\
&1^{-2} 2^2 3^2:i=2\\
\end{aligned}
&
\\
\cline{1-4}
\rule{0pt}{5.75em}
\BPc=4 &
\begin{aligned}
&(1,5)\to4\mathcal S\\
&1^{-8} 2^{10}:i=1\\
&1^{-3} 2^2 3^3:i=1\\
&1^{-4} 2^5 4^1:i=2\\
&4^2:i=2\\
&1^{-1} 2^2 5^1:\emptyset\\
&1^1 2^1 3^{-1} 6^1:\emptyset\\
\end{aligned}
&
\begin{aligned}
&(2,0)\to2\mathcal S\\
&1^8 2^6:i=2\\
&1^{12} 2^1 4^1:i=2\\
&\phantom{1^{-4} 2^5 4^1}\\
&\phantom{4^2}\\
&\phantom{1^{-1} 2^2 5^1}\\
&\phantom{1^1 2^1 3^{-1} 6^1}\\
\end{aligned}
&
\begin{aligned}
&(1,0)\to4\mathcal S\\
&1^{24} 2^2:\\
&\quad i=3\,2\,2\,2\\
&\phantom{1^{-4} 2^5 4^1}\\
&\phantom{4^2}\\
&\phantom{1^{-1} 2^2 5^1}\\
&\phantom{1^1 2^1 3^{-1} 6^1}\\
\end{aligned}\\
\cline{1-4}
\rule{0pt}{2.7em}
\BPc=5 &
\begin{aligned}
&(2,0)\to2\mathcal S\\
&1^7 2^6 3^1:i=1\\
&1^{11} 2^1 3^1 4^1:i=2\\
\end{aligned}
&
\begin{aligned}
&(1,0)\to1\mathcal S\\
&1^{23} 2^2 3^1:i=2\\
&\phantom{1^{11} 2^1 3^1 4^1}\\
\end{aligned}
&
\\
\cline{1-4}
\rule{0pt}{1.9em}
\BPc=6 &
\begin{aligned}
&(1,0)\to1\mathcal S\\
&1^{22} 2^2 3^2:i=1\\
\end{aligned}
&
\\
\cline{1-3}
\rule{0pt}{2.7em}
\BPc=7 &
\begin{aligned}
&(2,0)\to2\mathcal S\\
&1^{32} 2^6:i=1\\
&1^{36} 2^1 4^1:i=2\\
\end{aligned}
&
\begin{aligned}
&(1,0)\to1\mathcal S\\
&1^{48} 2^2:i=2\\
&\phantom{1^{36} 2^1 4^1}\\
\end{aligned}\\
\cline{1-3}
\rule{0pt}{1.9em}
\BPc=8 &
\begin{aligned}
&(1,0)\to1\mathcal S\\
&1^{47} 2^2 3^1:i=1\\
\end{aligned}
&
\\
\cline{1-3}
\BPc=9 &
\\
\cline{1-2}
\rule{0pt}{1.9em}
\BPc=10 &
\begin{aligned}
&(1,0)\to1\mathcal S\\
&1^{72} 2^2:i=1\\
\end{aligned}
\\
\cline{1-2}
\BPc=11 &
\\
\cline{1-2}
\end{array}
$$
\caption{Basic theta blocks and cusp Borcherds products: weight~$46$,
  level~$4$\label{tbtable3}}
\end{figure}

To find all Borcherds product paramodular cusp forms with
specified $(k,N,c,t)$, our program is essentially automated, with
various features possible to activate or not.
To determine all possible pairs $(c,t)$ for a given $(k,N)$ still
requires informed human decisions and other programs.

\bibliographystyle{plain}
\bibliography{allbp}

\end{document}